\documentclass[11pt]{amsart}
\usepackage{amsfonts, amssymb, amsmath, mathrsfs,mathtools}
\usepackage{stmaryrd}
\usepackage[all]{xy}
\usepackage{tikz-cd}
\usepackage[pdftex, colorlinks=false, pdfborder={0 0 0}]{hyperref}
\hypersetup{
	pdftitle={Crystals and the parabolicity conjecture},
	pdfauthor={Marco D'Addezio},
	pdfsubject={Course notes on crystals, F-isocrystals, and parabolicity},
	pdfkeywords={crystals, p-divisible groups, F-isocrystals, slope filtration, monodromy, Hecke orbits}
}
\usepackage{url}
\newcommand{\spa}{\medskip}

\tikzset{
	labl/.style={anchor=south, rotate=90, inner sep=.5mm}
}

\usepackage[utf8]{inputenc}

\usepackage{bbm}

\usepackage{relsize}
\usepackage[bbgreekl]{mathbbol}
\DeclareSymbolFontAlphabet{\mathbb}{AMSb}
\DeclareSymbolFontAlphabet{\mathbbl}{bbold}

\newcommand{\F}{\mathbb{F}}

\newcommand{\Z}{\mathbb{Z}}

\newcommand{\spec}{\mathrm{Spec}}

\newcommand{\Foi}{\Fisoc^\mathrm{\dag}}

\newcommand{\iso}{\xrightarrow{\sim}}

\newcommand{\Zl}{\mathbb{Z}_{\ell}}
\newcommand{\Ql}{\mathbb{Q}_{\ell}}

\newcommand{\Qp}{\mathbb{Q}_p}
\newcommand{\Qlbar}{\overline{\mathbb{Q}}_{\ell}}

\newcommand{\kbar}{{\bar{k}}}

\newcommand{\Zp}{{\mathbb{Z}_p}}

\newcommand{\mr}{\mathrm}

\newcommand{\Fp}{\F_p}

\newcommand{\calA}{\mathcal{A}}

\newcommand{\calE}{\mathcal{E}}

\newcommand{\calF}{\mathcal{F}}

\newcommand{\calH}{\mathcal{H}}

\newcommand{\calO}{\mathcal{O}}

\newcommand{\calL}{\mathcal{L}}

\newcommand{\calM}{\mathcal{M}}

\newcommand{\calN}{\mathcal{N}}

\newcommand{\calX}{\mathcal{X}}

\newcommand{\ovfp}{\overline{\mathbb{F}}_p}

\newcommand{\Spec}{\mathrm{Spec}}

\newcommand{\Gal}{\mathrm{Gal}}
\newcommand{\Hom}{\mathrm{Hom}}

\newcommand{\Ker}{\mathrm{Ker}}

\newcommand{\GL}{\mathrm{GL}}

\newcommand{\Fisoc}{\mathrm{F\textrm{-}Isoc}}

\newcommand{\Spf}{{\mathrm{Spf}}}

\newcommand{\stack}[1]{\cite[\href{https://stacks.math.columbia.edu/tag/#1}{Tag #1}]{Stacks}}

\usepackage{epigraph}

\usepackage{etoolbox}

\makeatletter
\patchcmd{\epigraph}{\@epitext{#1}}{\itshape\@epitext{#1}}{}{}
\makeatother

\begin{document}
	\bibliographystyle{alpha}
	\newtheorem{theo}{Theorem}[subsection]
	\newtheorem{hope}[theo]{Hope}
	\newtheorem{prop}[theo]{Proposition}
	\newtheorem{lemm}[theo]{Lemma}
	\newtheorem{coro}[theo]{Corollary}
	\newtheorem{claim}[theo]{Claim}
	\newtheorem{fact}[theo]{Fact}
	\newtheorem*{conj*}{Conjecture}
	\newtheorem{conj}[theo]{Conjecture}
	
	\theoremstyle{definition}
	\newtheorem{defi}[theo]{Definition}
	\newtheorem{ques}[theo]{Question}
	\newtheorem{prob}[theo]{Problem}

	\newtheorem{nota}[theo]{Notation}
	\newtheorem*{theo*}{Theorem}
	\newtheorem*{ques*}{Question}
	\newtheorem{property}[theo]{Property}
	\newtheorem{hypo}[theo]{Hypothesis}
	
	\newtheorem{answer}[theo]{Answer}
	\newtheorem{rema}[theo]{Remark}
	\newtheorem{exam}[theo]{Example}
	\newtheorem{warning}[theo]{Warning}
	\newtheorem{notation}[theo]{Notation}
	\newtheorem{cons}[theo]{Construction}
	
	\numberwithin{equation}{section}
	
	\title{Crystals and the parabolicity conjecture}
		
	\author{Marco D'Addezio}
	\address{IRMA, Université de Strasbourg, 67000 Strasbourg, France}
	\email{daddezio@unistra.fr}
	\date{October 11, 2026}
	\thanks{Course notes for the workshop ``A Sprint on Local Systems'', Wuppertal 2026, website: \url{https://sites.google.com/view/workshop-ls2026}}
	\begin{abstract}
		These notes introduce crystals and crystalline Dieudonné theory through the example of abelian varieties. After discussing $p$-divisible groups, $F$-isocrystals, slope filtrations, and overconvergence, we explain the parabolicity theorem for $F$-isocrystals and some of its applications to monodromy and Hecke orbits. Two appendices formulate research projects on isocrystals over formal schemes and on monodromy beyond the constant-slope setting. The exposition is intended for readers familiar with basic algebraic geometry and the étale and fppf topologies; prior knowledge of Witt vectors is helpful but not required.
	\end{abstract}
\subjclass[2020]{Primary 14F30; Secondary 14L05, 14K10, 14-01.}	\keywords{crystal, $p$-divisible group, $F$-isocrystal, slope filtration, monodromy, Hecke orbit}
	\maketitle
	
	\tableofcontents

	\section*{Introduction}

	These notes grew out of lectures at the workshop ``A Sprint on Local Systems'', Wuppertal 2026. Their aim is to give a concrete route from the de Rham cohomology of elliptic curves to crystalline Dieudonné theory and, ultimately, to the parabolicity theorem for $F$-isocrystals. The emphasis is on examples and geometric intuition; several proofs are presented only as sketches or exercises, with references to the literature for complete arguments.

	\spa

	Lecture~1 introduces crystals and their description by flat connections. Lecture~2 discusses $p$-divisible groups, $F$-isocrystals, slope filtrations, overconvergence, and monodromy. Lecture~3 states the parabolicity theorem, sketches its proof through $\dagger$-hulls, and describes applications to local monodromy and Hecke orbits. The appendices collect two research projects.

	\section{Crystals}
	\subsection{Elliptic curves and Dieudonné modules}

Let $k$ be a field with algebraic closure $\kbar$ and let $E$ be an elliptic curve over $k$. The set of $\kbar$-points of $E$, denoted by $E(\kbar)$, is endowed with a commutative group structure. For a prime number $\ell$ different from the characteristic of $k$ and for every $n\geq 0$ there exists a non-canonical isomorphism  $$E(\kbar)[\ell^n]\simeq\Z/\ell^{n}\oplus \Z/\ell^{n}.$$ This agrees with the situation over the complex numbers, where elliptic curves can be written as quotients $\mathbb{C}/\Lambda$, where $\Lambda$ is a lattice of $\mathbb{C}$. In particular, note that $E(\kbar)[\ell^n]$, as an abstract group, does not depend on the choice of $E$ and it has the same structure when $\ell$ varies. The projective limit induced by the multiplication-by-$\ell$ maps $$T_\ell(E)\coloneqq \varprojlim_n E(\kbar)[\ell^n]$$ is the \textit{$\ell$-adic Tate module} of $E$. It is non-canonically isomorphic to $\Zl\oplus \Zl$ and it behaves like a homology group of the curve. It is in fact the dual of the first $\ell$-adic cohomology group of $E_\kbar$. The absolute Galois group $\Gal(\kbar/k)$ acts on $T_\ell(E)$, giving rise to an $\ell$-adic representation $$\rho_{E,\ell}\colon \Gal(\kbar/k)\to \GL(T_\ell(E))\simeq \GL_2(\Zl).$$ These representations are fundamental objects in number theory.

\spa

A different story happens for $E(\kbar)[p^n]$ when $k$ is of characteristic $p$. In this case, the group is a free $\Z/p^n$-module of rank $1$ or $0$, depending on $E$. This rank, which is independent of $n$, is the \textit{$p$-rank} of $E$. The set of all elliptic curves over $k$ then splits into two classes. The ones of $p$-rank $1$ are called \textit{ordinary}, while the ones of $p$-rank $0$ are called \textit{supersingular}.\footnote{This term is not related to the notion of singularity of an algebraic variety; elliptic curves are always smooth by definition.} 

\spa

To better understand this phenomenon, it is very convenient to look at the entire group schemes $$E[p^n]\coloneqq \mathrm{Ker}(E\xrightarrow{\cdot p^n} E).$$
These are finite group schemes over $k$ of order $p^{2n}$, matching the $\ell$-primary setting. This order is the same for ordinary and supersingular elliptic curves. These group schemes, unlike the $\ell$-primary situation, are not reduced. One of the aims of this course is to convince you that this is not just an annoying pathology, but it is rather an extra feature that gives more information about the elliptic curve than the $\ell$-primary torsion. In particular, we will see that these group schemes are the right objects to look at to study the deformations of $E$.

\spa
Let us dive more into the structure of $E[p^n]$. The multiplication-by-$p$ map on $E$ is faithfully flat and it decomposes as $p=VF$, where $F\colon E\to E^{(p)}$ is the {relative Frobenius} and $V\colon E^{(p)}\to E$ is the Verschiebung. We have an exact sequence 
 $$0\to E^{(p)}[V]\to E^{(p)}[p]\xrightarrow{V} E[F]\to 0.$$
The Frobenius is purely inseparable, which implies that $E[F^n]$ is a connected group scheme of order $p^n$ (with no non-trivial $\kbar$-points). It follows that the $p$-rank of $E$ is the same as the rank of $E^{(p)}[V](\kbar)$ over $\Z/p$. In turn, we deduce that $E$ is supersingular if and only if $V$ is purely inseparable. Since $V$ is of degree $p$, it is enough to check whether it vanishes at the tangent space of the identity. In turn, since $E$ is parallelisable, we have just proved that $E$ is supersingular if and only if  $$V^*\colon H^0(E,\Omega^1_{E/k})\to H^0(E,\Omega^1_{E/k})^{(p)}$$ vanishes. It is then natural to look at the behaviour of $F$ and $V$ on the entire {de Rham cohomology} of $E$.


\subsubsection{De Rham cohomology}

We first recall the definition of algebraic de Rham cohomology.

\begin{defi}
Let $X\to S$ be a smooth morphism of schemes. The \textit{algebraic de Rham cohomology groups} of $X$ over $S$ are defined as the hypercohomology groups $$H^i_{\mathrm{dR}}(X/S)\coloneqq \mathbb{H}^i(X,\Omega^\bullet_{X/S})$$ where $\Omega^\bullet_{X/S}$ is the algebraic de Rham complex of $X$ over $S$.
\end{defi} 

\begin{rema}If $S$ is the spectrum of a field and $X$ is smooth and proper over $k$, these groups are finite dimensional $k$-vector spaces. In addition, if $k=\mathbb{C}$, Grothendieck proved that there is a canonical isomorphism $$H^i_{\mathrm{dR}}(X/\mathbb{C})= H^i_{\mathrm{sing}}(X(\mathbb{C}),\mathbb{C}).$$ In this case, the algebraic de Rham cohomology groups can be seen as algebraic incarnations of the singular cohomology groups.
\end{rema}

Coming back to our setting, as it happens over the complex numbers, the de Rham cohomology group $H^1_{\mathrm{dR}}(E/k)$ is a $2$-dimensional $k$-vector space. In addition, $H^1_{\mathrm{dR}}(E/k)$ admits the \textit{Hodge filtration}\footnote{Recall that in positive characteristic the Hodge-to-de Rham spectral sequence does not degenerate at the first page. For example, there exist surfaces such that the $E_1$-differential $d\colon H^0(X,\Omega^1_{X/k})\to H^0(X,\Omega^2_{X/k})$ does not vanish \cite{Mum61}. The spectral sequence does degenerate at the first page, for example, when $X$ is an abelian variety, a K3 surface, or a complete intersection \cite{Wed08}.} $$\mathrm{Fil}^1_\mathrm{Hdg}=H^0(E,\Omega^1_{E/k})\subseteq H^1_{\mathrm{dR}}(E/k)$$ with quotient $$H^1_{\mathrm{dR}}(E/k)/\mathrm{Fil}^1_\mathrm{Hdg}= H^1(E,\calO_{E}).$$ Both $H^0(E,\Omega^1_{E/k})$ and $H^1(E,\calO_{E})$ are $1$-dimensional $k$-vector spaces. The cohomology group $H^1_{\mathrm{dR}}(E/k)$ admits also the \textit{conjugate filtration,} which thanks to the \textit{Cartier isomorphism} $$C^{-1}\colon \Omega^i_{E^{(p)}/k}\xrightarrow{\sim} \calH^i(F_*\Omega^\bullet_{E/k})$$ can be written as $$\mathrm{Fil}^\mathrm{conj}_1=H^1(E,\calO_{E})^{(p)}\subseteq  H^1_{\mathrm{dR}}(E/k).$$ In this case, taking the quotient yields $$H^1_{\mathrm{dR}}(E/k)/\mathrm{Fil}^\mathrm{conj}_1=H^0(E,\Omega^1_{E/k})^{(p)}.$$
The Frobenius and Verschiebung induce maps $$F^*\colon H^1_{\mathrm{dR}}(E/k)^{(p)}\to H^1_{\mathrm{dR}}(E/k)\quad \textrm{and}\quad V^*\colon H^1_{\mathrm{dR}}(E/k)\to H^1_{\mathrm{dR}}(E/k)^{(p)}.$$ These maps respect the filtrations and are dual to each other via Poincaré duality. More precisely, pullback by the relative Frobenius induces the following exact sequence of complexes on $E^{(p)}$:
\[
0\longrightarrow \Omega^1_{E^{(p)}/k}[-1]
\longrightarrow \Omega^\bullet_{E^{(p)}/k}
\xrightarrow{F^*}\calH^0(F_*\Omega^\bullet_{E/k})[0]
\longrightarrow 0.
\]
Taking hypercohomology and using the Cartier isomorphism, we deduce that the Hodge and conjugate filtrations are related by the isomorphism
\begin{equation}\label{eq:FZip}
 H^1_{\mathrm{dR}}(E/k)^{(p)}/{(\mathrm{Fil}^1_\mathrm{Hdg})^{(p)}}\xrightarrow{\sim} \mathrm{Fil}^\mathrm{conj}_1\subseteq H^1_{\mathrm{dR}}(E/k).
\end{equation}

 We have the following more complete characterisation of the property of being supersingular.
\begin{prop}\label{Prop:Supersingular}
	Let $E$ be an elliptic curve over a field $k$ of characteristic $p$. The following properties are equivalent.
	\begin{enumerate}
		\item $E$ is supersingular.
	\spa
		\item The Verschiebung $V\colon E^{(p)}\to E$ is purely inseparable.
	\spa
		\item $V^*\colon  H^0(E,\Omega^1_{E/k})\to H^0(E,\Omega^1_{E/k})^{(p)}$ is the zero morphism.
	\spa
		\item $F^*\colon  H^1(E,\calO_{E})^{(p)}\to H^1(E,\mathcal{O}_{E})$ is the zero morphism.
	\spa
		\item The Frobenius on $H^1_{\mathrm{dR}}(E/k)$ is nilpotent of exact index $2$: explicitly, $F^*\circ(F^*)^{(p)}=0$ and $F^*\neq 0$.
	\spa
		\item The Hodge and conjugate filtrations on $H^1_{\mathrm{dR}}(E/k)$ coincide.
	\end{enumerate}
\end{prop}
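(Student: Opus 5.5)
The equivalences $(1)\Leftrightarrow(2)\Leftrightarrow(3)$ are essentially the discussion preceding the statement. I will organise the rest around the duality between $F$ and $V$ and the exact sequence \eqref{eq:FZip}. We may assume $k$ is perfect, or even $k=\kbar$, since every condition is invariant under field extension: base change commutes with de Rham cohomology and with Frobenius twists.

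\emph{$(1)\Leftrightarrow(2)$.} From $0\to E^{(p)}[V]\to E^{(p)}[p]\to E[F]\to 0$ and the connectedness of $E[F]$, the $p$-rank is the $\F_p$-rank of $E^{(p)}[V](\kbar)$. Since $\deg V=p$, $V$ is either étale, with kernel of order $p$ having $p$ points, or purely inseparable.

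\emph{$(2)\Leftrightarrow(3)$.} A degree $p$ isogeny is purely inseparable if and only if its differential at the origin vanishes. By invariance of differentials this differential is identified with $V^*$ on $H^0(\Omega^1)$.

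\emph{$(3)\Leftrightarrow(4)$.} Use that $F^*$ and $V^*$ are adjoint under Poincaré duality. The pairing $H^1_{\mathrm{dR}}\times H^1_{\mathrm{dR}}\to k$ identifies $H^1(E,\calO_E)$ with the dual of $H^0(E,\Omega^1_{E/k})$, because the Hodge filtration is isotropic of rank $1$. Both maps respect filtrations: $F^*$ kills $(\mathrm{Fil}^1_{\mathrm{Hdg}})^{(p)}$ because $F^*$ kills $1$-forms. Hence $V^*|_{H^0(\Omega^1)}$ is, up to Frobenius twist, the transpose of the map $F^*$ induced on $H^1(\calO)$. One vanishes if and only if the other does.

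\emph{$(4)\Leftrightarrow(6)$.} By \eqref{eq:FZip}, $F^*$ on $H^1_{\mathrm{dR}}$ has kernel $(\mathrm{Fil}^1_{\mathrm{Hdg}})^{(p)}$ and image $\mathrm{Fil}^{\mathrm{conj}}_1$, which is a line. The map in (4) is the composite
\[
H^1(\calO)^{(p)}\cong H^1_{\mathrm{dR}}{}^{(p)}/(\mathrm{Fil}^1_{\mathrm{Hdg}})^{(p)}\xrightarrow{\sim}\mathrm{Fil}^{\mathrm{conj}}_1\to H^1_{\mathrm{dR}}/\mathrm{Fil}^1_{\mathrm{Hdg}}=H^1(\calO).
\]
It vanishes if and only if $\mathrm{Fil}^{\mathrm{conj}}_1\subseteq\mathrm{Fil}^1_{\mathrm{Hdg}}$. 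Since both are lines, this happens if and only if they are equal.

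\emph{$(5)\Leftrightarrow(6)$.} We always have $F^*\neq0$, since its image is the line $\mathrm{Fil}^{\mathrm{conj}}_1$. The composite $F^*\circ(F^*)^{(p)}$ has image $F^*\bigl((\mathrm{Fil}^{\mathrm{conj}}_1)^{(p)}\bigr)$. This vanishes exactly when $(\mathrm{Fil}^{\mathrm{conj}}_1)^{(p)}\subseteq\ker F^*=(\mathrm{Fil}^1_{\mathrm{Hdg}})^{(p)}$. Untwisting, which is legitimate because twisting by the Frobenius of a perfect field is a bijection on lines, this says the two filtrations coincide.

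The main obstacle is the duality step $(3)\Leftrightarrow(4)$, specifically justifying that $F^*$ and $V^*$ on de Rham cohomology are adjoint under Poincaré duality. The statement is given in the text, but the identification of their restrictions to the graded pieces needs care with the twists. If this proves awkward, I would instead bypass it by proving $(2)\Leftrightarrow(4)$ directly. The map $F^*$ on $H^1(\calO_E)$ is the Hasse invariant. Interpreting $H^1(E,\calO_E)$ as the tangent space of $\mathrm{Pic}^0=E^\vee\cong E$, $F^*$ corresponds to the differential of the Verschiebung of the dual, which reduces the question to (3).
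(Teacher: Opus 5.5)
Your proposal is correct and follows essentially the same route as the paper. You get $(1)\Leftrightarrow(2)\Leftrightarrow(3)$ from the preceding discussion, $(3)\Leftrightarrow(4)$ from Poincar\'e duality, and the remaining equivalences from \eqref{eq:FZip}, i.e.\ $\ker F^*=(\mathrm{Fil}^1_{\mathrm{Hdg}})^{(p)}$ and $\mathrm{Im}\,F^*=\mathrm{Fil}^{\mathrm{conj}}_1$. The only differences are organisational: you prove $(4)\Leftrightarrow(6)$ and $(5)\Leftrightarrow(6)$ directly instead of the paper's cycle $(4)\Rightarrow(5)\Rightarrow(6)\Rightarrow(4)$, and you spell out the duality step in more detail.
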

\begin{proof}
We have already discussed the equivalence of the first three properties. Thanks to Poincaré duality (3) and (4) are equivalent. Property (4) implies that $F^*\circ(F^*)^{(p)}=0$. To exclude index $1$ we note that $F^*$ cannot vanish thanks to \eqref{eq:FZip}. Thus $(4)\Rightarrow (5)$. Property (5) implies $\mathrm{Im}((F^*)^{(p)})=\mathrm{Ker}(F^*)$, which in turn implies $(6)$ thanks to \eqref{eq:FZip}. Finally, {if (6) holds, then $\mathrm{Im}(F^*)=\mathrm{Fil}^{\mathrm{conj}}_1=\mathrm{Fil}^1_\mathrm{Hdg}$; hence the induced map to $H^1_{\mathrm{dR}}(E/k)/\mathrm{Fil}^1_\mathrm{Hdg}=H^1(E,\calO_E)$ vanishes, which is (4).}\end{proof}
\subsubsection{Torsion Dieudonné modules}

 Proposition \ref{Prop:Supersingular} shows that the $k$-vector space $H^1_{\mathrm{dR}}(E/k)$ together with $F^*$ and $V^*$ encodes important information about the $p$-torsion of $E$. This is the first example of a \textit{torsion Dieudonné module}. Let us present torsion Dieudonné modules when $k$ is perfect.  
 
 \begin{defi}We define $$\mathcal{D}_k\coloneqq W\{F,V\}/(FV-p,VF-p),$$ where $W$ is the ring of Witt vectors of $k$ with Frobenius lift $\sigma$ and $F,V$ are two non-commuting variables satisfying $F\cdot a=\sigma(a)\cdot F$ and $V\cdot \sigma(a)=a\cdot V$ for every $a\in W$. The ring $\mathcal{D}_k$ is called the \textit{Dieudonné ring} of $k$. A finite $W$-length left $\mathcal{D}_k$-module is a \textit{torsion Dieudonné module over $k$}. 
\end{defi}
\begin{exam}The groups $H^1_\mathrm{dR}(E/k),$ $H^0(E,\Omega^1_{E/k})$, and $H^1(E,\calO_E)$ are torsion Dieudonné modules over $k$ with the action of $F$ and $V$ given by the maps $F^*$ and $V^*$ defined above. Moreover, if $G$ is a commutative finite group scheme over $k$ and $\omega_{G/k}$ is pullback of the sheaf of differentials $\Omega^1_{G/k}$ to the identity section of $G$, the Frobenius and Verschiebung of $G$ induce maps $$F^*\colon \omega_{G/k}^{(p)}\to \omega_{G/k}\quad \textrm{and}\quad V^*\colon \omega_{G/k}\to \omega_{G/k}^{(p)}$$ which make $\omega_{G/k}$ into a torsion Dieudonné module over $k$. If $M$ is a torsion Dieudonné module over $k$, the dual $M^\vee\coloneqq \mathrm{Hom}_W(M,K/W)$, where $K$ is the fraction field of $W$, is also a torsion Dieudonné module over $k$ with the action of $F$ and $V$ given by $(F\cdot f)(x)\coloneqq \sigma(f(V\cdot x))$ and $(V\cdot f)(x)\coloneqq \sigma^{-1}(f(F\cdot x))$.
\end{exam}
We are now ready to state the main result of this lecture.
\begin{theo}[Dieudonné, Cartier, Oda]\label{Thm:DieudonneOda}
	Let $k$ be a perfect field of characteristic $p$. Denote by $\mathsf{FGS}_{p/k}$ the category of commutative finite group schemes over $k$ of $p$-power order and by $\mathsf{DMod}^{\mathrm{tor}}_k$ the category of torsion Dieudonné modules over $k$. There is an exact equivalence of abelian categories
	\[
	\mathbb{D}\colon \mathsf{FGS}_{p/k}^{\mathrm{op}}\xrightarrow{\sim}\mathsf{DMod}^{\mathrm{tor}}_k
	\]
	satisfying the following properties.
	\begin{enumerate}
		\item $G$ has order $p^n$ if and only if $\mathrm{length}_{W}(\mathbb{D}(G))=n$.
		
		\spa

		\item $\mathbb{D}(G)/F(\mathbb{D}(G))={\omega_{G/k}}$.
		
		\spa
		
		\item $\mathbb{D}(G^\vee)=\mathbb{D}(G)^\vee,$ where $G^\vee$ is the Cartier dual of $G$.
		
		\spa
		\item For an abelian variety over $k$, there is a canonical isomorphism $$\mathbb{D}(A[p])= H^1_{\mathrm{dR}}(A/k)$$ as $\mathcal{D}_k$-modules.
	\end{enumerate}

\end{theo}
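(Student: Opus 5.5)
I would construct $\mathbb{D}$ via Witt covectors, following Fontaine's approach; it is the most direct route to exactness and to properties (1)--(3).

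\textbf{Construction.} Let $W_n$ be the ring scheme of length-$n$ Witt vectors over $k$. Set $CW=\varinjlim_n W_n$, with transition maps given by $V$ (this is the sheaf of unipotent Witt covectors). Define
\[\mathbb{D}(G)\coloneqq \varinjlim_n \mathrm{Hom}_{k\text{-gp}}(G,W_n),\]
with $F$ and $V$ induced from the Frobenius and Verschiebung of $W_n$. For non-unipotent $G$ this must be modified. I would use the canonical decomposition $G=G_{\mathrm{uu}}\times G_{\mathrm{um}}\times G_{\mathrm{mu}}\times G_{\mathrm{mm}}$ over a perfect field, which is functorial and splits $\mathsf{FGS}_{p/k}$ into a product of categories. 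Then:
\begin{itemize}
\item on the two unipotent pieces, use the Hom into $W_n$;
\item on the multiplicative pieces, use Cartier duality together with the dual module $M^\vee$ from the Example;
\item on the étale part, pass to $\kbar$ and use Galois descent: $\mathbb{D}(G)=(\mathrm{Hom}(G_{\kbar},CW_{\kbar}))^{\Gal(\kbar/k)}$. Since $CW$ is divisible, this handles the étale components.
\end{itemize}

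\textbf{Exactness and equivalence.} Exactness reduces to $\mathrm{Ext}^1(G,W_n)$ vanishing in the appropriate limit. This follows from the vanishing of $H^1(G,\mathbb{G}_a)$-type obstructions for unipotent $G$ and from the divisibility of $CW$ in the fppf topology. Full faithfulness and essential surjectivity are proved by dévissage:
\begin{itemize}
\item every object of either category has a finite filtration with simple graded pieces;
\item over $\kbar$, the simple group schemes are $\mathbb{Z}/p$, $\mu_p$, $\alpha_p$, and their images are the three simple Dieudonné modules $k$ with $(F,V)=(1,0)$, $(0,1)$, $(0,0)$;
\item one then compares $\mathrm{Ext}^1$ in both categories on these simple objects, which is an explicit computation using the Yoneda $5$-lemma.
\end{itemize}
Galois descent from $\kbar$ to $k$ is formal because both sides satisfy descent.

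\textbf{Properties.} Property (1) follows from exactness and the computation on simple objects, since each has order $p$ and length $1$. For (3), Cartier duality swaps $F$ and $V$, and checking on simple objects together with functoriality identifies $\mathbb{D}(G^\vee)$ with $\mathbb{D}(G)^\vee$ via the pairing into $K/W$. For (2), the identification $\mathbb{D}(G)/F\mathbb{D}(G)\cong\omega_{G/k}$ comes from differentiating homomorphisms $G\to W_n$. Indeed, $\mathbb{D}(G)/F$ only sees the first Witt component, i.e.\ $\mathrm{Hom}(G,\mathbb{G}_a)/F$, and the map $f\mapsto f^*(dx)$ identifies this with invariant differentials. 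One checks compatibility with the Frobenius twists in the definition of $F^*$ on $\omega$.

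\textbf{Abelian varieties (main obstacle).} Property (4), $\mathbb{D}(A[p])\cong H^1_{\mathrm{dR}}(A/k)$, is where I expect the real difficulty. My plan is Oda's argument:
\begin{itemize}
\item use $H^1_{\mathrm{dR}}(A/k)\cong$ the Lie algebra of the universal vector extension of $A^\vee$, dualised;
\item use the exact sequence $0\to H^0(A,\Omega^1)\to H^1_{\mathrm{dR}}\to H^1(A,\mathcal{O}_A)\to 0$ and the identifications $H^0(A,\Omega^1)=\omega_{A[p]}$ (equivalently $\omega_{A[F]}$) and $H^1(A,\mathcal{O}_A)=\omega_{A^\vee[p]}^\vee$ (equivalently the dual of $\omega_{A^\vee[F]}$);
\item match these with the two-step filtration $F\mathbb{D}\subseteq\mathbb{D}$ using (2) and (3);
\item construct a canonical map $\mathbb{D}(A[p])\to H^1_{\mathrm{dR}}$ by sending a homomorphism $A[p]\to W_1=\mathbb{G}_a$, or an extension class, to a de Rham class, using $\mathrm{Ext}^1(A,\mathbb{G}_a)=H^1(A,\mathcal{O}_A)$ and $\mathrm{Ext}^1(A,\mathbb{G}_a^{\natural})$ for the full $H^1_{\mathrm{dR}}$;
\item conclude it is an isomorphism by the five lemma on the two-step filtrations, and check compatibility with $F^*,V^*$ on each graded piece.
\end{itemize}

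The hardest point is canonicity together with compatibility of $F$ and $V$ across the extension, not just on the graded pieces. There I would reduce to the universal case via the Cartier isomorphism and the exact sequence of complexes given before \eqref{eq:FZip}.
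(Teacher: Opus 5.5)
The paper gives no proof of this theorem. It is quoted from the classical literature (Dieudonn\'e--Cartier for the equivalence, Oda for (4)), so your outline has to stand on its own. Its architecture is a viable classical route, as in Demazure's lectures: unipotent covectors $\varinjlim_V W_n$ on the unipotent part, Cartier duality on the multiplicative part. However, several steps fail as written.

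\textbf{Properties (2) and (3).} Since $W_1=\mathbb{G}_a$ is exactly the kernel of $V$ on $\varinjlim_V W_n$, what $\mathrm{Hom}(G,\mathbb{G}_a)$ computes is the submodule $\mathbb{D}(G)[V]$, not the quotient $\mathbb{D}(G)/F\mathbb{D}(G)$. Moreover, projection to the first Witt component is not compatible with the transition maps $V$; only the last component is. The claimed identification already fails for $G=E[p]$ with $E$ supersingular:
\begin{itemize}
\item $\mathrm{Hom}(E[p],\mathbb{G}_a)$ is a line;
\item every nonzero such homomorphism has kernel the unique order-$p$ subgroup $E[F]$, so it factors through the relative Frobenius $E[p]\to E^{(p)}[V]$;
\item hence $f\mapsto f^*(dx)$ is identically zero, whereas $\omega_{E[p]}=\omega_E$ is a line.
\end{itemize}
A correct route is to prove (3) first. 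Then $\mathrm{Lie}(G)=\mathrm{Hom}(G^\vee,\mathbb{G}_a)=\mathbb{D}(G^\vee)[V]$, which by (3) and the formula $(V\cdot f)(x)=\sigma^{-1}(f(F\cdot x))$ is the dual of $\mathbb{D}(G)/F\mathbb{D}(G)$. But (3) on the local--local part does not follow from ``checking on simple objects together with functoriality''. You first need a natural pairing into $K/W$, classically built from the Cartier duality $\ker(F^m\colon W_n\to W_n)^\vee\simeq\ker(F^n\colon W_m\to W_m)$ and its compatibility with $F$ and $V$.

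\textbf{Property (4).} You match the Hodge sequence $0\to H^0(A,\Omega^1)\to H^1_{\mathrm{dR}}(A/k)\to H^1(A,\calO_A)\to 0$ with $F\mathbb{D}\subseteq\mathbb{D}$. But by (2), $\omega_{A[p]}=H^0(A,\Omega^1)$ is the \emph{quotient} $\mathbb{D}/F\mathbb{D}$, not a submodule. In fact $F\mathbb{D}=\mathrm{Im}(F^*)=\mathrm{Fil}^{\mathrm{conj}}_1\simeq H^1(A,\calO_A)^{(p)}$, while $\mathrm{Fil}^1_{\mathrm{Hdg}}$ corresponds to $V\mathbb{D}=\ker F$; see the ordinary case in Example~\ref{exam:Dieudonne}. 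Consistently, your identification $\mathrm{Hom}(A[p],\mathbb{G}_a)\simeq\mathrm{Ext}^1(A,\mathbb{G}_a)=H^1(A,\calO_A)$ puts $H^1(A,\calO_A)$ in the submodule $\mathbb{D}[V]=F\mathbb{D}$. So the five-lemma comparison must be run against the conjugate filtration, via the Cartier isomorphism. The real work is constructing the comparison map on all of $\mathbb{D}$, e.g.\ through $H^1_{\mathrm{dR}}(A/k)\simeq\mathrm{Lie}\,E(A^\vee)$ (with no dualisation).

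\textbf{The equivalence.} Agreement of $\mathrm{Hom}$ and $\mathrm{Ext}^1$ between simple objects gives full faithfulness of an exact functor, but not essential surjectivity. For instance, restriction of scalars from finite-length $k[x]/(x^2)$-modules to $k[x]/(x^3)$-modules is exact, fully faithful, and bijective on $\mathrm{Hom}$ and $\mathrm{Ext}^1$ between simples, yet misses $k[x]/(x^3)$. You also need injectivity on $\mathrm{Ext}^2$. Alternatively, as in the classical proofs, use explicit presentations such as $\mathbb{D}(\ker(F^m\colon W_n\to W_n))=\mathcal{D}_k/(\mathcal{D}_kF^m+\mathcal{D}_kV^n)$ together with exactness.

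\textbf{Exactness.} Exactness does not come from a vanishing of $H^1(G,\mathbb{G}_a)$, which equals $\mathrm{Hom}(G,\mathbb{G}_a)\neq 0$. The needed input is $\varinjlim_V\mathrm{Ext}^1(G,W_n)=0$ for finite unipotent $G$.
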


\begin{exam}\label{exam:Dieudonne}Here are some examples of Dieudonné modules.

	\begin{itemize}
		\item[--] $\mathbb{D}(\mu_p)=\omega_{\mu_p/k}$ is isomorphic to $k$ with $F=0$ and $V=\sigma^{-1}$.
		\spa
	\item[--] $\mathbb{D}(\Z/p)=\mathbb{D}(\mu_p)^\vee$ is isomorphic to $k$ with $F=\sigma$ and $V=0$.
	\spa
	\item[--] $\mathbb{D}(\alpha_p)=\omega_{\alpha_p/k}$ is self-dual, thus it is isomorphic to $k$ with $F=0$ and $V=0$.
	\spa
	\item[--] If $E$ is ordinary and $k$ is algebraically closed, then  $\mathbb{D}(E[p])=k\oplus k$ with $$F=\begin{pmatrix} \sigma & 0 \\ 0 & 0 \end{pmatrix}\quad \textrm{and} \quad V=\begin{pmatrix} 0 & 0 \\ 0 & \sigma^{-1} \end{pmatrix}.$$ The two summands correspond to $$\mathrm{Fil}_1^\mathrm{conj}\subseteq H^1_{\mathrm{dR}}(E/k)\quad \textrm{and}\quad \mathrm{Fil}^1_\mathrm{Hdg}\subseteq H^1_{\mathrm{dR}}(E/k).$$ In this case, $E[p]=\mu_p\times \Z/p$.
	\spa
	\item[--] If $E$ is a supersingular elliptic curve over $k$, putting together Proposition \ref{Prop:Supersingular} and Theorem \ref{Thm:DieudonneOda}, we get that $\mathbb{D}(E[p])=k\oplus k$ with $F$ and $V$ nilpotent of index $2$. We deduce the exact sequence $$0\to \mathbb{D}(\alpha_p) \to \mathbb{D}(E[p])\to {\mathbb{D}}(\alpha_p)\to 0,$$ which shows that $E[p]$ is a non-trivial extension of $\alpha_p$ by $\alpha_p$.
	\end{itemize} 
\end{exam}

When $k$ is not perfect, for example when $k=\Fp(t)$, a variant of Theorem \ref{Thm:DieudonneOda} is still valid. Nonetheless, in this situation, the action of $F$ and $V$ on $H^1_\mathrm{dR}(E/k)$ is not enough to recover $E[p]$. One needs to further endow the modules with a \textit{connection}. Let us explain where this connection comes from.

\subsubsection{Gauss--Manin connection for elliptic curves.} Relative de Rham cohomology of a smooth and proper morphism is endowed with the \textit{Gauss--Manin connection}. For elliptic curves, this connection is quite explicit. If we write $f\colon E\to \Spec(k)$ for the structure morphism, there is a natural exact sequence of complexes (exercise) \begin{equation}\label{eq:GaussManin}0\to f^*\Omega^1_{k/\Fp}\otimes \Omega^\bullet_{E/k}[-1] \to K^\bullet \to \Omega^\bullet_{E/k} \to 0,\end{equation}
 where  $$K^\bullet\coloneqq  \calO_E\to \Omega^1_{E/\Fp}\to \Omega^2_{E/\Fp}/f^*\Omega^2_{k/\Fp}\to 0\to \cdots.$$

After applying $Rf_*$ to \eqref{eq:GaussManin},  the connecting morphism $$\nabla_{\mathrm{GM}}\colon R^1f_*\Omega^\bullet_{E/k}\to R^1f_*(f^*\Omega^1_{k/\Fp}\otimes \Omega^\bullet_{E/k})=\Omega^1_{k/\Fp}\otimes R^1f_*\Omega^\bullet_{E/k}$$ is a flat connection on the vector bundle $R^1f_*\Omega^\bullet_{E/k}=H^1_{\mathrm{dR}}(E/k)$. This is called the \textit{Gauss--Manin connection} on $H^1_{\mathrm{dR}}(E/k)$. {If $k$ has a finite $p$-basis,} the datum of $(H^1_{\mathrm{dR}}(E/k),F,V,\nabla_\mathrm{GM})$ is enough to recover $E[p]$ as a group scheme over $k$. From this description, it is not hard to see that the Gauss--Manin connection preserves the conjugate filtration.

\begin{lemm}
The conjugate filtration on $H^1_{\mathrm{dR}}(E/k)$ is preserved by the Gauss--Manin connection.
\end{lemm}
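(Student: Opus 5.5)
The conjugate filtration is $\mathrm{Fil}^\mathrm{conj}_1=\mathrm{Im}(F^*)$, the image of the relative Frobenius pullback $F^*\colon H^1_{\mathrm{dR}}(E/k)^{(p)}\to H^1_{\mathrm{dR}}(E/k)$; this is the content of \eqref{eq:FZip}. So I will show that every class in the image of $F^*$ is sent by $\nabla_{\mathrm{GM}}$ into $\Omega^1_{k/\Fp}\otimes \mathrm{Im}(F^*)$. In fact I expect more: $\nabla_{\mathrm{GM}}$ kills $\mathrm{Im}(F^*)$. The key steps, in order, are as follows.

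\textbf{Step 1: the Frobenius twist carries a canonical connection.} Write $E^{(p)}=E\times_{k,\mathrm{Frob}}k$. Then $H^1_{\mathrm{dR}}(E/k)^{(p)}=H^1_{\mathrm{dR}}(E/k)\otimes_{k,\mathrm{Frob}}k$, by flat base change along the absolute Frobenius of $k$. This $k$-vector space has the canonical connection $\nabla_{\mathrm{can}}$ that kills $1\otimes x$, because $d(a^p)=0$. Its horizontal sections span the whole space.

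\textbf{Step 2: Gauss--Manin on the twist is $\nabla_{\mathrm{can}}$.} I will check that the Gauss--Manin connection of $E^{(p)}/k$ agrees with $\nabla_{\mathrm{can}}$. This follows from functoriality of \eqref{eq:GaussManin} under the base change $\mathrm{Frob}_k$: the connecting map of the pulled-back sequence equals the pullback of the original connecting map, composed with $\mathrm{Frob}_k^*\Omega^1_{k/\Fp}\to\Omega^1_{k/\Fp}$, and that last map is zero.

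\textbf{Step 3: $F^*$ is horizontal.} The relative Frobenius $F\colon E\to E^{(p)}$ is a $k$-morphism, so it pulls the sequence \eqref{eq:GaussManin} for $E^{(p)}$ back to the one for $E$. Concretely, $F^*$ induces a map of complexes $K^\bullet_{E^{(p)}}\to F_*K^\bullet_E$, compatible with the sub-complexes $f^*\Omega^1_{k/\Fp}\otimes\Omega^\bullet[-1]$ and with the quotients. Naturality of connecting morphisms then gives
\[
\nabla_{\mathrm{GM}}\circ F^*=(\mathrm{id}\otimes F^*)\circ\nabla_{\mathrm{GM}}^{E^{(p)}}=(\mathrm{id}\otimes F^*)\circ\nabla_{\mathrm{can}}.
\]
Now take $y=F^*(\sum a_i\otimes x_i)$ in the image. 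By Step 1, $\nabla_{\mathrm{can}}(\sum a_i\otimes x_i)=\sum da_i\otimes(1\otimes x_i)$, so $\nabla_{\mathrm{GM}}(y)=\sum da_i\otimes F^*(1\otimes x_i)$, which lies in $\Omega^1_{k/\Fp}\otimes\mathrm{Fil}^\mathrm{conj}_1$. This is exactly the claim.

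\textbf{Main obstacle.} The delicate point is Step 3: checking that the relative Frobenius really induces a morphism of the exact sequences \eqref{eq:GaussManin}, including the middle terms built from absolute differentials. One has to see that $F^*$ on $\Omega^1_{E^{(p)}/\Fp}$ is compatible with the inclusion of $f^*\Omega^1_{k/\Fp}$. This holds because $F$ lies over the identity of $k$. The vanishing of the Frobenius-twist Gauss--Manin connection in Step 2 is then a formal consequence of $d(a^p)=0$. Alternatively, one could argue locally with a Čech description of $H^1_{\mathrm{dR}}$ and the Cartier isomorphism, which describes $\mathrm{Fil}^\mathrm{conj}_1$ as classes of locally exact $1$-forms. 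I would fall back on that only if the functoriality bookkeeping becomes messy.
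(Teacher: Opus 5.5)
Your argument is correct and is essentially the paper's intended proof. The paper's hint is exactly to use $F^*\colon (K^\bullet)^{(p)}\to F_*K^\bullet$ to intertwine the Gauss--Manin connections of $E^{(p)}$ and $E$, with the Frobenius-twisted connection killing the classes $1\otimes x$ because $d(a^p)=0$.

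One small correction: your opening expectation that $\nabla_{\mathrm{GM}}$ kills all of $\mathrm{Im}(F^*)$ is false for imperfect $k$. By Leibniz, no nonzero $k$-subspace can be killed when $\Omega^1_{k/\Fp}\neq 0$. As your own Step 3 computation shows, only the classes $F^*(1\otimes x)$ are horizontal, and these span $\mathrm{Fil}^{\mathrm{conj}}_1$ over $k$.
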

\begin{proof}
	Exercise. \textit{Hint: Look at $F^*\colon (K^\bullet)^{(p)}\to F_*K^\bullet$.}
\end{proof}

In order to study more in general the $p^n$-torsion of abelian schemes over general bases (and other $p$-torsion and $p$-adic phenomena in algebraic geometry) Grothendieck introduced the theory of \textit{crystalline cohomology} and \textit{crystals}, which provides a $p$-adic lift of de Rham cohomology and Gauss--Manin connections to $\Zp$ and allows extensions of Dieudonné theory to more general bases. We will give a quick introduction to this theory in the next sections.

\subsection{The crystalline site} Grothendieck's point of view on de Rham cohomology is that flat connections are vector bundles endowed with some infinitesimal descent datum. With this viewpoint, Taylor series become a way to describe the relation between two infinitely close points. Crystals are a similar notion, where the infinitesimal neighborhoods are replaced by \textit{PD-thickenings}, which are better suited to deal with Taylor series. Indeed, they solve the lack of operations like $\tfrac{x^n}{n!}$ over general rings. 
	
	\begin{defi}
		A \textit{PD-structure} or \textit{divided power structure} on an ideal $I$ of a ring $A$ is a collection of maps $\gamma_n: I \to I$ for $n \geq 1$  satisfying the following axioms for all $a\in A$, $x,y\in I,$  and $m,n\geq 1$, with the convention that $\gamma_0(x)\coloneqq 1$.
		
		\begin{enumerate}
			\item $\gamma_1(x)=x$.  
			\item $\gamma_n(x+y)=\sum_{i=0}^n \gamma_i(x)\gamma_{n-i}(y)$.
			\item $\gamma_n(ax)=a^n\gamma_n(x)$.
			\item $\gamma_m(x)\gamma_n(x)=\binom{m+n}{m}\gamma_{m+n}(x)$.
			\item $\gamma_m(\gamma_n(x))=\frac{(mn)!}{m!(n!)^m}\gamma_{mn}(x)$. 
		\end{enumerate}

		A \textit{crystalline extension\footnote{In the literature this corresponds to the definition of an \textit{affine PD-thickening} over $(\Z_{(p)},(p))$. We prefer this more compact notation.}} is the datum of an $\Fp$-algebra $A$, a surjective morphism of $\Z_{(p)}$-algebras $B\twoheadrightarrow A$ with $p\in \mathrm{Nil}(B)$,
and a PD-structure $\gamma$ on $\mr{Ker}(B\twoheadrightarrow A)$ such that $\gamma_n(p)=p^n/n!$ for every $n\geq 0$.

\spa

If $X$ is a scheme over $\Fp$, we denote by $X_\mr{cris}$ the opposite of the category of \textit{crystalline extensions} endowed with an open immersion $\Spec(A)\hookrightarrow X.$ We endow it with the Zariski topology. The site $X_\mr{cris}$ is the \textit{(small affine) crystalline site}\footnote{In what follows, if $X=\Spec(C)$ is affine, one can replace $X_\mr{cris}$ with the smaller site of crystalline extensions  $(B\twoheadrightarrow A,\gamma)$ such that $A=C$.} of $X$. The structure sheaf $\calO_\mathrm{cris}$ of $X_\mr{cris}$ is defined by $\calO_\mathrm{cris}(B\twoheadrightarrow A,\gamma)\coloneqq B$. The cohomology groups $H^i(X_\mr{cris},\calO_\mathrm{cris})$ are called the \textit{crystalline cohomology groups} of $X$. The absolute Frobenius morphism $F\colon X\to X$ induces a morphism of sites $F\colon X_\mr{cris}\to X_\mr{cris}$ which endows crystalline cohomology groups with a fundamental Frobenius action. See \cite{Ber74} and \stack{07GI} for more details.
	\end{defi}

	\begin{rema}
		If $(B\twoheadrightarrow A,\gamma)$ is a crystalline extension, then $\mathrm{Ker}(B\twoheadrightarrow A)\subseteq \mathrm{Nil}(B)$. Note also that for every $\Fp$-algebra $A$ and a surjective morphism $B\twoheadrightarrow A$ such that $\mathrm{Ker}(B\twoheadrightarrow A)^2=0$, there exists a trivial PD-structure such that $\gamma_n^\mathrm{triv}(x)=0$ for every $x\in \mathrm{Ker}(B\twoheadrightarrow A)$ and $n\geq 2$. If $p$ is odd, $\gamma^\mathrm{triv}$ satisfies in addition the condition $\gamma_n^\mathrm{triv}(p)=p^n/n!$ for every $n\geq 0$, giving rise to crystalline extensions. For higher order thickenings, the existence of a PD-structure is not guaranteed.
	\end{rema}
	
	\begin{defi} If $\calF$ is a sheaf of $\calO_\mathrm{cris}$-modules on $X_\mr{cris}$, we denote by $\calF_B$ the $B$-module $\calF(B\twoheadrightarrow A,\gamma)$.  A finitely presented quasi-coherent $\calO_\mathrm{cris}$-module $\calF$ over $X_\mr{cris}$ is a \textit{crystal} if it satisfies the following rigidity condition. For any morphism $(B'\twoheadrightarrow A',\gamma') \to (B\twoheadrightarrow A,\gamma)$ in $X_\mr{cris}$, the comparison map $$\calF_B\otimes_B B'\to \calF_{B'}$$ is an isomorphism. We denote by $\mathrm{Crys}(X)$ the category of crystals over $X$. This is a $\Zp$-linear abelian category. The category $\mathrm{Isoc}(X)$ of \textit{isocrystals} over $X$ is the category $\mathrm{Crys}(X)\otimes_\Zp \Qp$ obtained by keeping the objects unchanged and setting $$\Hom_{\mathrm{Isoc}(X)}(\calM,\calN) \coloneqq \Hom_{\mathrm{Crys}(X)}(\calM,\calN)\otimes_\Zp \Qp$$ for every $\calM,\calN\in \mathrm{Crys}(X)$.
	\end{defi}

	\begin{exam}
		If $X = \Spec(R)$ for a perfect ring $R$, then (finitely presented) crystals over $X$ are equivalent to finitely presented modules over the ring of Witt vectors $W(R)$. Another situation where crystals can be easily described is when $R$ is semi-perfect, i.e. the absolute Frobenius morphism $R\to R$ is surjective. In this case, crystals over $\Spec(R)$ are equivalent to finitely presented modules over $\mathbb{A}_\mathrm{cris}(R)$.
	\end{exam}

	\subsection{Crystals as flat connections}
	On a smooth scheme over a perfect field, crystals can be locally described in a more concrete way using modules with integrable connections. 

	\spa
	
	Let $A$ be a smooth $k$-algebra such that $\Omega^1_{A/k}$ is a free $A$-module with basis $dx_1,\dots,dx_n$. Let $\tilde{A}$ be a $p$-complete flat lift of $A$ to $W$. Choose lifts $\tilde{x}_i\in \tilde{A}$ of $x_i\in A$ and let $$\widehat{\Omega}^1_{\tilde{A}/W}=\bigoplus_{i=1}^n \tilde{A} d\tilde{x}_i$$ be the $p$-adically completed sheaf of differentials. For a finitely generated module $M$ with flat connection
		$$
		\nabla: M \to M \otimes_{\tilde{A}}\widehat{\Omega}^1_{\tilde{A}/W},
		$$ there exist unique derivations $\nabla_{\partial_{x_i}}: M \to M$ such that $\nabla(m) = \sum_i \nabla_{\partial_{x_i}}(m) \otimes d\tilde{x}_i$. These are the \textit{covariant derivatives} of $\nabla$ with respect to $\partial_{x_1},\dots,\partial_{x_n}$.
	
	\begin{defi}
		A (finitely generated) \textit{topologically quasi-nilpotent flat connection} over $\tilde{A}$ is a finitely generated $\tilde{A}$-module $M$ equipped with a flat connection
		$$
		\nabla: M \to M \otimes_{\tilde{A}}\widehat{\Omega}^1_{\tilde{A}/W}
		$$
		 such that for every $m\in M$, the reduction modulo $p$ of $\nabla_{\partial_{x_i}}$ is a nilpotent $A^p$-linear endomorphism of $M/pM$ for every $i$.
	\end{defi}

	\begin{theo}\label{Thm:CrystalsAsConnections}
		The category of (finitely generated) crystals over $\Spec(A)$ is equivalent to the category of (finitely generated) \textit{topologically quasi-nilpotent flat connections} over $\tilde{A}$.
	\end{theo}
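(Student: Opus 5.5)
We will follow the classical route of Berthelot--Ogus: pass from crystals to stratifications on the PD-envelopes of the diagonal, and then to connections via divided-power Taylor series. Set $\tilde A_n\coloneqq \tilde A/p^{n+1}$. Since $p$ has divided powers compatible with $\gamma_n(p)=p^n/n!$, each $(\tilde A_n\twoheadrightarrow A)$ is a crystalline extension. Let $D_n(1)$ be the PD-envelope of the kernel of $\tilde A_n\otimes_{W}\tilde A_n\to A$, and define $D_n(2)$ similarly with three factors.

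\emph{Step 1 (crystals as stratified modules).} We will show that a crystal $\calF$ on $\Spec(A)$ is the same as a compatible system of finitely presented $\tilde A_n$-modules $M_n=\calF_{\tilde A_n}$ with isomorphisms $\varepsilon\colon D_n(1)\otimes M_n\iso M_n\otimes D_n(1)$ satisfying the cocycle condition on $D_n(2)$. Setting $M=\varprojlim M_n$ then gives a finitely generated $p$-complete $\tilde A$-module. The key input is that $\tilde A_n$ is a weakly final object. Given any crystalline extension $(B\twoheadrightarrow A',\gamma)$ with $\Spec A'\subseteq\Spec A$ affine open, formal smoothness of $\tilde A_n$ over $W_n$ and nilpotence of $\mathrm{Ker}(B\to A')$ (see the Remark above) let us lift $A\to A'$ to a map $\tilde A_n\to B$ for $n\gg0$; this map is automatically a PD-morphism after passing through $D_n(1)$. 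We then define $\calF_B\coloneqq M_n\otimes_{\tilde A_n}B$. Independence of the choice of lift comes from $\varepsilon$, since two lifts factor through $D_n(1)$, and the cocycle condition gives transitivity. Zariski gluing is harmless because everything localises. Conversely, a crystal evaluated on $\tilde A_n$ and on $D_n(1)$, via its two projections, yields $\varepsilon$ by the rigidity condition.

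\emph{Step 2 (stratifications as connections).} With $\xi_i\coloneqq 1\otimes\tilde x_i-\tilde x_i\otimes 1$, the ring $D_n(1)$ is the divided-power polynomial algebra $\tilde A_n\langle \xi_1,\dots,\xi_n\rangle$; this uses étale coordinates, which exist after shrinking because $dx_i$ is a basis. Given $\nabla$, we will define
\[
\varepsilon(1\otimes m)=\sum_{\underline{k}}\nabla_{\partial}^{\underline{k}}(m)\otimes\xi^{[\underline{k}]},
\]
where $\xi^{[\underline k]}=\prod_i\gamma_{k_i}(\xi_i)$. Flatness makes the $\nabla_{\partial_{x_i}}$ commute, so the expression is well defined. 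The sum is finite modulo $p^{n+1}$ exactly when $\nabla_\partial^{\underline k}(m)\to0$ $p$-adically, and this is equivalent to the quasi-nilpotence condition. To see the equivalence, note that nilpotence mod $p$ gives $\nabla_{\partial_{x_i}}^{N}M\subseteq pM$, and iterating gives $\nabla_{\partial_{x_i}}^{Nr}M\subseteq p^rM$. The cocycle condition follows from the Leibniz rule together with the formula $\Delta(\xi^{[k]})=\sum_{a+b=k}\xi^{[a]}\otimes\xi^{[b]}$. Conversely, a stratification yields $\nabla$ by reducing $\varepsilon$ modulo the divided-power square of the ideal, i.e.\ via the first-order terms, and flatness comes from the cocycle condition. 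These two constructions are mutually inverse. Functoriality is clear, so we obtain the equivalence.

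\emph{Main obstacle.} The delicate point is Step 1: showing that the value $\calF_B$ is independent of the chosen lift, and that lifts exist compatibly with the PD-structures. In particular, we must check that the map $\tilde A_n\otimes\tilde A_n\to B$ induced by two lifts extends to $D_n(1)$. This holds by the universal property of the PD-envelope, because the kernel lands in the PD-ideal of $B$ and $\gamma$ is compatible with that on $(p)$. We will also need care with finite presentation passing to the limit over $n$. Here we would invoke Nakayama's lemma and the $p$-completeness of $\tilde A$ to identify crystals with finitely generated $p$-complete $M$, as in \stack{07J7} and \cite{Ber74}.
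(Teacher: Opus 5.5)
Your proposal is correct in outline. It is the classical Berthelot--Ogus argument: crystals are identified with HPD-stratifications on the PD-envelopes $D_n(1)$ of the diagonal, and these in turn with quasi-nilpotent connections via divided-power Taylor series.

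The paper gives only an ``idea of proof'', which is the first-order, mod-$p$ shadow of your Step~2. It evaluates the crystal on $(A[\epsilon]\twoheadrightarrow A,\gamma^{\mathrm{triv}})$ and compares the pullbacks along $f_0$ and $f_\partial$. It then reads off $\nabla_\partial$ and the Leibniz rule from $t_\partial(m\otimes 1)=m+\nabla_\partial(m)\epsilon$, and leaves higher order to ``Taylor expansions''. In your language, $t_\partial$ is the base change of your level-$0$ $\varepsilon$ along the PD-morphism $D_0(1)\to A[\epsilon]$ induced by $(f_0,f_\partial)$. This morphism sends $\xi_i\mapsto\partial(x_i)\epsilon$ and kills $\xi^{[\underline k]}$ for $|\underline k|\geq 2$.

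The paper's route buys intuition with almost no machinery: a connection compares fibres at infinitely close points, one derivation at a time. By itself, however, it only produces a connection on $M/pM$. It gives neither the module over $\tilde A$ nor a quasi-inverse. Your route supplies both. The levels $\tilde A_n$ produce the $p$-adic module. The description $D_n(1)=\tilde A_n\langle\xi\rangle$ produces the quasi-inverse and shows that quasi-nilpotence is exactly the convergence condition for the Taylor series.

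A few points to tighten when you write it out.

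\emph{Lifting.} The kernel of $B\twoheadrightarrow A'$ is only nil, not nilpotent. So first lift to $B/J_0$, where $J_0$ is the ideal generated by the values of finitely many defining relations of $\tilde A_n$ at chosen lifts of its generators. This ideal is finitely generated by nilpotents, hence nilpotent, so you can then use formal smoothness along $J_0$.

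\emph{The base ring.} The paper's site lives over $\Z_{(p)}$, not over $W$. You need that each such $B$ carries a unique $W$-algebra structure compatible with $k\to A'$; this is where perfectness of $k$ is used. Only then do two lifts factor through $\tilde A_n\otimes_W\tilde A_n$.

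\emph{Mutual inverseness.} Your claim that the two constructions are ``mutually inverse'' hides the key computation. Write $\varepsilon(1\otimes m)=\sum_{\underline k}\theta_{\underline k}(m)\otimes\xi^{[\underline k]}$. The cocycle condition together with $\xi_{02}=\xi_{01}+\xi_{12}$ gives $\theta_{\underline k+\underline l}=\theta_{\underline k}\circ\theta_{\underline l}$, hence $\theta_{\underline k}=\nabla_\partial^{\underline k}$. This is exactly where divided powers matter: the operators dual to the $\xi^{[\underline k]}$ are plain monomials in the $\nabla_{\partial_{x_i}}$. Consequently a stratification is determined by its first-order part, and you should state this explicitly.
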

\subsubsection*{Idea of the proof}  Let $B\coloneqq A[\epsilon]$. A derivation $\partial\colon A\to A$ corresponds to a morphism of rings $A\to B$ defined by $a\mapsto a+\partial(a)\epsilon$. We write $$f_\partial \colon (B\twoheadrightarrow A,\gamma^\mathrm{triv})\to (A\twoheadrightarrow A,\gamma^\mathrm{triv})$$ for the associated morphism in the crystalline site of $\Spec(A)$. For a crystal $\calM$, write $M\coloneqq\calM(A\twoheadrightarrow A,\gamma^\mathrm{triv})$ and $N\coloneqq\calM(B\twoheadrightarrow A,\gamma^\mathrm{triv})$. By applying the crystal property for $f_0$ and $f_\partial$ (where $_0$ stands for the $0$-derivation) we get a $B$-linear isomorphism $$t_\partial\colon f^*_\partial M\iso N \iso f^*_0 M=M[\epsilon].$$ The connection attached to $\calM$, when reduced modulo $p$, is of the form $(M,\nabla)$. For $m\in M$ and $\partial$ a derivation, we have the fundamental relation $$t_\partial(m\otimes 1)=m+\nabla_\partial(m)\epsilon.$$ The identity $am\otimes 1=m\otimes (a+\partial(a)\epsilon)$ in $f^*_\partial M$ gives the Leibniz rule $$\nabla_\partial(am)=a\nabla_\partial(m)+\partial(a)m.$$  This makes explicit the idea that a connection is a rule that tells you how to deform elements of a module when moving between infinitely close points. The PD-structures are then needed to pass from first order thickenings to higher order thickenings via the theory of Taylor expansions.\qed
	
\begin{exam}[Kummer connection]
		Let $X = \Spec(\F_p[x,x^{-1}])$.  A $p$-adic lift is $\calX = \Spf(\tilde{A})$, where $\tilde{A}$ is the $p$-adic completion of $\Zp[x,x^{-1}]$. A crystal on $X$ corresponds to a finitely generated $\tilde{A}$-module $M$ with a connection $\nabla: M \to M \otimes \tilde{A}\, dx$ such that $\nabla$ is topologically quasi-nilpotent. For example, for $n$ prime to $p$, we can take $\nabla\colon \tilde{A}\to \tilde{A}\,dx$  to be the connection such that $\nabla(1)\coloneqq -\frac{1}{n}\frac{dx}{x}$. This connection is topologically quasi-nilpotent because $$\nabla_{\partial_x}\colon \F_p[x,x^{-1}]\to \F_p[x,x^{-1}] $$ is a nilpotent $\F_p[x^p,x^{-p}]$-linear endomorphism of index $p$ (check it on the basis $1,x^{-1},\dots,x^{-p+1}$).
	\end{exam}
\begin{rema}(Finite $p$-basis)		Let $A$ be a reduced $\F_p$-algebra. A \textit{$p$-basis} of $A$ is a subset $\{x_i\}_{i \in I} \subseteq A$ such that the monomials $\prod_{i \in I} x_i^{e_i}$ with $0 \leq e_i \leq p-1$ form a basis of $A$ as an $A^p$-module. Such rings admit flat $p$-complete lifts to $\Zp$. If $A$ is a smooth algebra over a perfect field, then Zariski locally it admits a finite $p$-basis. A ring like $\Fp[[x]]$ also admits a finite $p$-basis even if it is not of finite type over $\Fp$. Theorem \ref{Thm:CrystalsAsConnections} still holds for these rings.
	\end{rema}

\section{Crystalline Dieudonné modules and overconvergence}

\subsection{\texorpdfstring{$p$}{p}-divisible groups}
It turns out that by replacing the $p^n$-torsion of an abelian variety with the inductive system of $p^n$-torsion for all $n$, one gets a much better behaved object. This leads to the notion of \textit{$p$-divisible groups}, which is the main object of study in this section.

\begin{defi}
    Let $X$ be a scheme and $p$ be a prime. A \textit{$p$-divisible group} (also called a \textit{Barsotti--Tate group}) of height $h$ over $X$ is a sheaf of abelian groups $\mathcal{G}$ on the big fppf site of $X$ such that the following conditions hold.
\begin{enumerate}
    \item $\mathcal{G} = \varinjlim_n \mathcal{G}[p^n]$. 
    \item $\mathcal{G} \xrightarrow{\cdot p} \mathcal{G}$ is surjective.
    \item $\mathcal{G}[p^n]$ is finite and locally free\footnote{It is enough to assume this condition when $n=1$.} over $X$ of order $p^{nh}$ for every $n\geq 1$.
\end{enumerate}
If $\mathcal{G}$ is a $p$-divisible group, its \textit{Cartier dual} $\mathcal{G}^\vee$ is defined as $\varinjlim_n \mathcal{G}[p^n]^\vee$, where the transition maps are given by the dual of the multiplication-by-$p$ map. We also denote by $\omega_{\mathcal{G}/X}$ the limit $\varprojlim_n e^*\Omega^1_{\mathcal{G}[p^n]/X}$.
\end{defi}

\begin{exam}
Here are the main examples of $p$-divisible groups.
\spa
\begin{itemize}
    \item[--] The \textit{constant} $p$-divisible group $\mathbb{Q}_p/\mathbb{Z}_p = \varinjlim_n \mathbb{Z}/p^n\mathbb{Z}$.
    
    \spa
    \item[--] The \textit{multiplicative} $p$-divisible group $\mu_{p^\infty} = \varinjlim_n \mu_{p^n}$.
    
    \spa
    \item[--] If $A$ is an abelian scheme over $X$, then $A[p^\infty] \coloneqq \varinjlim_n A[p^n]$ is a $p$-divisible group.
\end{itemize}

The Cartier dual of $\mathbb{Q}_p/\mathbb{Z}_p$ is $\mu_{p^\infty}$, and the Cartier dual of $A[p^\infty]$ is $A^\vee[p^\infty]$, where $A^\vee$ is the dual abelian scheme. 
\end{exam}

In characteristic $p$, the inductive system of non-reduced finite group schemes
\[
\mu_{p^n,\mathbb{F}_p}=\operatorname{Spec}(\mathbb{F}_p[t]/(t^{p^n}-1))
\]
is represented, on nilpotent test algebras, by the smooth formal multiplicative group
\[
\widehat{\mathbb{G}}_{m,\mathbb{F}_p}=\operatorname{Spf}(\mathbb{F}_p[[t-1]]).
\]
In this precise sense, passing from the individual $p^n$-torsion group schemes to the $p$-divisible group $\mu_{p^\infty}$ packages the entire system into a formal group. This is one of the main reasons why $p$-divisible groups are better behaved than finite flat group schemes of $p$-power order. Another main motivation to introduce $p$-divisible groups is that they control the deformation theory of abelian schemes. More precisely, we have the following result.

\begin{theo}[Serre--Tate]
Let $X$ be a scheme on which $p$ is locally nilpotent, and let $X_0 \hookrightarrow X$ be a closed immersion defined by a nilpotent ideal sheaf. Let $\calA_0$ be an abelian scheme over $X_0$. Then the natural functor
$$\left\{\text{Lifts of }\calA_0\text{ to }X\right\} \;\to\; \left\{\text{Lifts of }\calA_0[p^\infty]\text{ to }X\right\}$$
induces a canonical equivalence of categories.
\end{theo}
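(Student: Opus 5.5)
The plan is to follow Katz's proof of the Serre--Tate theorem, which rests on Drinfeld's rigidity lemma. First reduce to the case of a square-zero thickening $X_0\hookrightarrow X$ with ideal $I$: a nilpotent ideal gives a finite chain $X_0\subseteq X_1\subseteq\cdots\subseteq X_m=X$ of square-zero thickenings. If the functor is an equivalence at each step, then the composite is an equivalence as well. We may also work Zariski locally on $X$. Since the lifting categories are groupoids and both functors are compatible with gluing, we may assume $X=\Spec(R)$ is affine with $p^N=0$ and $I^{r}=0$ for some $N,r$. We keep these integers in hand rather than reducing all the way.

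The key input is the following rigidity statement. Let $G,H$ be fppf sheaves of abelian groups over $X$. Assume $G$ is $p$-divisible, $H$ is formally smooth, and the ``infinitesimal'' subsheaf $\widehat H=\Ker(H\to H_{X_0})$, where $H_{X_0}(T)=H(T\times_XX_0)$, is killed by $p^{M}$. Then the reduction map
\[\Hom_X(G,H)\to \Hom_{X_0}(G_0,H_0)\]
is injective. Moreover, every $f_0$ lifts uniquely to a ``quasi-homomorphism'' after multiplying by $p^{M}$: for any lift $\tilde f$ of $p^{M}f_0$ as a map of sheaves (not necessarily additive), the expression $p^{M}\tilde f$ is a well-defined homomorphism.

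To prove this, consider the map $\phi\mapsto p^{M}\circ\phi$ on set-theoretic lifts. Two lifts of $f_0$ differ by a map into $\widehat H$, and $p^{M}$ kills $\widehat H$. Hence $p^{M}\tilde f$ does not depend on the choice of lift. In our situation $\widehat H$ is killed by $p^{M}$ with $M=N+r$, via the formal group law of $\widehat H$: the points of $\widehat H$ are $I$-valued, and $[p](x)=px+(\text{terms of degree}\geq 2)$. Lifts exist fppf locally because abelian schemes are smooth. The homomorphism property of $p^{M}\tilde f$ follows from the fact that its failure of additivity lands in $\widehat H$ and is then killed by a further factor $p^M$; so we use $p^{2M}$ if necessary. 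Applying this with $G$ equal to $\calA[p^\infty]$ or $\calA$ (which is $p$-divisible as a sheaf), we get that homomorphisms of abelian schemes, or of $p$-divisible groups, over $X$ are exactly the morphisms over $X_0$ that remain compatible after multiplication by a high power of $p$.

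Next, define the inverse functor. A lift consists of a triple $(\calA_0,\mathcal{G},\epsilon)$ with $\epsilon\colon\mathcal{G}_{X_0}\iso\calA_0[p^\infty]$. Deformations of abelian schemes exist locally, since $\calA_0$ is smooth; choose a local lift $\calA'$. Its $p$-divisible group $\calA'[p^\infty]$ and $\mathcal{G}$ are two lifts of the same $p$-divisible group. By rigidity, the identity on $X_0$ gives a canonical quasi-isogeny $\psi\colon\calA'[p^\infty]\to\mathcal{G}$, namely $p^{-M}$ times a genuine isogeny lifting $p^M$. Set $\calA\coloneqq\calA'/\Ker(p^{M}\psi)$, modifying by the finite flat kernel. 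One checks that $\calA[p^\infty]\simeq\mathcal{G}$ compatibly with $\epsilon$. By the injectivity part of rigidity, the result is independent of the choice of $\calA'$ up to unique isomorphism, so these local constructions glue. Full faithfulness follows the same way. A morphism of lifts is a morphism over $X_0$ that lifts on the $p$-divisible groups; multiplying by $p^M$ lifts it to a homomorphism of abelian schemes, and dividing back is possible because the lift respects the kernel of $p^M$, by checking on $p^\infty$-torsion.

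The main obstacle is this last step: verifying that the quotient $\calA'/\Ker(p^{M}\psi)$ is an abelian scheme lifting $\calA_0$, with $p$-divisible group exactly $\mathcal{G}$. One must show that $\Ker(p^M\psi)$ is a finite flat subgroup of $\calA'[p^{2M}]$ that reduces to $\calA_0[p^{M}]$ over $X_0$. I would handle this by comparing both $\calA'[p^\infty]$ and $\mathcal{G}$ with the common lift $\calA_0[p^\infty]$ via $p^M$. Flatness would come from the fibrewise criterion, using that over $X_0$ the kernel is $\calA_0[p^M]$.
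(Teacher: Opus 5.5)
The paper quotes this theorem without proof, so the benchmark is the standard argument (Katz, via Drinfeld's rigidity lemma). Your architecture matches it: rigidity, then full faithfulness by lifting $p^Mf_0$ and dividing by $p^M$, then essential surjectivity by modifying an auxiliary lift by a finite flat kernel.

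The first genuine gap is the auxiliary lift itself. You say lifts of $\calA_0$ exist locally ``since $\calA_0$ is smooth''. Smoothness only gives lifts Zariski locally on the total space $\calA_0$. For a square-zero thickening of an affine base, gluing these into a smooth proper lift is obstructed by a class in $H^2(\calA_0,T_{\calA_0/X_0}\otimes I)$. That group is nonzero as soon as the relative dimension is at least $2$. Shrinking the base does not help: for $X_0=\Spec k$ and $X=\Spec W_2(k)$ there is nothing left to localise, and general smooth projective varieties need not lift there. That this obstruction vanishes for abelian schemes is a deep theorem of Grothendieck (see Illusie's cotangent complex). The standard proof imports it as a black box; without it your essential surjectivity has no $\calA'$ to start from.

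Similarly, rigidity with target $\mathcal G$ needs two facts: $p$-divisible groups are formally smooth, and $\widehat{\mathcal G}$ is the group of $I$-points of a formal Lie group, hence killed by a power of $p$. These are theorems of Grothendieck and Messing, not consequences of the smoothness of abelian schemes. Incidentally, no $p^{2M}$ is needed: the additivity defect of a pointwise lift of $f_0$ already lies in $\widehat H$.

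The step you flag as the main obstacle is also not settled by your plan. Knowing that $K\coloneqq\Ker(p^M\psi)$ restricts to $\calA_0[p^M]$ over $X_0$ gives fibres of constant order, but no flatness over the non-reduced $X$. For instance, $X_0\to X$ itself is finite, has all fibres of degree $1$, and has flat restriction to $X_0$, yet it is not flat when $I\neq 0$.

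What you need is the quasi-inverse. Put $f\coloneqq p^M\psi$, and let $\chi\colon\mathcal G\to\calA'[p^\infty]$ be the lift, given by rigidity, of $p^M$ (composed with $\epsilon$) in the opposite direction. Uniqueness of lifts gives $\chi f=p^{2M}$ and $f\chi=p^{2M}$. Consequently:
\begin{itemize}
\item $f$ is an fppf epimorphism and $K\subseteq\calA'[p^{2M}]$;
\item $f$ restricts to an fppf epimorphism $\calA'[p^{2M}]\to\Ker\chi$, which is faithfully flat on every fibre.
\end{itemize}
Since the source is flat over $X$, the fibrewise flatness criterion (EGA IV, 11.3.10) shows that this map and its target are flat. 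Then $K$, its fibre over the zero section, is finite locally free, and $\calA'[p^\infty]/K\simeq\mathcal G$. This is exactly what makes $\calA\coloneqq\calA'/K$ an abelian scheme lifting $\calA_0$ with $\calA[p^\infty]\simeq\mathcal G$ compatibly with $\epsilon$.
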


\begin{coro}
For a perfect field $k$ of characteristic $p$, the formal deformation functor of an abelian variety $A_0/k$ over $W$ is naturally isomorphic to the formal deformation functor of its $p$-divisible group $A_0[p^\infty]$ over $W$.
\end{coro}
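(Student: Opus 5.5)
The plan is to deduce the corollary from the Serre--Tate theorem by restricting it to Artinian test objects. First we fix the two functors. The formal deformation functor $\mathrm{Def}_{A_0}$ is defined on the category $\mathsf{Art}_W$ of local Artinian $W$-algebras $R$ with residue field $k$. It sends $R$ to the set of isomorphism classes of pairs $(\calA,\iota)$, where $\calA$ is an abelian scheme over $\Spec(R)$ and $\iota\colon \calA\times_R k\iso A_0$. The functor $\mathrm{Def}_{A_0[p^\infty]}$ is defined in the same way, using $p$-divisible groups over $\Spec(R)$. The map between them is $(\calA,\iota)\mapsto(\calA[p^\infty],\iota[p^\infty])$. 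This map is natural in $R$, because forming $[p^\infty]$ commutes with base change.

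Next we check the hypotheses of the Serre--Tate theorem. Let $R\in\mathsf{Art}_W$ with maximal ideal $\mathfrak m$. Since $R$ is Artinian, $\mathfrak m$ is nilpotent. Since $p\in\mathfrak m$, the element $p$ is nilpotent in $R$. We may therefore take $X=\Spec(R)$, $X_0=\Spec(k)=\Spec(R/\mathfrak m)$ and $\calA_0=A_0$. The ideal of $X_0$ in $X$ is $\mathfrak m$, which is nilpotent.

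The Serre--Tate theorem then says that the functor from lifts of $A_0$ to $X$ to lifts of $A_0[p^\infty]$ to $X$ is an equivalence of categories. Here a lift means a pair consisting of an object over $X$ and an identification of its reduction with the given object over $X_0$. An equivalence of categories induces a bijection on isomorphism classes of objects. So $\mathrm{Def}_{A_0}(R)\to\mathrm{Def}_{A_0[p^\infty]}(R)$ is a bijection for every $R$. Since these maps are natural in $R$, they assemble into a natural isomorphism of functors on $\mathsf{Art}_W$.

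Finally, we pass to complete local Noetherian $W$-algebras. There a formal deformation is, by definition, a compatible system of deformations over the quotients $R/\mathfrak m^n$. The bijection above is natural in $R$, so it commutes with these inverse limits, and the isomorphism of functors extends.

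The main obstacle is a bookkeeping point rather than a substantive one. We must make sure that the Serre--Tate equivalence is compatible with the identifications $\iota$, and that it is functorial for morphisms $R\to R'$ in $\mathsf{Art}_W$. The compatibility with $\iota$ holds because the Serre--Tate functor is literally $\calA\mapsto\calA[p^\infty]$, and this commutes with base change along $X_0\hookrightarrow X$. For the functoriality, we apply the theorem to the thickening $\Spec(R')\supseteq\Spec(k)$ and observe that base change along $\Spec(R')\to\Spec(R)$ commutes with taking $[p^\infty]$. No further input is needed beyond the Serre--Tate theorem, which the paper states before the corollary.
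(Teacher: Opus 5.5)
Your proposal is correct and is exactly the intended argument. The paper gives no separate proof and presents the corollary as an immediate consequence of the Serre--Tate theorem, applied to the nilpotent thickenings $\Spec(k)\hookrightarrow\Spec(R)$ for local Artinian $W$-algebras $R$ with residue field $k$, on which $p$ is nilpotent. Your remarks on compatibility with $\iota$, naturality in $R$, and the passage to complete local Noetherian rings are the right bookkeeping. In the last step it helps that Serre--Tate gives an equivalence of categories, not just a bijection on isomorphism classes.
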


By extending Dieudonné theory to a relative setting using crystals, we can give a more explicit description of the deformation functor of $p$-divisible groups and thus of abelian varieties. 

\begin{defi}
    Let $X$ be a scheme over $\mathbb{F}_p$, and let $\varphi\colon X\to X$ be the absolute Frobenius. A \textit{(crystalline) Dieudonné module} over $X$ is a triple $(\mathcal{M}, F, V)$ where $\mathcal{M}$ is a locally free crystal over $X$, and $F: \varphi^*\mathcal{M} \to \mathcal{M}$ and $V: \mathcal{M} \to \varphi^*\mathcal{M}$ are maps such that $FV = VF = p$. We denote by $\mathrm{DM}(X)$ the category of Dieudonné modules over $X$. There is a similar notion of a dual Dieudonné module $\mathcal{M}^\vee$ with underlying crystal the internal hom $\underline{\mathrm{Hom}}(\mathcal{M},\mathcal{O}_\mathrm{cris})$. If $c_{\mathcal M}\colon\varphi^*(\mathcal M^\vee)\xrightarrow{\sim}(\varphi^*\mathcal M)^\vee$ denotes the canonical isomorphism, then $F_{\mathcal M^\vee}\coloneqq V^\vee\circ c_{\mathcal M}$ and $V_{\mathcal M^\vee}\coloneqq c_{\mathcal M}^{-1}\circ F^\vee$.
\end{defi}
Theorem \ref{Thm:DieudonneOda} extends to a relative setting as follows.

\begin{theo}[Grothendieck, {\cite[Thm. V.1.6]{Mes72}}, \cite{BBM82}, \cite{deJ95}]\label{thm:crystallineDieudonne}
    Let $X$ be a smooth scheme over a perfect field of characteristic $p$. There is an exact equivalence 
    $$\mathbb{D}\colon \left\{\text{$p$-divisible groups over $X$} \right\}^\mathrm{op}\simeq \mathrm{DM}(X)$$ 
    satisfying the following properties.
    \begin{enumerate}
        \item $\mathcal{G}$ is of height $h$ if and only if $\mathbb{D}(\mathcal{G})$ is of rank $h$.
        
        \spa
        \item $\mathbb{D}(\mathcal{G}^\vee)=\mathbb{D}(\mathcal{G})^\vee.$
        
        \spa
        \item If $f\colon \calA\to X$ is an abelian scheme over $X$, there is a canonical isomorphism 
        $$\mathbb{D}(\calA[p^\infty])= R^1f_*\mathcal{O}_\mathrm{cris}.$$
        
        \spa
        \item For $(B\twoheadrightarrow A,\gamma)\in X_\mathrm{cris}$ and a lift $\mathcal{G}_B$ of $\mathcal{G}_A$ to $B$, there is a canonical \textit{Hodge filtration} 
        $$0\to \omega_{\mathcal{G}_B/B}\to \mathbb{D}(\mathcal{G})_B\to \omega^\vee_{\mathcal{G}_B^\vee/B}\to 0.$$
        
        \spa
        \item {For $(B\twoheadrightarrow A,\gamma)\in X_\mathrm{cris}$ whose divided powers are nilpotent}\footnote{{If $I\coloneqq \Ker(B\to A)$ and $I^{[n]}$ is generated by products $\gamma_{n_1}(x_1)\cdots\gamma_{n_r}(x_r)$ with $n_1+\cdots+n_r\geq n$, we ask that $I^{[n]}=0$ for $n\gg0$. This hypothesis cannot be omitted. At $p=2$, the canonical divided powers on $(2)\subseteq\mathbb Z/4\mathbb Z$ are not nilpotent, and classification by the lifted Hodge filtration fails for arbitrary $p$-divisible groups. Variants remain valid under connectedness hypotheses, see \cite[\S1.2.1]{VasiuST}. The unrestricted theory at $p=2$ is naturally formulated using Dieudonn\'e displays, see \cite{LauDisplays}.}}, the lifts of $\mathcal{G}_A$ to $B$ are in bijection with the lifts of the Hodge filtration of $\mathbb{D}(\mathcal{G})_A$ to $\mathbb{D}(\mathcal{G})_B$.
    \end{enumerate}
\end{theo}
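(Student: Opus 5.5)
The plan is to follow the Berthelot--Breen--Messing strategy: construct $\mathbb{D}$ via crystalline Ext-sheaves, read off the formal properties (1)--(4) from that definition, establish (5) as Grothendieck--Messing deformation theory, and prove full faithfulness and essential surjectivity last, reducing to a local statement using the smoothness of $X$.

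\textbf{Construction.} For a $p$-divisible group $\mathcal{G}$ over $X$, set
\[
\mathbb{D}(\mathcal{G})\coloneqq \underline{\mathcal{E}xt}^1_{X_{\mathrm{cris}}}(\underline{\mathcal{G}},\calO_\mathrm{cris}),
\]
where $\underline{\mathcal{G}}$ denotes the sheaf on the (big) crystalline site given by $(B\twoheadrightarrow A,\gamma)\mapsto\mathcal{G}(A)$. The Frobenius $F$ and Verschiebung $V$ of $\mathcal{G}$ induce the maps $F$ and $V$ on $\mathbb{D}(\mathcal{G})$, and $FV=VF=p$ holds because it holds on $\mathcal{G}$. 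Exactness of $\mathbb{D}$ follows from the long exact Ext sequence together with the vanishing of $\underline{\mathcal{H}om}$ and $\underline{\mathcal{E}xt}^2$ for $p$-divisible groups, in the spirit of \cite{BBM82}.

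\textbf{Properties (1)--(4).} Exactness of $\mathbb{D}$ lets us pass to the exact sequences $0\to\mathcal{G}[p]\to\mathcal{G}\xrightarrow{p}\mathcal{G}\to 0$, so local freeness and rank reduce to the rank of $\mathbb{D}(\mathcal{G}[p])$. The latter can be computed at points, where it reduces to Theorem~\ref{Thm:DieudonneOda}~(1); this gives (1). For (4), I construct the universal vector extension $E(\mathcal{G}_B)$ of $\mathcal{G}_B$ and identify $\mathbb{D}(\mathcal{G})_B$ with its Lie algebra. The Hodge sequence is then the Lie algebra of
\[
0\to \omega_{\mathcal{G}_B^\vee}\to E(\mathcal{G}_B)\to \mathcal{G}_B\to 0,
\]
up to dualising. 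Property (2) follows from Cartier duality for the truncations $\mathcal{G}[p^n]$ together with the pairing on universal extensions. For (3), the Ext description applied to $\calA[p^\infty]\subseteq\calA$ identifies $\mathbb{D}(\calA[p^\infty])$ with $\underline{\mathcal{E}xt}^1(\calA,\calO_\mathrm{cris})$, which in turn is identified with $R^1f_*\calO_\mathrm{cris}$ through the Leray spectral sequence and the primitivity of $H^1$.

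\textbf{Property (5).} This is Grothendieck--Messing theory. The nilpotence of the divided powers allows us to define a "logarithm" giving a canonical isomorphism between the deformations of $\mathcal{G}_A$ over $B$ and those of the universal extension. This isomorphism reduces the lifting problem to lifting the Hodge filtration, following \cite[Thm.~V.1.6]{Mes72}.

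\textbf{Equivalence.} This is the main obstacle. Full faithfulness and essential surjectivity are local, so we may assume $X=\Spec(A)$ with a finite $p$-basis and a lift $\tilde{A}$. By Theorem~\ref{Thm:CrystalsAsConnections}, a Dieudonné module is then the datum of a module with topologically quasi-nilpotent connection, together with $F$ and $V$. For full faithfulness I first prove it over perfect fields, using Theorem~\ref{Thm:DieudonneOda} and passing to the limit over $n$. I then spread it out to $X$, using that $X$ is reduced and that morphisms of $p$-divisible groups are determined on a dense set of points, in the manner of \cite{deJ95}. For essential surjectivity over complete local rings $\mathcal{O}_{X,x}^\wedge$, I plan to construct the $p$-divisible group by successive lifting along PD-thickenings using (5), then algebraise the result and descend back to $X$. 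The algebraisation and descent step I expect to be the hardest, and I would follow de~Jong's approach, which passes through formal schemes and uses Tate's full faithfulness.
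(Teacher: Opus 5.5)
The paper does not prove this theorem. It quotes it, with (5) taken from Messing \cite[Thm.~V.1.6]{Mes72}, the construction and (1)--(4) from \cite{BBM82}, and the equivalence over smooth bases from \cite{deJ95}. Your outline has the same architecture. As a citation-level sketch, your treatment of the construction and of properties (1)--(5) is acceptable. For (1) it is cleaner to read the rank off (4), namely $\dim\omega_{\mathcal{G}}+\dim\omega_{\mathcal{G}^\vee}=h$, than to go through $\mathcal{G}[p]$, which is not a $p$-divisible group. The genuine gap is in the equivalence step.

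\textbf{Full faithfulness.} Your argument only proves faithfulness. Let $\phi\colon\mathbb{D}(\mathcal{H})\to\mathbb{D}(\mathcal{G})$ be a morphism. The perfect-field case gives a unique $u_x\colon\mathcal{G}_x\to\mathcal{H}_x$ at each closed point $x$. Reducedness of $X$ and density of closed points then show that at most one $u$ over $X$ induces all of them, but nothing in your argument produces such a $u$.

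This cannot be repaired by gluing the $u_x$ more cleverly. The fibres of $\phi$ at perfect points do not see the connection, and the connection is what decides whether a global morphism exists. Example~\ref{exam:FamiliesOrdinary} shows this. Take an ordinary family with non-constant Serre--Tate parameters. At every closed point $x$ the connected--\'etale sequence of $\mathcal{G}_x$ splits, and the retraction $\mathcal{G}_x\to\mathcal{G}^\circ_x$ is unique because $\mathrm{Hom}(\mathcal{G}^{\text{\'et}}_x,\mathcal{G}^\circ_x)=0$; likewise $\mathbb{D}(\mathcal{G})_x$ splits uniquely. Yet neither $\mathcal{G}$ nor $\mathbb{D}(\mathcal{G})$ splits over $X$. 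So uniquely determined pointwise morphisms need not come from a global one. An argument that never uses the horizontality of $\phi$ would wrongly produce a global retraction here.

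What is needed is to construct $u$ over a base where the connection is visible, such as the generic point of $X$ or $\widehat{\calO}_{X,x}$, using that $\phi$ is compatible with the connection, and then to extend $u$ over the normal scheme $X$. This is the heart of \cite{deJ95}, carried out with formal and rigid geometry, and your proposal does not contain it.

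\textbf{Essential surjectivity.} Your sketch has the same problem in another form. Applying Theorem~\ref{thm:crystallineDieudonne}(5) to $\widehat{\calO}_{X,x}/\mathfrak{m}^{n+1}\twoheadrightarrow\widehat{\calO}_{X,x}/\mathfrak{m}^{n}$ only identifies $\mathbb{D}$ of a lift with the given crystal on PD-thickenings of the previous stage. It does not tell you which lift of the Hodge filtration to choose. Nor does it show that the resulting group $\mathcal{G}$ over $\Spec\widehat{\calO}_{X,x}$ satisfies $\mathbb{D}(\mathcal{G})\cong\mathcal{M}$. That requires comparing Dieudonn\'e crystals on $\Spec\widehat{\calO}_{X,x}$ with compatible systems on its truncations, which is not formal (compare Appendix~\ref{App:IsocrystalsFormalSchemes}). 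Finally, the passage from $\widehat{\calO}_{X,x}$ back to $X$ is not descent along a covering. It is an algebraisation and spreading-out argument, and it relies on the full faithfulness you have not yet established.
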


\begin{coro}
    Let $\mathcal{G}$ be a $p$-divisible group over a perfect field $k$ of characteristic $p$ of height $h$ and dimension {$d\coloneqq\dim_k\omega_{\mathcal G/k}$}. Then the formal deformation functor $\mathrm{Def}(\mathcal{G}/W(k))$ is representable by a smooth formal scheme over $W(k)$ of dimension $d(h-d)$. In other words, $\mathrm{Def}(\mathcal{G}/W(k))$ is non-canonically isomorphic to $\operatorname{Spf}(W(k)[[x_1,\ldots,x_{d(h-d)}]])$.
\end{coro}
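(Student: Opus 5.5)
To prove the Corollary I would use Theorem \ref{thm:crystallineDieudonne}(5): it identifies deformations of $\mathcal{G}$ with lifts of the Hodge filtration inside the Dieudonné crystal. The steps are to describe $\mathrm{Def}(\mathcal{G}/W(k))$ on Artinian test objects, show it is formally smooth with the right tangent space, and then apply Schlessinger's criterion.

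Put $M\coloneqq\mathbb{D}(\mathcal{G})_{W(k)}$. By Theorem \ref{thm:crystallineDieudonne}(1) it is free of rank $h$ over $W(k)$. By part (4), its reduction $M\otimes k=\mathbb{D}(\mathcal{G})_k$ carries the Hodge filtration $\omega_{\mathcal{G}/k}\subseteq M\otimes k$, which is a $k$-subspace of dimension $d$. Now take a small extension $R'\twoheadrightarrow R$ of local Artinian $W(k)$-algebras with residue field $k$, whose kernel $I$ satisfies $\mathfrak m_{R'}I=0$, and a deformation $\mathcal{G}_R$.

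The key step is to equip $(R'\twoheadrightarrow R)$ with nilpotent divided powers lying in the crystalline site. The first idea is to use the trivial PD-structure on $I$, which has $I^2=0$. The compatibility $\gamma_n(p)=p^n/n!$ is the delicate point, especially at $p=2$ (see the footnote to part (5)). To get around it, I would instead factor through extensions of the form $W_n(k)\to$ Artinian rings, or use the standard trick from \cite{Mes72}: first treat $R'=R\oplus I$ and then use that $\mathfrak m_{R'}I=0$ forces $pI=0$. This PD bookkeeping is where I expect the real work to lie, and I would rely on Messing's treatment here.

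Granting this, part (5) identifies the lifts of $\mathcal{G}_R$ to $R'$ with the lifts of the rank-$d$ direct summand $\omega_{\mathcal{G}_R}\subseteq M\otimes R$ to a direct summand of $M\otimes R'$. This is because the crystal property gives $\mathbb{D}(\mathcal{G})_{R'}=M\otimes_{W(k)}R'$.

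Direct summands of rank $d$ of a free module are points of the Grassmannian $\mathrm{Gr}(d,h)$, which is smooth over $W(k)$ of relative dimension $d(h-d)$. Hence such lifts always exist, so the functor is formally smooth. The same argument shows they form a torsor under
\[
\mathrm{Hom}_k(\omega_{\mathcal{G}/k},(M\otimes k)/\omega_{\mathcal{G}/k})\otimes_k I\cong \mathrm{Hom}_k(\omega_{\mathcal{G}/k},\omega^\vee_{\mathcal{G}^\vee/k})\otimes_k I.
\]
Taking $R'=k[\epsilon]$ and $R=k$ shows that the tangent space has dimension $d(h-d)$.

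Finally, the Schlessinger conditions (H1)–(H4) follow from the same bijection, since the Grassmannian deformation functor is pro-representable. The bijection is also functorial in morphisms of PD-thickenings, so it is compatible with fibre products. It follows that $\mathrm{Def}(\mathcal{G}/W(k))$ is pro-representable by a complete local noetherian $W(k)$-algebra. That algebra is formally smooth with a tangent space of dimension $d(h-d)$, so it is isomorphic to $W(k)[[x_1,\dots,x_{d(h-d)}]]$.

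As an alternative, one can identify the functor directly with the formal completion of $\mathrm{Gr}(d,h)_{W(k)}$ at the point $[\omega_{\mathcal{G}/k}]$. This identification is non-canonical, because it depends on the choice of lifts at each stage.
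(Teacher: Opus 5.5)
Your strategy is the paper's: apply Theorem~\ref{thm:crystallineDieudonne}(5) to square-zero divided-power thickenings, induct on length, and compare with $\mathrm{Gr}(d,h)$. Two steps, however, fail as written.

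First, the crystal property does not give $\mathbb{D}(\mathcal{G})_{R'}=M\otimes_{W(k)}R'$. For a general Artinian $R'$ the surjection $R'\twoheadrightarrow k$ need not carry any divided powers: in $k[x]/(x^{p+1})$ one has $x^p\neq0$, whereas $x^p=p!\,\gamma_p(x)=0$ in any PD-ideal of an $\mathbb{F}_p$-algebra. So that module is not defined, and there is no canonical $\mathbb{D}(\mathcal{G}_R)_R\cong M\otimes R$ in which to place $\omega_{\mathcal{G}_R}$. What Grothendieck--Messing actually uses is the crystal of the deformation $\mathcal{G}_R$ evaluated on $(R'\twoheadrightarrow R)$. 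This is a free $R'$-module of rank $h$ reducing to $\mathbb{D}(\mathcal{G}_R)_R\supseteq\omega_{\mathcal{G}_R}$ and to $\mathbb{D}(\mathcal{G})_k\supseteq\omega_{\mathcal{G}/k}$. That suffices for existence of lifts and for the torsor under $\mathrm{Hom}_k(\omega_{\mathcal{G}/k},\omega^\vee_{\mathcal{G}^\vee/k})\otimes_k I$. But these bijections do not come from a morphism of functors $\mathrm{Def}(\mathcal{G})\to\widehat{\mathrm{Gr}}$. Consequently (H1)--(H4) cannot be inherited from the Grassmannian, and your ``direct identification'' is only an abstract isomorphism of power series rings. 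Check Schlessinger's conditions directly instead: glue finite locally free group schemes over fibre products, and get (H4) from rigidity. A homomorphism of $p$-divisible groups over an Artinian base that vanishes on the special fibre is killed by a power of $p$, hence is zero. This is presumably why the paper only says ``the same infinitesimal lifting theory''.

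Second, the divided powers. For $p$ odd no trick is needed. If $\mathfrak{m}_{R'}I=0$, the trivial divided powers on $I$ satisfy $I^{[2]}=0$ and are compatible with those on $(p)$. Indeed, take $py\in I$. If $y\in\mathfrak{m}_{R'}$, then $\tfrac{p^n}{n!}y^n=\tfrac{p^{n-1}}{n!}(py)y^{n-1}$ lies in $\mathfrak{m}_{R'}I=0$. If $y$ is a unit, then $p\in I$, so $p^2=0$, and $v_p(p^n/n!)\geq2$ for $n\geq2$.

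For $p=2$ your proposed fix misses the obstruction: the problem is not $pI$, which vanishes anyway, but $2\in I$. At $W_2(k)\twoheadrightarrow k$, and at some step of any tower of small extensions reaching a ring with $2\neq0$, compatibility forces $\gamma_{2^m}(2)=2^{2^m}/(2^m)!$, which is $2$ times a unit. These divided powers are never nilpotent, so Theorem~\ref{thm:crystallineDieudonne}(5) does not apply; this is exactly the example in its footnote. Trivial extensions $R\oplus I$ are harmless and do compute the tangent space, but formal smoothness needs lifts along non-split extensions. At $p=2$ you therefore need extra input, such as Illusie's unobstructedness theorem for Barsotti--Tate groups or Dieudonn\'e displays \cite{LauDisplays}. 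The paper's short sketch passes over the same point.
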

\begin{proof}
    {Apply Theorem \ref{thm:crystallineDieudonne}(5) successively to square-zero divided-power thickenings. The usual induction on the length of an Artinian base shows that deforming $\mathcal G$ has the same infinitesimal lifting theory as deforming its Hodge filtration. The latter is the formal deformation functor of the point defined by the Hodge filtration in $\mathrm{Gr}(d,h)_W$.} This is a smooth formal scheme over $W$ of dimension $d(h-d)$.
\end{proof}

\begin{exam}[Families of ordinary elliptic curves]\label{exam:FamiliesOrdinary}
    Let $X$ be a smooth connected scheme over a perfect field $k$ of characteristic $p$ and let $f\colon \calE\to X$ be a family of ordinary elliptic curves, meaning that for every closed point $x\in X$, the fiber $\calE_x$ is an ordinary elliptic curve. Then the $p$-divisible group $\mathcal{G}=\calE[p^\infty]$ sits in the exact sequence
    $$
    0 \to \mathcal{G}^\circ \to \mathcal{G} \to \mathcal{G}^{\text{\'et}} \to 0,
    $$
    where $\mathcal{G}^\circ$ is the connected component of $\mathcal{G}$ and $\mathcal{G}^{\text{\'et}}$ is the maximal \'etale quotient. Both $\mathcal{G}^\circ$ and $\mathcal{G}^{\text{\'et}}$ are $p$-divisible groups of height $1$. The former is isomorphic to $\mu_{p^\infty}$ when base changed to every $\bar{k}$-point of $X$, while the latter is pointwise isomorphic to $\mathbb{Q}_p/\mathbb{Z}_p$. The Dieudonné module $\mathbb{D}(\mathcal{G})$ can be then written as an extension 
    $$
    0 \to \mathbb{D}(\mathcal{G}^{\text{\'et}}) \to \mathbb{D}(\mathcal{G}) \to \mathbb{D}(\mathcal{G}^\circ) \to 0,
    $$
    which is a $p$-adic lift of the conjugate filtration on $H^1_{\mathrm{dR}}(\calE/X)$ when $X$ is affine. {If the Serre--Tate deformation parameters of the family are non-constant, then this extension is non-trivial as an extension of Dieudonné crystals. Indeed, an $F$- and $V$-compatible splitting would, by Theorem~\ref{thm:crystallineDieudonne}, split the connected--\'etale extension of the $p$-divisible group, whereas the Serre--Tate parameter measures precisely this extension class}\footnote{{An abelian scheme $\calA\to X$ is \emph{isotrivial} if, after a finite \'etale cover $X'\to X$, its pullback becomes constant.}}.
\end{exam}

Example \ref{exam:FamiliesOrdinary} highlights a difference with the $\ell$-adic world, where $R^1f_*\mathbb{Q}_\ell$ for a non-isotrivial family of ordinary elliptic curves is irreducible. More generally, we have the following fundamental result on the structure of $\ell$-adic cohomology of smooth proper morphisms over $\overline{\mathbb{F}}_p$.

\begin{theo}[Deligne's semi-simplicity theorem, {\cite[Cor. 3.4.13]{Del80}}]\label{thm:Semisimplicity}
    For any smooth proper morphism $f\colon Y \to X$ of smooth schemes over $\overline{\mathbb{F}}_p$, the lisse $\mathbb{Q}_\ell$-sheaves $R^if_*\mathbb{Q}_\ell$ are semisimple.
\end{theo}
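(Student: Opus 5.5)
The plan follows Deligne's argument in Weil II.

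\emph{Reductions.} First I spread out. The morphism $f\colon Y\to X$ and the schemes involved are of finite type over $\overline{\mathbb{F}}_p$, so there is a finite field $\mathbb{F}_q$ and a smooth proper morphism $f_0\colon Y_0\to X_0$ of smooth $\mathbb{F}_q$-schemes whose base change to $\overline{\mathbb{F}}_p$ is $f$. Semisimplicity may be checked on each connected component of $X$, so I may assume $X$ is connected. Then $\mathcal{F}_0\coloneqq R^if_{0*}\mathbb{Q}_\ell$ is a lisse sheaf on $X_0$ whose pullback to $X$ is $\mathcal{F}=R^if_*\mathbb{Q}_\ell$. The proof then has two steps: show that $\mathcal{F}_0$ is pure of weight $i$, and show that the geometric restriction of a pure lisse sheaf is semisimple.

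\emph{Purity.} By proper base change, the stalk of $\mathcal{F}_0$ at a closed point $x\in X_0$ is $H^i(Y_{\bar x},\mathbb{Q}_\ell)$, where $Y_x$ is a smooth proper variety over $\kappa(x)$. By Deligne's Weil I theorem, the eigenvalues of $\mathrm{Frob}_x$ on this stalk are pure of weight $i$, for every $x$. Hence $\mathcal{F}_0$ is pointwise pure of weight $i$.

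\emph{Geometric semisimplicity.} Let $\mathcal{F}'\subseteq \mathcal{F}$ be the maximal geometrically semisimple subsheaf, namely the socle of the $\pi_1(X)$-representation. It is stable under $\pi_1(X_0)$, because $\pi_1(X)$ is normal in $\pi_1(X_0)$. Repeating the construction, I obtain a $\pi_1(X_0)$-stable filtration of $\mathcal{F}_0$ whose graded pieces are geometrically semisimple. It then suffices to show that every extension
\[
0\to \mathcal{A}_0\to \mathcal{E}_0\to \mathcal{B}_0\to 0
\]
of pure lisse sheaves of the same weight on $X_0$ splits after restriction to $X$. The obstruction class lies in
\[
H^1\bigl(X,\underline{\mathrm{Hom}}(\mathcal{B},\mathcal{A})\bigr)^{\mathrm{Frob}}.
\]
The coefficient sheaf $\underline{\mathrm{Hom}}(\mathcal{B}_0,\mathcal{A}_0)$ is pure of weight $0$. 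By the main theorem of Weil II, $H^1$ of a sheaf of weight $0$ has weights $\geq 1$, so Frobenius has no invariants on it. More precisely, the class is Frobenius-invariant only up to the twist carried by the Galois action, and it has to be killed using the weight inequality. This yields the splitting over $X$, and induction on the length of the filtration shows that $\mathcal{F}$ is semisimple.

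\emph{Main obstacle.} The hard step is the weight bound $w\geq 1$ for $H^1$ of a pure weight-$0$ sheaf on $X$. This is the core of Weil II (Deligne's Theorem 3.3.1). I would cite it rather than reprove it, since its proof requires the full machinery of Rankin--Selberg squaring and Lefschetz pencils. Given that bound and Weil I, the rest of the argument is formal bookkeeping with Frobenius invariants.
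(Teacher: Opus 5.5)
Your argument is Deligne's own proof from Weil~II, \S3.4: purity of $R^if_{0*}\mathbb{Q}_\ell$, then the $\pi_1(X_0)$-stable socle, then vanishing of Frobenius invariants on $H^1(X,\mathcal{H}om(\mathcal{B},\mathcal{A}))$ for weight reasons. That is exactly the result the paper cites without proof, so your approach is the right one.

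There are two small corrections and one clarification.

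\emph{Purity of the fibres.} Weil~I only treats smooth \emph{projective} varieties, so it does not cover the smooth proper fibres $Y_x$. You should instead use the purity of $H^i$ for proper smooth varieties from Weil~II itself, which is the main theorem combined with Poincar\'e duality. Alternatively, reduce to Weil~I via a de~Jong alteration, since pullback along an alteration is injective on $\ell$-adic cohomology by the trace argument.

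\emph{The bound on $H^1$.} The statement that $H^1$ of a pure weight-$0$ lisse sheaf has weights $\geq 1$ does not follow from Theorem~3.3.1 alone. It needs that theorem together with Poincar\'e duality $H^1(X,\mathcal{G})\simeq H^{2d-1}_c(X,\mathcal{G}^\vee)^\vee(-d)$ on the smooth connected $d$-dimensional $X$.

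\emph{The ``twist'' remark.} No twist is involved. By inflation--restriction, the image on $X$ of an extension class on $X_0$ is genuinely invariant under $\mathrm{Gal}(\overline{\mathbb{F}}_p/\mathbb{F}_q)$. An invariant class would sit in the eigenvalue-$1$ part, which has weight $0<1$, so the class vanishes and the extension splits over $X$.
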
 
Thanks to the parabolicity theorem, we now have a way to determine precisely the relation between the behavior of $\mathbb{D}(\mathcal{G})$ and $R^1f_*\mathbb{Q}_\ell$ for {an abelian scheme over $X$ whose Newton polygon is constant, using also Theorem~\ref{thm:Independence}.} To state the theorem we need to introduce the notion of the \textit{slope filtration} of an \textit{$F$-isocrystal}, \textit{overconvergent $F$-isocrystal}, and \textit{monodromy groups}.

\subsection{\texorpdfstring{$F$}{F}-isocrystals and the slope filtration}

In the $\ell$-adic setting, the category of lisse $\mathbb{Q}_\ell$-sheaves is a monoidal rigid $\mathbb{Q}_\ell$-linear category. When the base is connected, it also admits a fibre functor given by the stalk at a geometric point. Thus it is a \textit{neutral Tannakian category}.

\spa

The category of crystals over any base is a monoidal category, and the Frobenius endofunctor $F^*$ is monoidal. Nonetheless, the category of Dieudonné modules is not a monoidal category as the structure maps $F$ and $V$ are not naturally defined on a tensor product in a way that preserves the condition $FV = p$. However, if we forget the $V$-structure, we get a monoidal category of \textit{$F$-crystals}, which are crystals equipped with a Frobenius structure $\Phi\colon F^*\mathcal{M}\to \mathcal{M}$ that becomes an isomorphism after inverting $p$. In order to have an abelian category and to have a notion of duals, we need to invert $p$. This leads to the following definition.

\begin{defi}
    Let $X$ be a scheme over $\mathbb{F}_p$. We write $\mathrm{Isoc}(X)$ for the category of finitely presented crystals up to isogeny. An \textit{$F$-isocrystal} over $X$ is an isocrystal $\mathcal{M}$ equipped with an isomorphism $\Phi\colon F^*\mathcal{M} \xrightarrow{\sim} \mathcal{M}$. We denote by $\Fisoc(X)$ the category of $F$-isocrystals over $X$.
\end{defi}

\begin{prop}
    If $X$ is a smooth variety over a perfect field $k$ of characteristic $p$, then the categories $\mathrm{Isoc}(X)$ and $\Fisoc(X)$ are rigid monoidal abelian categories. In addition, $\mathrm{Isoc}(X)$ is $K$-linear and $\Fisoc(X)$ is a $\Qp$-linear category. If $X$ is {geometrically connected}, then $\mathrm{Isoc}(X)$ and $\Fisoc(X)$ are Tannakian categories over $K$ and $\Qp$, respectively. 
\end{prop}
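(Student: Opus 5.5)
Locally on $X$ I would use Theorem~\ref{Thm:CrystalsAsConnections} to replace crystals by topologically quasi-nilpotent flat connections. Choose a Zariski cover of $X$ by affines $\Spec(A_i)$ with $\Omega^1_{A_i/k}$ free, and choose $p$-complete flat lifts $\tilde A_i$ over $W$. After inverting $p$, an isocrystal on $\Spec(A_i)$ becomes a finitely generated $\tilde A_i[1/p]$-module with a flat connection satisfying the convergence condition. Because $\tilde A_i[1/p]$ is noetherian and regular, such a module is automatically projective: a coherent module with connection in characteristic $0$ is locally free. On overlaps, descent data come from the crystal property.

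\textbf{Abelian structure.} Kernels and cokernels of horizontal maps are again modules with flat connection, and the quasi-nilpotence condition passes to subquotients. One must also check that the subquotients come from crystals after scaling by a power of $p$, i.e.\ that they admit lattices stable under the connection. Take the image of a lattice in the cokernel, and its intersection with the kernel; finite generation holds because the rings are noetherian. This gives abelianness of $\mathrm{Isoc}(X)$, which is $K$-linear since $W$ acts. For $\Fisoc(X)$, kernels and cokernels inherit Frobenius structures because $F^*$ is exact. Exactness of $F^*$ is checked locally via a Frobenius lift on $\tilde A_i$, since such a lift is flat for a regular base. The category $\Fisoc(X)$ is only $\Qp$-linear because $\Phi$ is $\sigma$-semilinear relative to $K$.

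\textbf{Rigid monoidal structure.} The tensor product is $\calM\otimes_{\calO_\mathrm{cris}}\calN$. Duals are $\underline{\Hom}(\calM,\calO_\mathrm{cris})[1/p]$, which is well behaved precisely because isocrystals are locally projective. Frobenius structures transfer via $F^*(\calM\otimes\calN)=F^*\calM\otimes F^*\calN$, and for duals via the inverse transpose of $\Phi$, which needs $\Phi$ to be an isomorphism. This explains why one inverts $p$ and requires $\Phi$ invertible.

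\textbf{Tannakian structure.} For $X$ geometrically connected, the plan is as follows.
\begin{itemize}
\item First compute $\mathrm{End}(\mathbf 1)$. Horizontal sections of $\calO$ on the $\tilde A_i[1/p]$ are constants of $K$, using connectedness and characteristic $0$. For $\Fisoc(X)$ one additionally imposes $\sigma$-invariance, which gives $\Qp$.
\item Next, to get a fibre functor, first work over a point. If $X(k)\neq\emptyset$, pulling back to a rational point $x$ gives a functor to finite-dimensional $K$-vector spaces. This functor is exact and faithful: a horizontal map vanishing at a point vanishes on a connected smooth scheme. That handles $\mathrm{Isoc}(X)$ as neutral, when a point exists. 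For $\Fisoc(X)$ one uses the forgetful functor to $\mathrm{Isoc}(X)$ composed with the fiber, which gives a fibre functor over $K$, not over $\Qp$.
\item Finally, apply Deligne's criterion: an abelian rigid tensor category with $\mathrm{End}(\mathbf 1)$ a field and a fibre functor over an extension of that field is Tannakian. In general this means using Deligne's theorem that it suffices for $\mathrm{rank}$, i.e.\ trace of identity, to be a nonnegative integer for all objects. The rank here is the rank of the locally free module.
\end{itemize}

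\textbf{Main obstacle.} I expect the main difficulty to be the local freeness and lattice-stability step. Showing that a coherent $\tilde A[1/p]$-module with convergent connection is projective, and that kernels admit crystal lattices, requires care, since $\tilde A[1/p]$ is not a ring of functions on a smooth rigid space in a naive sense. I would first try the standard argument: Fitting ideals are stable under derivations and hence are trivial or the unit ideal. Alternatively, I would cite Berthelot's results.
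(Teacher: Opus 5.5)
Your sketch is correct. The paper states this proposition without proof, so there is no competing argument to match. Its only indication is the remark that follows: evaluation at a perfect point $x$ gives a fibre functor with values in $K_x=W(\kappa(x))[1/p]$-vector spaces, and $\Fisoc(X)$ is in general not neutral over $\Qp$. Your local reduction is the standard route: connections on lifts $\tilde A_i$, local freeness over $\tilde A_i[1/p]$ via derivation-stable Fitting ideals, crystal lattices for kernels and cokernels, and exactness of $F^*$ through a flat Frobenius lift.

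\textbf{Fibre functor.} This is where the paper's remark streamlines your argument. You do not need the case distinction $X(k)\neq\emptyset$, nor Deligne's dimension criterion. Any closed point $x$ has residue field $k_x$ finite over $k$, hence perfect. Your exactness and faithfulness argument applies verbatim to evaluation at $x$ and gives a fibre functor over the field $K_x\supseteq K$. By definition this already makes $\mathrm{Isoc}(X)$ Tannakian over $K$. Composing with the forgetful functor does the same for $\Fisoc(X)$ over $\Qp$. Deligne's Theorem~7.1 also works here, since we are in characteristic $0$, but it is much heavier.

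\textbf{Two points of justification.}
\begin{enumerate}
\item Connectedness and characteristic $0$ alone only show that the horizontal sections of $\mathbf 1$ in $\mathrm{Isoc}(X)$ form $W(k')[1/p]$, where $k'$ is the algebraic closure of $k$ in $\Gamma(X,\calO_X)$; think of $X=\Spec k'$. Geometric connectedness is exactly what forces $k'=k$, hence $\mathrm{End}(\mathbf 1)=K$. For $\Fisoc(X)$, taking $\sigma$-invariants gives $\Qp$ in any case.
\item Locally, objects of $\mathrm{Isoc}(X)$ are characterised by having a topologically quasi-nilpotent lattice, not by convergence of the Taylor series on full residue discs. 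Crystals up to isogeny form a larger category than Berthelot's convergent isocrystals; for example, $\nabla e=px^{p-1}e\,dx$ on $\mathbb A^1$ is a crystal that is not convergent. The two notions agree once a Frobenius structure is present. Since you work with lattices throughout, this is only a matter of wording.
\end{enumerate}

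\textbf{Two checks in the lattice argument.} Quasi-nilpotence of the kernel lattice $L\cap\ker$ needs Artin--Rees. The dual lattice $M^\vee$ is quasi-nilpotent because $\partial^p\equiv 0 \bmod p$ on $\tilde A$.
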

\begin{rema}
	The evaluation at a perfect point $x$ gives a fibre functor for $\mathrm{Isoc}(X)$ after extending scalars to $K_x=W(\kappa(x))[1/p]$. The category $\Fisoc(X)$, instead is in general non-neutral over $\Qp$.
\end{rema}

When $k$ is a perfect field, the category $\mathrm{Isoc}(k)$ is equivalent to the category of finite-dimensional $K$-vector spaces. Consequently, an object of $\Fisoc(k)$ corresponds to a finite-dimensional $K$-vector space $V$ equipped with an isomorphism $\Phi\colon \sigma^*V \xrightarrow{\sim} V$, where $\sigma$ is the Frobenius automorphism of $K$. If $k$ is also assumed algebraically closed, one has the following fundamental diagonalisation result for $F$-isocrystals.

\begin{theo}[Dieudonné--Manin]
    If $k$ is algebraically closed, then every $F$-isocrystal $(M, \Phi)$ over $\operatorname{Spec}(k)$ decomposes as 
    $$M = \bigoplus_{\alpha\in \mathbb{Q}} M_{\alpha}$$ 
    where $M_{\alpha}$ denotes the $K$-linear subspace generated by vectors $v\in M$ such that $\Phi^r(v) = p^s v$ for $r>0$, $s\in \Z$, and $s/r=\alpha$. In addition, $M_\alpha$ decomposes as a direct sum of $F$-isocrystals isomorphic to $K\{F\}/(F^r-p^s)$ with Frobenius given by $F\cdot a=\sigma(a)\cdot F$, {where $r>0$ and $s$ are chosen with $\gcd(r,s)=1$.} 
\end{theo}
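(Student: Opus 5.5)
We plan to follow Manin's approach: first reduce to a single indecomposable building block, then classify those. The decomposition into slope parts will come from a canonical construction. The splitting of each slope part into standard pieces $E_{s/r}\coloneqq K\{F\}/(F^r-p^s)$ will come from an explicit normal form.

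First, fix $(M,\Phi)$ with $\dim_K M=n$. We choose a $W$-lattice $L\subseteq M$ and replace it by $L+\Phi(\sigma^*L)+\dots$ or an analogous saturation, so that we may work with a finitely generated $W\{F\}$-module. The ring $K\{F\}$ is a (noncommutative) left and right principal ideal domain, since it has a degree function with Euclidean division using $\sigma$-twists. Hence $M$, viewed as a left $K\{F\}$-module of finite $K$-dimension, is a finite direct sum of cyclic modules $K\{F\}/K\{F\}P$. Here $\Phi$ bijective means the constant term of $P$ is nonzero.

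Second, we attach to each such $P=\sum a_iF^i$ its Newton polygon, using the valuations $v_p(a_i)$. We prove a factorisation lemma, which is the analogue of Hensel's lemma and of the usual factorisation of polynomials over a complete field along slopes of the Newton polygon. It says that $P$ factors as a product of polynomials each having a single slope, and that factors of distinct slopes can be reordered. From this we deduce that $M=\bigoplus_\alpha M_\alpha$, where $M_\alpha$ is characterised intrinsically as the largest sub-object admitting a lattice $L$ with $\Phi^r(\sigma^{r*}L)=p^sL$. This intrinsic description shows that the decomposition is canonical. It also shows that it agrees with the span of vectors with $\Phi^r v=p^sv$ once the next step is done.

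Third, we treat a single slope $\alpha=s/r$ with $\gcd(r,s)=1$. Replacing $\Phi$ by $p^{-s}\Phi^r$ reduces to the slope $0$ case for $\Phi^r$ over $K$, still a $\sigma^r$-semilinear operator. For slope $0$, there is a lattice $L$ with $\Phi L=L$. Reducing mod $p$ gives a $\sigma$-semilinear bijection of $L/pL$ over $\kbar$. Lang's theorem, i.e.\ the surjectivity of $g\mapsto g^{-1}\sigma(g)$ on $\GL_n(k)$ for $k$ algebraically closed, produces a basis of fixed vectors mod $p$. Successive approximation mod $p^m$, again by Lang/Artin--Schreier solvability, lifts this to a fixed basis of $L$. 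This proves that slope-$0$ objects are trivial, $M\simeq (K,\sigma)^n$. The same argument applied to $\Phi^r p^{-s}$ yields a basis of vectors with $\Phi^rv=p^sv$. Each such $v$ generates a quotient of $K\{F\}/(F^r-p^s)$, which is simple because $\gcd(r,s)=1$; a slope count shows it has dimension $r$. A standard semisimplicity argument then writes $M_\alpha$ as a direct sum of copies of $E_{s/r}$.

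The main obstacle is the factorisation lemma of the second step in the skew polynomial ring. It requires a careful Newton-polygon Hensel argument with $\sigma$-twisted coefficients and completeness of $W$. A backup is Katz's argument via lattices, using the ``slope'' of $\Phi$ relative to the Hodge polygon. The Lang-theorem step in the third step is routine once algebraic closedness of $k$ is used.
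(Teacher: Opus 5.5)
The notes quote this theorem as classical background and give no proof, so there is no argument in the paper to compare yours with. Your plan is essentially Manin's original proof, and it is sound in outline. The elementary-divisor theorem for the twisted Euclidean ring $K\{F\}$ gives the cyclic decomposition. The Hensel-type factorisation of twisted polynomials along the Newton polygon does hold: $\sigma$ is an isometry of $K$, so the Gauss-norm argument carries over. It can also be run on either side, by passing to the opposite ring, which is twisted by $\sigma^{-1}$, and this is what gives your reordering. Lang's theorem with successive approximation then trivialises the isoclinic pieces. Compared with lattice-theoretic arguments such as the Katz-style backup you mention, your route gives explicit normal forms. The price is that all the analytic work sits in the factorisation lemma, which you still have to write out.

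Three points need repair.

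First, the saturation $L+\Phi(\sigma^*L)+\cdots$ in your first step is not a lattice once negative slopes occur (take $M=K$ with $\Phi=p^{-1}\sigma$). Simply drop it, since you never use a $W\{F\}$-structure.

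Second, the existence of a lattice with $p^{-s}\Phi^rL=L$ on each isoclinic summand is not a formal consequence of the factorisation, and everything after the factorisation depends on it. You must construct it. Take $P=\sum_{i\le d}a_iF^i$ monic of pure slope $\lambda=s/r$, meaning $v_p(a_i)\ge(d-i)\lambda$ and $v_p(a_0)=d\lambda$ (the normalisation for which $F^r-p^s$ has slope $s/r$). On $K\{F\}/K\{F\}P$ put $w\bigl(\sum_{i<d}c_iF^i\bigr)\coloneqq\min_i\bigl(v_p(c_i)+i\lambda\bigr)$. Reducing $F^d$ modulo $P$ gives $w(Fx)\ge w(x)+\lambda$. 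Hence $L\coloneqq\{w\ge0\}$ satisfies $p^{-s}F^rL\subseteq L$, with equality because the determinant of $p^{-s}F^r$ on $L$ has valuation $r\,v_p(a_0)-sd=0$.

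With this lattice in hand, two facts follow. A sub-object $N$ inherits the lattice $L\cap N$ with the same property. Two commensurable lattices of slopes $\lambda\neq\mu$ on the same non-zero object contradict each other under high powers of $\Phi$. Together these give:
\begin{enumerate}
\item the simplicity of $E_{s/r}$, since a sub-object of dimension $e$ has $r\,v_p(\det\Phi)=se$, hence $r\mid e$;
\item the vanishing of the intersection of the two complementary submodules produced by your reordering;
\item the equality of your intrinsic $M_\alpha$ with the span of vectors satisfying $\Phi^{r'}v=p^{s'}v$, $s'/r'=\alpha$, as required by the statement.
\end{enumerate}

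Third, modulo $p$ the operator $p^{-s}\Phi^r$ is $\sigma^r$-semilinear, not $\sigma$-semilinear, so Lang's theorem is applied to $\sigma^r$. This is harmless since $k$ is algebraically closed.
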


\begin{defi}[Slope]
    For an algebraically closed field $k$ of characteristic $p$, the set of \textit{slopes} of an $F$-isocrystal $(M, \Phi)$ over $k$ is the set of rational numbers $\alpha$ such that $M_\alpha\neq 0$. The \textit{multiplicity} of a slope $\alpha$ is defined as the dimension of $M_\alpha$. For a general field $k$, the slopes of an $F$-isocrystal are defined as the slopes of its base change to $\bar{k}$.
\end{defi}

It is useful to present the slopes of an $F$-isocrystal in a graphical way using the Newton polygon.

\begin{defi}[Newton polygon]\label{def:NewtonPolygon}
    Let $(M,\Phi)$ be an $F$-isocrystal over a field $k$ of characteristic $p$. Let $\alpha_1<\alpha_2<\dots<\alpha_m$ be the distinct slopes and let $d_1,\dots, d_m$ be the multiplicities of these slopes. The \emph{Newton polygon} $\mathrm{NP}(M,\Phi)$ is the convex polygonal line in $\mathbb{R}^2$ starting at $(0,0)$ whose consecutive segments have slopes $\alpha_1,\dots,\alpha_m$ and horizontal lengths $d_1,\dots,d_m$. Equivalently, its vertices are the points
    $$
    P_i=\left(\sum_{j=1}^{i} d_j,\;\sum_{j=1}^{i} \alpha_j d_j\right),\qquad i=0,1,\dots,m.
    $$
\end{defi}

\begin{exam}\label{exam:NPEllipticCurves}
    For an elliptic curve $E$ over an algebraically closed field $k$ of characteristic $p$, let $(M,\Phi)$ be the rationalised Dieudonné module $\mathbb{D}(E[p^\infty])[1/p]$. The example discussed in Example \ref{exam:Dieudonne} lifts in the following way.
    
    \begin{itemize}
        \item[--] If $E$ is ordinary, then $M=K\oplus K$ with 
        $$\Phi=\begin{pmatrix} \sigma & 0 \\ 0 & p\sigma \end{pmatrix}.$$ 
        In this case, the slopes of $(M,\Phi)$ are $0$ and $1$.
        
        \spa
        \item[--] If $E$ is supersingular, then $M$ is again of dimension $2$ and there exists a basis such that 
        $$\Phi=\begin{pmatrix} 0 & p\sigma \\ \sigma & 0 \end{pmatrix}.$$ 
        In this case, the slope of $(M,\Phi)$ is $\frac{1}{2}$ with multiplicity $2$. In contrast to the Dieudonné module of $E[p]$, the $F$-isocrystal $(M,\Phi)$ is irreducible.
    \end{itemize} 

    Here are the two possibilities for the Newton polygons.
    $$
    \begin{array}{c@{\hspace{2cm}}c}
    \text{ordinary} & \text{supersingular} \\[1ex]
    \begin{tikzpicture}[scale=0.8]
    \draw[->] (0,0) -- (2.5,0) node[right]{};
    \draw[->] (0,0) -- (0,2) node[above]{};
    \draw[thick] (0,0) -- (1,0) -- (2,1);
    \fill (0,0) circle (2pt) (1,0) circle (2pt) (2,1) circle (2pt);
    \end{tikzpicture}
    &
    \begin{tikzpicture}[scale=0.8]
    \draw[->] (0,0) -- (2.5,0) node[right]{};
    \draw[->] (0,0) -- (0,2) node[above]{};
    \draw[thick] (0,0) -- (2,1);
    \fill (0,0) circle (2pt) (2,1) circle (2pt);
    \end{tikzpicture}
    \end{array}
    $$
    
    When $\calE\to X$ is a family of elliptic curves with connected basis and ordinary generic fibre, there exists a maximal closed subscheme $Z\subseteq X$ where the fibres are supersingular. This is called the \textit{supersingular locus}. The complement of $Z$, denoted by $U$, is the \textit{ordinary locus}. The Newton polygon of $\mathbb{D}(\calE[p^\infty])[1/p]$ is constant on $U$, but it jumps at the points of $Z$. In particular, the Newton polygon is not locally constant in general. The filtration on $\mathbb{D}(\calE[p^\infty])$ we discussed in Example \ref{exam:FamiliesOrdinary} does not extend to the whole $X$. This is a special case of the following fundamental results.
\end{exam}
\begin{theo}[Grothendieck, {\cite{KatzSlope,deJongOortPurity}}]\label{thm:NewtonPolygonSemicontinous}
	Let $X$ be a smooth variety over a perfect field of characteristic $p$, and let $(\mathcal{M}, \Phi)$ be an $F$-isocrystal over $X$. The Newton polygon of $(\mathcal{M}, \Phi)$ is an upper semicontinuous function for the Zariski topology of $X$ with locally constant endpoints. In addition, the locus where the Newton polygon {differs from its value at the generic point}, if non-empty, has pure codimension $1$.
	
\end{theo}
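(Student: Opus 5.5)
The theorem has two parts of different depth: Grothendieck's specialization (upper semicontinuity with locally constant endpoints) and de Jong--Oort purity.

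\emph{Semicontinuity.} The statement is Zariski local, so I would first pass to an open affine $\Spec(A)$ with a finite $p$-basis and a $p$-complete flat lift $\tilde{A}$ carrying a Frobenius lift $\sigma$. By Theorem~\ref{Thm:CrystalsAsConnections}, after scaling $\Phi$ by a power of $p$ I can represent $(\mathcal{M},\Phi)$ by an $F$-crystal: a finite $\tilde{A}$-module $M$, locally free after shrinking, with a matrix $\Phi$ in some basis. The endpoint $(r,\sum\alpha_i d_i)$ of the Newton polygon is the $p$-adic valuation of $\det\Phi$ at each point. The function $v_p(\det\Phi)$ is locally constant because $\det(\mathcal{M},\Phi)$ is a rank-one $F$-isocrystal, and a rank-one $F$-isocrystal has constant slope on a connected base, by the Dieudonné--Manin theorem applied pointwise together with the constancy of the valuation of a unit up to $\tilde{A}^\times$. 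For the other vertices I would use Katz's criterion: the Newton polygon at a point $x$ lies on or above a given polygon $P$ exactly when, for every $i$, the exterior powers $\wedge^i(M,\Phi)$ satisfy the divisibility condition that $\Phi^n$ on $\wedge^i M$ is divisible by $p^{\lceil n\lambda_i\rceil}$ up to a bounded error, where $\lambda_i$ is the height of $P$ at abscissa $i$. Such divisibility conditions are closed conditions, since they say certain matrix entries vanish modulo $p^m$. The locus where the polygon lies above $P$ is therefore an intersection of closed sets, hence closed. Because only finitely many polygons with fixed endpoints and bounded denominators occur, the polygon takes finitely many values, which gives upper semicontinuity.

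\emph{Purity.} Let $\eta$ be the generic point and $Z$ the jump locus. I would argue by contradiction: suppose $Z$ has a component of codimension $\geq 2$. Localizing at its generic point and completing, I may assume $X=\Spec(R)$ with $R=k[[x_1,\dots,x_n]]$, $n\geq 2$, where the polygon is constant on the punctured spectrum $U$ and jumps only at the closed point. The key step is de Jong--Oort's purity. On $U$ the slope filtration exists after an isogeny; this is the constant-Newton-polygon slope filtration, which I would construct by Katz's argument. One then extends the filtration across the closed point, using that finite locally free modules over a regular local ring of dimension $\geq 2$ extend uniquely across codimension $2$ (reflexive hulls, then Auslander--Buchsbaum in dimension $2$, and iterating by cutting with hyperplanes). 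Extending Frobenius compatibly then forces the special fibre to have the same polygon, a contradiction. De Jong--Oort instead prove a direct bound on how the first vertex of the polygon can jump, using a Dieudonné-module normal form over $R$ and reducing to the case $n=2$ via a ``first slope'' argument. I would follow their reduction to $\wedge^i$ and the first slope.

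\emph{Main obstacle.} I expect the hardest step to be showing that the first slope at the closed point equals the generic one. The filtration over $U$ is defined only up to isogeny, with denominators that may not be uniformly bounded. I would try to control this with the Frobenius-fixed lattice argument: over the perfection of $U$, extract a $\Phi$-stable lattice for the first-slope piece and show that it is bounded, so that it extends over the whole of $\Spec(R^{\mathrm{perf}})$.
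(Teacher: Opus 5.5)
The notes give no proof of this statement; they cite Katz for specialization and de Jong--Oort for purity. Your semicontinuity half is Katz's argument and is fine in outline, with two points to make explicit. First, the error term in the divisibility criterion must be uniform in the point; it depends only on the rank and a bound on the Hodge slopes. Without this uniformity the locus is not an intersection of closed sets. Second, you need a locally free lattice near every point, not only on a dense open.

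The genuine gap is purity. The extension principle you invoke is false. On the punctured spectrum of $k[[x,y,z]]$, the kernel of $(x,y,z)\colon\mathcal{O}^3\to\mathcal{O}$ is a rank-two bundle whose module of sections is the second syzygy of $k$. That module is reflexive but not free, so the bundle does not extend, and cutting by hyperplanes does not change this. Slicing down to a surface at the outset avoids $n\geq 3$. Even for $n=2$, however, the objects are not vector bundles on $U$: crystals live over a lift such as $W(k)[[x,y]]$, which is a three-dimensional regular ring, and the slope filtration exists only after inverting $p$. There the same failure recurs: the kernel of $(p,x,y)$ on the formal punctured spectrum has non-free sections. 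Moreover, subbundles extend in general only as non-saturated submodules, e.g.\ the line spanned by $(x,y)$ in $\mathcal{O}^2$. So ``extending Frobenius compatibly then forces the special fibre to have the same polygon'' presupposes exactly what purity asserts. The extension has to be produced by the $F$-structure. Your fallback, a bounded $\Phi$-stable lattice over the perfection, restates this difficulty rather than resolving it.

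One way to supply the missing input is as follows. Write $N_x$ for the polygon at $x$. After removing the other components of $Z$, assume $\operatorname{codim} Z\geq 2$ and work on the smooth variety $V=X\setminus Z$, where the polygon is constant and the slope filtration exists.
\begin{itemize}
\item Take a breakpoint $(d,\nu)$ of the generic polygon, with corresponding step $S\subseteq\mathcal{M}|_V$. Then $\det S\subseteq\wedge^d\mathcal{M}|_V$ has rank one and integral slope $\nu$.
\item Twisted by $p^{-\nu}$, $\det S$ is unit-root. By Crew's equivalence it is a character of $\pi_1(V)$, and $\pi_1(V)=\pi_1(X)$ by Zariski--Nagata purity, so it extends to $X$.
\item The inclusion is then a Frobenius-invariant horizontal section over $V$ of an $F$-isocrystal on $X$, and it extends by Hartogs.
\item A nonzero morphism out of a rank-one isocrystal is a monomorphism onto a local direct summand. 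So it stays nonzero on the fibre at $s\in Z$, and $\nu$ is a slope of $\wedge^d\mathcal{M}_s$, i.e.\ $N_s(d)\leq N_\eta(d)$.
\item With semicontinuity this gives equality at every breakpoint, and convexity then gives $N_s=N_\eta$.
\end{itemize}
Alternatively, Kedlaya's Zariski--Nagata purity for convergent $F$-isocrystals extends the whole slope filtration to $X$. Each graded piece is isoclinic at $\eta$ and stays isoclinic, because the straight segment is the highest convex polygon with fixed endpoints.
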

In the loci where the Newton polygon is constant, we have the following structure theorem for $F$-isocrystals, which can be seen as a generalisation of the Dieudonné--Manin classification to a relative setting.
\begin{theo}[{\cite{KatzSlope},\cite{Ked16}, \cite[Cor. 2.6]{Tsuzuki}}]
    Let $X$ be a smooth variety over a perfect field, and let $(\mathcal{M}, \Phi)$ be an $F$-isocrystal over $X$ with constant Newton polygon. Then there exists a unique filtration
    $$
    0 = S_0(\mathcal{M}) \subseteq S_1(\mathcal{M}) \subseteq \cdots \subseteq S_m(\mathcal{M}) = \mathcal{M}
    $$
    by sub-$F$-isocrystals such that each graded piece $S_i(\mathcal{M})/S_{i-1}(\mathcal{M})$ has pure slope $\alpha_i$ for every {geometric} point of $X$ and $\alpha_1 < \alpha_2 < \cdots < \alpha_m$.
\end{theo}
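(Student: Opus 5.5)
The plan is to build the filtration one step at a time, starting from the smallest slope, and to get uniqueness from a vanishing of morphisms between pure pieces. Uniqueness is the easier half, so I do it first. Let $\mathcal{N}$ and $\mathcal{N}'$ be $F$-isocrystals that are pure of slopes $\alpha$ and $\beta$ at every geometric point, with $\alpha<\beta$. I claim every morphism $\mathcal{N}\to\mathcal{N}'$ vanishes. Its image is a sub-$F$-isocrystal, and one can check the claim fibrewise at geometric points. There the Dieudonné--Manin decomposition shows that a quotient of something pure of slope $\alpha$ is again pure of slope $\alpha$, and a subobject of something pure of slope $\beta$ is pure of slope $\beta$. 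So the image would have to be pure of both slopes, hence zero. Now take two filtrations with the stated property. Compose $S_1$ of the first with $\mathcal{M}\to\mathcal{M}/S'_{j}$ of the second. An induction on the graded pieces then shows that $S_i\subseteq S'_i$, and by symmetry the two filtrations coincide.

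For existence, I reduce by induction on $m$ to constructing $S_1$, the maximal sub-$F$-isocrystal of pure slope $\alpha_1$. The quotient $\mathcal{M}/S_1$ again has constant Newton polygon, namely the old one with its first segment removed, so the induction can continue. I also reduce to the case where $X$ is affine with a $p$-basis. Uniqueness lets local constructions glue in the Zariski topology, and by Theorem \ref{Thm:CrystalsAsConnections} I may then work with a module $M$ over $\tilde{A}[1/p]$ carrying a connection $\nabla$ and a Frobenius $\Phi$ relative to a chosen Frobenius lift. After twisting, and after adjoining a root of $p$ if $\alpha_1$ is not an integer (for instance by passing to $\Phi^r$ and a suitable Tate twist), I can assume $\alpha_1=0$ and that $\mathcal{M}$ is an $F$-crystal with all slopes $\geq 0$.

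Katz's approach then proceeds as follows. Choose a $\Phi$-stable lattice $M_0$. Because the Newton polygon is constant and its first breakpoint $(d_1,0)$ lies on it, the Hodge polygon and the Newton polygon meet at that vertex at every point. By Mazur's inequality together with a Katz--Oort type isogeny argument (replacing $M_0$ by a suitable isogenous lattice), I can arrange that the unit-root part of $M_0$ has rank $d_1$ everywhere. That is, the reduction of $\Phi^{r}$ modulo $p$ has image of constant rank $d_1$, and the stable image of $\Phi^n$ modulo $p$ is locally free of rank $d_1$. One then defines $S_1$ as $\bigcap_n \mathrm{Im}(\Phi^n)$, taken $p$-adically. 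Concretely, one solves a Frobenius fixed-point problem by successive approximation: in a suitable basis, the matrix of $\Phi$ is block triangularized modulo $p^n$ for every $n$, and this converges because $\Phi$ is invertible on the rank-$d_1$ block and topologically nilpotent on the complementary block. The key verification is that this sub-module is stable under $\nabla$. This follows from the compatibility $\nabla\circ\Phi=(\Phi\otimes d\sigma)\circ\nabla$, because $d\sigma$ is divisible by $p$, which pushes the complement deeper at each step.

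I expect the main obstacle to be the constancy of the rank of the unit-root part on an integral lattice. The hypothesis only fixes the Newton polygon rationally at each point, and promoting this to a locally free rank-$d_1$ stable image requires a uniform isogeny bound, as in the purity and Katz--Oort theorems (Theorem \ref{thm:NewtonPolygonSemicontinous}). If this fails directly, my first fallback is to pass to the generic point (or a perfection), build the filtration there, and then show that it extends over all of $X$. To extend it, I would use the fact that the sub-object has constant Newton polygon, together with the codimension-$1$ purity of Theorem \ref{thm:NewtonPolygonSemicontinous}, to exclude poles, and then apply Zariski--Nagata style purity to extend the sub-crystal across codimension $\geq 2$.
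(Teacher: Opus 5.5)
The notes give no proof of this statement; they only cite Katz, Kedlaya and Tsuzuki. Your outline follows Katz's strategy, and your uniqueness half is correct. The existence half, however, has a gap at its crux, and it is not where you locate it.

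\textbf{The missing step.} The unjustified claim is ``I can assume that $\mathcal{M}$ is an $F$-crystal'', i.e.\ the choice of a $\Phi$-stable lattice $M_0$ for the twisted Frobenius $\Psi\coloneqq p^{-s}\Phi^r$. Such a lattice exists if and only if some lattice $L$ satisfies $\Psi^m(L)\subseteq p^{-c}L$ for all $m$, with $c$ independent of $m$. Having all slopes $\geq 0$ gives such a bound at each point separately, but not uniformly over $X$. Producing the uniform bound is exactly Katz's isogeny theorem: divisibility of $\Psi^m$ can be tested at points because $A$ is reduced, and this is combined with a slope estimate depending only on the rank and a Hodge bound. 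You must also make the resulting lattice (the span of all $\Psi^m(L)$) locally free, because your mod-$p$ argument needs $M_0/pM_0$ to be a vector bundle.

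\textbf{The step you flag is automatic.} Take any locally free $\Phi$-stable lattice and $n\geq\mathrm{rk}\,M_0$. The rank of $\bar\Phi^{n}$ at a point $x$ equals the multiplicity of slope $0$ at $x$, by the Fitting decomposition of $\bar\Phi$ over the perfect closure of $\kappa(x)$. Hence this rank is constantly $d_1$. Since $A$ is reduced, the image is then a rank-$d_1$ direct summand and $\bar\Phi$ is nilpotent on the quotient. From there your successive approximation and horizontality arguments go through. No isogeny, no Mazur inequality and no common Hodge--Newton break point is needed at this stage. Indeed $(d_1,0)$ need not be a vertex of the Hodge polygon, so Katz's Newton--Hodge decomposition would not apply as you phrase it.

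\textbf{The fallback does not close the gap.} Extending the generic filtration across a codimension-one point is the same problem over a discrete valuation ring. That is exactly where constancy of the Newton polygon has to be used (compare Dwork's example in \S\ref{sec:OCGm}). The purity statement in Theorem~\ref{thm:NewtonPolygonSemicontinous} says nothing once the Newton polygon is already constant. Only the codimension-$\geq 2$ step, via Zariski--Nagata purity for convergent $F$-isocrystals, is sound.
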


\subsection{Newton stratification for \texorpdfstring{$\mathcal{A}_{g}$}{A-g}}
This section is an interlude on the geometry of the moduli space of abelian varieties in characteristic $p$ and its stratification by Newton polygons. This is a fundamental application of Theorem \ref{thm:NewtonPolygonSemicontinous}. It can be skipped at first reading.

\spa
We denote by $\mathcal{A}_{g}$ the coarse moduli space over $\overline{\mathbb{F}}_p$ of principally polarised abelian varieties of dimension $g$. For a point $x\in\mathcal{A}_{g}({\ovfp})$ corresponding to an abelian variety $A_x$, we consider its $p$-divisible group $A_x[p^\infty]$ and the associated Dieudonné module $\mathbb{D}(A_x[p^\infty])$. After inverting $p$, we obtain an $F$-isocrystal $(M_x,\Phi_x)$ over {$\ovfp$} of rank $2g$. The {Newton polygon} $\mathrm{NP}(A_x)$ of $A_x$ is defined as the Newton polygon of $(M_x,\Phi_x)$ as in Definition \ref{def:NewtonPolygon}. It is a convex polygon with breakpoints at integer coordinates, starting at $(0,0)$ and ending at $(2g,g)$. {Its slopes are symmetric under $\alpha\leftrightarrow1-\alpha$, because a polarisation identifies $A_x$ with its dual up to isogeny.}

\begin{defi}
    Let $\nu$ be a Newton polygon. The \emph{Newton stratum} associated to $\nu$ is the locally closed subvariety $\mathcal{N}_\nu\subseteq \mathcal{A}_{g}$ such that
    $$
    \mathcal{N}_\nu(\ovfp) = \{ x\in\mathcal{A}_{g}(\ovfp) \mid \mathrm{NP}(A_x) = \nu \}.
    $$ The existence of $\mathcal{N}_\nu$ is a consequence of Theorem \ref{thm:NewtonPolygonSemicontinous}.
\end{defi}

\begin{exam}[$g=1$]
    For elliptic curves, there are only two possible Newton polygons: the ordinary one with slopes $0$ and $1$, and the supersingular one with a single slope $1/2$ of multiplicity $2$. The ordinary locus is open and dense in $\mathcal{A}_{1}$, while the supersingular locus consists of a finite set of points.
\end{exam}

\begin{exam}[$g=2$]
    For principally polarised abelian surfaces, there are three possible Newton strata.
    \begin{itemize}
        \item[--] Ordinary: slopes $0,0,1,1$.
        \item[--] Almost ordinary: slopes $0,1/2,1/2,1$.
        \item[--] Supersingular: slopes $1/2,1/2,1/2,1/2$.
        
    \end{itemize}
    The ordinary locus is open and dense; the supersingular locus is closed of dimension $1$ and {is a union of rational curves meeting at \textit{superspecial} points}; the almost ordinary stratum has dimension $2$ and it is smooth and irreducible {\cite[Thm. A]{ChaiElladicmonodromy}}. \end{exam}

\subsection{Overconvergent \texorpdfstring{$F$}{F}-isocrystals}

While $F$-isocrystals should be the analogue of lisse $\ell$-adic sheaves, as we have seen they exhibit some different behaviours (see Example \ref{exam:FamiliesOrdinary} and the discussion after it). Berthelot introduced the notion of \emph{overconvergent $F$-isocrystals}, which have better cohomological properties and have closer ties with $\ell$-adic sheaves. These are defined using the language of rigid analytic geometry. A more algebraic definition of overconvergent $F$-isocrystals is obtained using a variant of the crystalline site called the \textit{marked crystalline site} \cite{Edged}.

\subsubsection{Overconvergent \texorpdfstring{$F$}{F}-isocrystals}
\label{sec:OCGm}

To present this concept, we start again with a concrete example.
Assume that $p>2$ and put
\[
X\coloneqq\mathbb{P}^1_k\setminus\{0,1,\infty\}.
\]
Consider the Legendre family
$f\colon\mathcal{E}\to X$. The scheme $X$ has an evident smooth lift
\[
X_W\coloneqq\mathbb{P}^1_W\setminus\{0,1,\infty\}.
\]
{Indeed, the three boundary points lift to the pairwise disjoint sections $0$, $1$, and $\infty$ of $\mathbb P^1_W$, and removing these sections commutes with reduction modulo $p$.}
Let $\widehat{X}_W$ denote its $p$-adic completion. Its rigid generic
fibre is
\[
\mathfrak{X}
\coloneqq 
\mathbb{P}^{1,\mathrm{an}}_K
 \setminus
 \bigl(
   \mathring{D}(0,1)
   \cup\mathring{D}(1,1)
   \cup\mathring{D}(\infty,1)
 \bigr).
\]

By Theorem \ref{Thm:CrystalsAsConnections}, the $F$-isocrystal $\mathcal{M}
\coloneqq
\mathbb{D}(\mathcal{E}[p^\infty])[1/p]$
defines a connection with Frobenius on $\mathfrak{X}$.

\spa

Let $Z\subseteq X$ be the supersingular locus of the Legendre
family. It is a nonempty finite set, namely the zero locus of the
Hasse invariant of the family. After replacing $k$ by a finite
extension, we may write
\[
Z=\{a_1,\ldots,a_r\}\subseteq X(k).
\]
{Choose lifts $\tilde{a}_i\in W$ of the points $a_i$.} Under the
specialization map, the point $a_i$ corresponds to the open
$p$-adic unit disk
\[
]a_i[
=
\mathring{D}(\tilde{a}_i,1)
=
\{t\in\mathfrak{X}\mid |t-\tilde{a}_i|<1\}.
\]
Consequently, the tube of the supersingular locus is the disjoint
union
\[
\mathfrak{Z}
\coloneqq ]Z[
=
\coprod_{i=1}^r
\mathring{D}(\tilde{a}_i,1).
\]

Let
\[
U\coloneqq X\setminus Z
\]
be the ordinary locus. Its tube is
\[
\mathfrak{U}
\coloneqq ]U[
=
\mathfrak{X}\setminus\mathfrak{Z}.
\]
Thus, in the rigid-analytic space $\mathfrak{X}$, passing from $X$
to its ordinary locus amounts to removing the open unit disks
corresponding to the supersingular points.

\spa

For $0<\rho<1$, with $\rho$ sufficiently close to $1$, consider
\[
\mathfrak{U}_\rho
\coloneqq
\mathfrak{X}\setminus
\coprod_{i=1}^r
\mathring{D}(\tilde{a}_i,\rho).
\]
We then have
\[
\mathfrak{U}\subseteq\mathfrak{U}_\rho\subseteq\mathfrak{X}.
\]
The space $\mathfrak{U}_\rho$ is obtained from $\mathfrak{U}$ by
adjoining, inside each supersingular disk, an annular region
\[
\mathfrak{A}_{i}(\rho,1)
\coloneqq
\{t\mid \rho\leq |t-\tilde{a}_i|<1\}.
\]

Over the ordinary locus $U$, the $F$-isocrystal $\mathcal{M}$
admits a slope $0$ subobject
\[
\mathcal{M}_0
\subseteq
\mathcal{M}|_U.
\]
It defines a connection with Frobenius on $\mathfrak{U}$. A
fundamental observation, due to Dwork, is that $\mathcal{M}_0$
does not extend, as a connection with Frobenius, to any of the
(strict) neighbourhoods $\mathfrak{U}_\rho$, no matter how close
$\rho$ is to $1$. In other words, the slope $0$ part cannot be
analytically continued across even an arbitrarily small annular
region inside the supersingular disks. Notice that the obstruction does not come from the full
$F$-isocrystal $\mathcal{M}$: the latter is defined across the
supersingular locus, on all of $\mathfrak{X}$. It is the slope
decomposition on the ordinary locus that fails to extend across
the points where the Newton polygon jumps. In addition, $\mathcal{M}$ extends to
sufficiently small annuli around $0$, $1$, and $\infty$. Thus
$\mathcal{M}$ has an extra $p$-adic extension property,
which the slope $0$ subobject does not.

\spa

According to Berthelot's definition, an \emph{overconvergent
$F$-isocrystal on $U$ relative to $X$} is represented by a flat
connection with Frobenius on some neighbourhood
$\mathfrak{U}_\rho$ of $\mathfrak{U}$ in $\mathfrak{X}$. Dwork's
observation therefore says that $\mathcal{M}_0$ is an
$F$-isocrystal on $U$ which is not overconvergent relative to
$X$. This phenomenon, which has no direct $\ell$-adic analogue,
is one of the main reasons for distinguishing overconvergent
$F$-isocrystals from arbitrary  $F$-isocrystals.

\spa

In general, for any open immersion $U\subseteq Y$ of varieties,
Berthelot constructed a category of overconvergent
$F$-isocrystals on $U$ relative to $Y$. We denote by $\Foi(X)$
the category of overconvergent $F$-isocrystals with respect to a
compactification $X\hookrightarrow\overline{X}$.
Berthelot proved that this category is independent of the chosen
compactification {\cite{Ber96}}. There is a natural restriction functor
\[
\Foi(X)\hookrightarrow\Fisoc(X),
\]
which is fully faithful {when $X$ is smooth} by \cite{KedFull}. An $F$-isocrystal in
the essential image of this functor is called
\emph{$\dagger$-extendable}. {If $X$ is separated and of finite type,
every overconvergent $F$-isocrystal $\calE$ on $X$ has finite-dimensional
rigid cohomology, both with and without compact supports \cite{Ked06a}.}
Overconvergent $F$-isocrystals satisfy a Lefschetz trace formula {\cite{EtesseLeStum}} and are the correct objects for
the Langlands correspondence \cite{Abe18}.

\subsection{Monodromy groups}
Lisse $\Ql$-sheaves can be described as representations of the \'etale fundamental group. The image of such representation measures the ``size'' of the variation of the sheaf. For $F$-isocrystals we do not have such a representation, but we can use the \textit{Tannakian formalism}. This helps to understand the relation between $F$-isocrystals and overconvergent $F$-isocrystals. 
\begin{defi}
    Let $X$ be as above, and let $\mathcal{M} \in \mathrm{Isoc}(X)$. Denote by $\langle \mathcal{M} \rangle$ the Tannakian subcategory generated by $\mathcal{M}$ (the smallest full subcategory containing $\mathcal{M}$ that is closed under subquotients, direct sums, tensor products, and duals). For a fiber functor coming from a perfect point $x$, put {$K_x=W(\kappa(x))[\tfrac{1}{p}]$}. The \emph{monodromy group} $G(\mathcal{M}, x)$ is the algebraic group scheme over {$K_x$} of tensor automorphisms of the restriction of the fibre functor to $\langle \mathcal{M} \rangle$. By Tannaka duality, we have that $$\langle \mathcal{M} \rangle {\otimes_K K_x}= \mathrm{Rep}_{K_x}(G(\mathcal{M}, x)).$$ The same definition applies to overconvergent isocrystals, giving the monodromy group $G(\mathcal{M}^\dagger, x)$.  If $\mathcal{M}$ is constant (it comes from the base field), then $G(\mathcal{M}, x)$ is trivial. If $\mathcal{M}$ varies non-trivially, then $G(\mathcal{M}, x)$ can be large.

\end{defi}

One of the motivations for studying monodromy groups of overconvergent $F$-isocrystals is the following independence-of-$\ell$ phenomenon.

\begin{theo}[\cite{Monodromy}]\label{thm:Independence}
    Let $f\colon Y\to X$ be a smooth proper morphism of smooth varieties over $\overline{\mathbb{F}}_p$, {let $\ell\ne p$, and assume that $X$ is geometrically connected}. For every $x\in X(\overline{\mathbb{F}}_p)$ and $i\geq 0$ and after choosing an isomorphism $\Qlbar\simeq \overline{K}$, the groups
    $$G(R^if_*\mathbb{Q}_\ell,x)^\circ_{\overline{\mathbb{Q}}_\ell}\quad\text{and}\quad G(R^if_*\mathcal{O}^\dagger,x)_{\overline{K}}^\circ$$
    {are non-canonically isomorphic.}
\end{theo}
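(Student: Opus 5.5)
The plan is to deduce the statement from the theory of companions together with the Tannakian description of monodromy groups, rather than by comparing the cohomologies directly. I would first reduce to the case where $f$, $X$ and $x$ are defined over a finite field $\mathbb{F}_q$. This is possible by spreading out, since $f$ is of finite type over $\ovfp$. Replacing $\mathbb{F}_q$ by a finite extension does not change the neutral components, because it only passes to a finite-index subgroup. The key input is that, over $X_0/\mathbb{F}_q$, the lisse sheaf $R^if_*\mathbb{Q}_\ell$ and the overconvergent $F$-isocrystal $R^if_*\mathcal{O}^\dagger$ are \emph{compatible}: at every closed point $y$ the characteristic polynomials of Frobenius agree, and they have coefficients in $\mathbb{Q}$ independent of $\ell$. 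I would obtain this from the Lefschetz trace formula for rigid cohomology \cite{EtesseLeStum} and its étale counterpart, applied fibrewise, using the Katz--Messing result that the fibrewise characteristic polynomials of crystalline and $\ell$-adic Frobenius agree. Finiteness of rigid cohomology \cite{Ked06a} ensures that the pushforward is an honest overconvergent $F$-isocrystal.

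Next I would pass to semisimplifications. By Theorem~\ref{thm:Semisimplicity} the geometric $\ell$-adic sheaf is already semisimple, and neutral components of monodromy are insensitive to replacing a representation by its semisimplification in the geometric case, since the unipotent radical must vanish. On the $p$-adic side one proves the analogous semisimplicity, or simply semisimplifies, at the cost of checking that the geometric monodromy group does not change. Both objects are then semisimple, compatible, and pure of weight $i$. This uses Deligne's Weil~II and Kedlaya's $p$-adic Weil~II.

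The main step, and the obstacle, is going from equality of Frobenius traces on all tensor constructions to an isomorphism of the neutral components over $\overline{\mathbb{Q}}_\ell\simeq\overline{K}$. I would follow the strategy of Serre and Larsen--Pink. Compatibility of the pair of representations implies compatibility of every tensor construction $\mathcal{M}^{\otimes a}\otimes(\mathcal{M}^\vee)^{\otimes b}$. By Chebotarev density for both categories, the dimension of invariants of the geometric monodromy in each such construction can then be read off from $L$-functions, using the order of the pole at $s=$ weight, which is given by the trace formula. Equality of these dimensions for all tensor constructions, together with reductivity, determines the neutral component up to isomorphism. This is the Larsen--Pink theorem on recovering a connected reductive group from the dimensions of invariants in tensor spaces. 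The delicate points are two. First, one must control $\pi_0$, so that the arithmetic monodromy over $\mathbb{F}_q$ and the geometric one differ by a torus quotient plus finite groups. Second, one must justify Chebotarev and the pole-order computation in the $p$-adic Tannakian setting, where the fibre functor is only defined over $K_x$. If comparing invariant dimensions is not sufficient directly, I would fall back on Abe's Langlands correspondence \cite{Abe18}. It produces $p$-adic companions of irreducible lisse sheaves, which lets one realise both groups inside a single Tannakian category of compatible systems. The isomorphism of neutral components then comes from matching the Frobenius tori at a Zariski-dense set of points.
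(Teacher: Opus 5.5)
The notes give no proof of this statement; they cite \cite{Monodromy}. That argument follows Serre, Larsen--Pink and Chin. It checks that $R^if_*\Ql$ and $R^if_*\calO^\dagger$ are semisimple companions and handles $\pi_0$ by Serre's argument. It gets the neutral components by extending the Serre--Chin theorem on Frobenius tori to overconvergent $F$-isocrystals and combining it with the existence of companions.

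Your outline has several of the right ingredients: compatibility, purity and semisimplicity, and invariant dimensions read off from $L$-functions. But the step that produces an isomorphism $G(R^if_*\Ql,x)^\circ\simeq G(R^if_*\calO^\dagger,x)^\circ$ is not supplied. You assert that equal invariant dimensions in all tensor constructions, together with reductivity, determine a connected reductive group up to isomorphism, and attribute this to Larsen--Pink. Their reconstruction from invariant dimensions needs an irreducibility hypothesis that you never arrange. $R^if_*\Ql$ is typically reducible, and even an irreducible representation can split on $G^\circ$. Getting around this is where Chin needed Lafforgue's theorem (to split the system into compatible irreducible pieces) together with Frobenius tori. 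It is also where \cite{Monodromy} needs the $p$-adic analogues: Abe's companions and a Frobenius-tori theorem for overconvergent $F$-isocrystals.

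Your fallback does not close the gap. Matching Frobenius tori at a dense set of closed points is itself a theorem to be proved on the $p$-adic side. Even granted, it only identifies a maximal torus with its weights on $\calM_x$, i.e.\ the formal character. That does not determine the group: the diagonal torus of $\GL_n$ and $\GL_n$ itself have the same formal character. Nor is there an a priori ``single Tannakian category of compatible systems'' with fibre functors to both sides; such a common group is not available without essentially proving the theorem first.

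Two further points need repair.
\begin{enumerate}
\item $L$-functions compute invariants of the full geometric monodromy groups. Before comparing neutral components, you must show that both groups of components are cut out by the same finite \'etale cover of $X$. Your description of the $\pi_0$ issue (arithmetic versus geometric groups differing by a torus and finite groups) is not this statement.
\item Semisimplifying on the $p$-adic side changes the geometric monodromy group unless you already know that $R^if_*\calO^\dagger$ is geometrically semisimple. That is a $p$-adic analogue of Deligne's theorem for pure objects and must be supplied as an input.
\end{enumerate}
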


This theorem allows us to transfer knowledge about $\ell$-adic monodromy to the $p$-adic setting. In the next lecture, we will see how the parabolicity conjecture can be combined with Theorem \ref{thm:Independence} to determine the monodromy groups of $\mathbb{D}(\calA[p^\infty])$ for any abelian scheme $\calA\to X$ with constant Newton polygon.

\section{Parabolicity conjecture and applications}
\label{Sec:ParabolicityConjecture}
\subsection{The statement}

Let $X$ be a smooth {geometrically connected} variety over a perfect field $k$, and let $(\calM^\dagger,\Phi)$ be an overconvergent $F$-isocrystal over $X$. Denote by $(\calM,\Phi)$ the associated $F$-isocrystal (the image under the restriction functor). Assume that $(\calM,\Phi)$ has constant Newton polygon, so it admits a slope filtration $S_\bullet(\calM)$. Fix a perfect point $x$ of $X$. Recall that we have two monodromy groups: the overconvergent monodromy group $G(\calM^\dagger, x)$ and the monodromy group $G(\calM, x)$. 
There is a natural inclusion $$G(\calM, x) \subseteq  G(\calM^\dagger, x).$$

\begin{theo}[Parabolicity conjecture, {\cite{Parabolicity}}]\label{thm:Parabolicity}
		The inclusion $G(\calM, x) \subseteq  G(\calM^\dagger, x)$ identifies $G(\calM, x)$ with the stabiliser of the slope filtration of $\calM_{x}$. Moreover, if $\calM^\dagger$ is semi-simple, then $G(\calM, x)$ is a parabolic subgroup of $G(\calM^\dagger, x)$.
	\end{theo}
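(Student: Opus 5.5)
The plan is to prove the two inclusions separately and then handle the parabolicity statement. Write $P\subseteq G(\calM^\dagger,x)$ for the stabiliser of the slope filtration $S_\bullet(\calM)_x$ of $\calM_x$. Since each $S_i(\calM)$ is a sub-$F$-isocrystal of $\calM$, Tannakian formalism shows that $G(\calM,x)$ stabilises the lines $\det S_i(\calM)_x$ and hence the flags. Therefore $G(\calM,x)\subseteq P$. This inclusion is formal.

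The content is the reverse inclusion $P\subseteq G(\calM,x)$. I will use Chevalley's theorem: $G(\calM,x)$ is the stabiliser, inside $G(\calM^\dagger,x)$, of some line $L$ in a representation $W$ of $G(\calM^\dagger,x)$, i.e. in some object $\calW^\dagger\in\langle\calM^\dagger\rangle$. Such a line is a rank-one sub-$F$-isocrystal $\mathcal{L}\subseteq \calW$ of the non-overconvergent restriction. So the statement reduces to the following claim: for every $\calW^\dagger\in\langle\calM^\dagger\rangle$, every sub-$F$-isocrystal of $\calW$ is stabilised by $P$.

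To prove the claim, I introduce the \emph{$\dagger$-hull}. For a subobject $\mathcal{N}\subseteq \calW$, let $\mathcal{N}^\dagger$ be the smallest overconvergent subobject of $\calW^\dagger$ whose restriction contains $\mathcal{N}$. The idea is to show that $\mathcal{N}$ is obtained from $\dagger$-extendable pieces and the slope filtrations of objects in $\langle\calM\rangle$; all of these are $P$-stable. Note that $\calW$ also has constant Newton polygon, since tensor constructions preserve this. The key step, and the main obstacle, is the following. If $\mathcal{N}$ has constant Newton polygon inside $\calW$, then $\mathcal{N}^\dagger/\mathcal{N}$ and the position of $\mathcal{N}$ are controlled by the slope filtration. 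Concretely, one shows that the minimal slope subobject $S_1(\mathcal{N}^\dagger)$ is contained in $\mathcal{N}$, and one argues by induction on rank using quotients by it.

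My first approach to this step is to use Kedlaya's results on slope filtrations over the Robba ring, that is, generic or local $\dagger$-extendability. A rank-one unit-root sub-$F$-isocrystal that is not overconvergent should contradict minimality of the hull, via a Dwork-type argument as in \S\ref{sec:OCGm}. To make this work I may need to replace $\mathcal{N}$ by $\det\mathcal{N}$ and twist by a constant $F$-isocrystal so that the slope becomes $0$. With the slope $0$ case in hand, one shows that unit-root lines in $\calW$ lie in $\dagger$-extendable parts plus slope filtration pieces.

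For the second assertion, assume $\calM^\dagger$ is semisimple. Then $G(\calM^\dagger,x)^\circ$ is reductive. I take a cocharacter defined by the slope decomposition of $\calM_{\bar x}$ from Dieudonné--Manin (after extending scalars). This is a Newton cocharacter $\lambda$, and it lands in $G(\calM^\dagger,x)$ because the Frobenius at $x$ lies in the monodromy group. The stabiliser $P$ of the filtration it defines equals the parabolic $P_\lambda=\{g:\lim_{t\to0}\lambda(t)g\lambda(t)^{-1}\text{ exists}\}$. This identification requires checking that the slope filtration is the weight filtration of $\lambda$, which holds by construction. Hence $G(\calM,x)=P$ is parabolic.
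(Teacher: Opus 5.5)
Your skeleton is the paper's own: a Chevalley line, its $\dagger$-hull, and the key lemma $S_1(\mathcal N)=S_1(\overline{\mathcal N})$, which the paper also treats as a black box (Theorem~\ref{theo:MS}). However, two steps do not go through as written.

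\textbf{First gap: the Chevalley line need not carry a Frobenius.} $G(\calM,x)$ and $G(\calM^\dagger,x)$ are monodromy groups of the underlying \emph{isocrystals}. So the line that Chevalley produces corresponds only to a rank-one sub-isocrystal of $\mathcal W$. It need not be Frobenius-stable and has no slope, so neither the $\dagger$-hull argument nor Theorem~\ref{theo:MS} applies to it. For the same reason, ``the Frobenius at $x$ lies in the monodromy group'' has no meaning over a general perfect field, where $\Phi_x$ is only $\sigma$-semilinear.

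The missing idea is the paper's passage to arithmetic groups. Work over $k=\mathbb F_{p^n}$ with a rational point $x$, and replace $\Phi$ by $\Phi^n$. Evaluation at $x$ is then a $\mathbb Q_{p^n}$-linear fibre functor on $F^n$-isocrystals, and you run Chevalley for $G\subseteq G^\dagger$. Now lines are sub-$F^n$-isocrystals, and the linear Frobenius at $x$ is an honest element of $G^\dagger$.

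\textbf{Second gap: you never derive $P$-stability from the key lemma.} The mechanisms you suggest do not work. Quotienting by $S_1$ of the hull leaves $\langle\calM^\dagger\rangle$, since that piece is typically not $\dagger$-extendable (think of the unit-root part in Dwork's example). A non-$\dagger$-extendable line does not contradict minimality of its hull; the content of the key lemma is that the hull only adds slopes $>s$. Twisting to slope $0$ is unnecessary.

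Since Chevalley already gives rank one, the missing step is a single intersection. If $\mathcal L$ has slope $s$, Theorem~\ref{theo:MS} gives $S_1(\overline{\mathcal L})=\mathcal L$. Hence every slope of $\overline{\mathcal L}$ is $\ge s$, and
\[
\mathcal L_x=\overline{\mathcal L}_x\cap(\mathcal W_x)^{\le s}.
\]
The first factor is $G^\dagger$-stable because $\overline{\mathcal L}$ is $\dagger$-extendable. The second is stable under $P$ because the slope filtration is a tensor filtration of the fibre functor. Hence $P$ stabilises $\mathcal L_x$, that is, $P\subseteq G$.

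With both repairs, your Newton-cocharacter argument for parabolicity, carried out in the reductive group $G^\dagger$, is fine.
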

	
	\begin{exam}
		In the situation of Example \ref{exam:FamiliesOrdinary}, {assuming that the family is non-isotrivial and has full geometric monodromy}, if $\calM\coloneqq \mathbb{D}(\calE[p^\infty])[\tfrac{1}{p}]=R^1f_*\mathcal{O}_{\mathrm{cris}}[\tfrac{1}{p}]$ and $\calM^\dagger\coloneqq R^1f_{*} \mathcal{O}^\dagger$, then the containment $$G(\calM,x)\subseteq G(\calM^\dagger,x)$$ corresponds to $$ B_2\subseteq \mathrm{SL}_2,$$ where $B_2$ is the Borel subgroup of $\mathrm{SL}_2$ consisting of upper triangular matrices. The tautological representation of $\mathrm{SL}_2$ is irreducible, while $B_2$ admits a non-trivial subrepresentation of rank $1$ which does not have any retraction. Thus the knowledge of the monodromy groups in particular recovers the fact that $\calM^\dagger$ is irreducible (see \S \ref{sec:OCGm}), while $\calM$ admits a non-trivial filtration. 
	\end{exam}

	Theorem \ref{thm:Parabolicity} should be seen as a ``big monodromy'' result for $F$-isocrystals. It can be used, for example, to prove the semi-simplicity of the graded pieces of the slope filtration, as we will see in the next section.
\subsubsection{Monodromy of the graded object}
		In the situation of Theorem \ref{thm:Parabolicity}, suppose that $\calM^\dagger$ is semi-simple. Then $G(\calM,x)\subseteq G(\calM^\dagger,x)$ is a parabolic subgroup. We can consider the Levi decomposition of 
		\begin{equation}\label{eq:Levi}
G(\calM,x)=U\rtimes L
		\end{equation} associated to the slope decomposition of $x$. In other words, $L$ is the subgroup of $G(\calM,x)$ that respects the slope decomposition of $\calM_x$, while $U$ is the unipotent radical of $G(\calM,x)$. If $\calN$ is the graded object associated to the slope filtration of $\calM$, then $L$ is the monodromy group of $\calN$. In particular, since $L$ is reductive, then $\calN$ is a semi-simple isocrystal. This proves the following corollary of Theorem \ref{thm:Parabolicity}, which is a $p$-adic analogue of Theorem \ref{thm:Semisimplicity}.
{The slope grading is a tensor grading of the fibre functor, so its cocharacter lies in $G(\calM^\dagger,x)$; this is what identifies $L$ as a Levi subgroup.}

		\begin{coro}
			Let $X$ be a smooth {geometrically connected} variety over a perfect field $k$, and let $(\calM^\dagger,\Phi)$ be a semi-simple overconvergent $F$-isocrystal over $X$ with constant Newton polygon. For every $i\geq 0$, the quotients $S_i(\calM)/S_{i-1}(\calM)$ of the slope filtration of $\calM$ are semi-simple $F$-isocrystals. {In particular, this applies to $R^1f_*\mathcal O^\dagger$ for an abelian scheme $f\colon Y\to X$ with constant Newton polygon by \cite[Theorem 5.1.6]{Parabolicity}.}
		\end{coro}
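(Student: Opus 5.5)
The plan is to deduce the corollary from Theorem~\ref{thm:Parabolicity} by Tannakian arguments, following the discussion just before the statement. Fix a perfect point $x$ of $X$ and write $G\coloneqq G(\calM,x)\subseteq G^\dagger\coloneqq G(\calM^\dagger,x)$. Since $\calM^\dagger$ is semi-simple and $\langle\calM^\dagger\rangle$ is a Tannakian category in characteristic $0$, the group $G^\dagger$ is reductive (its neutral component is reductive). By Theorem~\ref{thm:Parabolicity}, $G$ is the stabiliser in $G^\dagger$ of the slope filtration $S_\bullet(\calM)_x$ of $\calM_x$, and it is a parabolic subgroup.

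The first step is to produce a cocharacter of $G^\dagger$ splitting the filtration. After passing to $\overline{\kappa(x)}$, Dieudonné--Manin gives a slope decomposition $\calM_x=\bigoplus_\alpha \calM_{x,\alpha}$. This decomposition is compatible with tensor products, duals and subquotients in $\langle \calM\rangle$. It therefore defines a tensor grading of the fibre functor, that is, a (fractional) cocharacter $\lambda$ of $G$, hence of $G^\dagger$, defined after a finite extension of $K_x$ and clearing denominators. The filtration $S_\bullet(\calM)_x$ is the filtration associated to $\lambda$, so $G=P(\lambda)$ is the parabolic attached to $\lambda$. Standard reductive group theory then gives the Levi decomposition $G=U\rtimes L$ as in \eqref{eq:Levi}, where $L=Z_{G^\dagger}(\lambda)$ is reductive and $U$ is the unipotent radical.

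The second step is to identify $L$ with the monodromy group of $\calN\coloneqq\bigoplus_i S_i(\calM)/S_{i-1}(\calM)$. The functor $\mathrm{gr}$ from $\langle\calM\rangle$ to $\langle\calN\rangle$ is an exact tensor functor, because the slope filtration is functorial and strictly compatible with morphisms. It therefore induces a closed immersion $G(\calN,x)\hookrightarrow G$, whose image lies in the subgroup preserving the grading, namely $L$. For the reverse inclusion, the objects of $\langle \calN\rangle$ include the $\mathrm{gr}$ of every object of $\langle\calM\rangle$, so the $L$-invariants of these objects are exactly their $G(\calN,x)$-invariants. I expect this equality $G(\calN,x)=L$ to be the main obstacle, since it requires showing that $\mathrm{gr}$ does not lose monodromy. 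My first approach is to show that each $S_i(\calM)/S_{i-1}(\calM)$ is itself a subquotient of an object of $\langle\calM\rangle$, and then apply Chevalley's theorem: a line stabilised by $G(\calN,x)$ in some tensor construction is, after grading, stabilised by $U\rtimes G(\calN,x)$, and hence by $L$.

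Once $G(\calN,x)=L$ is established, the conclusion follows. $L$ is reductive, so $\mathrm{Rep}(L)$ is semi-simple in characteristic $0$. By Tannaka duality, $\langle\calN\rangle\otimes K_x$ is then semi-simple, and in particular every summand $S_i(\calM)/S_{i-1}(\calM)$ is semi-simple as an isocrystal. Semi-simplicity as an $F$-isocrystal follows from the fact that the Frobenius permutes the isotypic components of the socle. More directly, if $\calN'\subseteq\calN$ is an $F$-stable subobject, the isocrystal complement can be averaged into an $F$-stable one, using the uniqueness of the isotypic decomposition.

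For the final sentence of the statement, I would only invoke the cited \cite[Theorem 5.1.6]{Parabolicity}. That result gives the semi-simplicity of $R^1f_*\mathcal O^\dagger$ for abelian schemes, presumably via Theorem~\ref{thm:Independence} and Deligne's Theorem~\ref{thm:Semisimplicity}. With that in hand, the case of an abelian scheme with constant Newton polygon is a special case of what was proved above.
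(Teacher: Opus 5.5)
Your outline is the one sketched before the statement: parabolicity, then a Levi decomposition along the slope grading, then the Levi as the monodromy of $\calN$, then reductive implies semi-simple. However, the two steps where you add detail do not work as written.

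\textbf{The second step.} $\langle\calM\rangle$ is a category of isocrystals. Its objects are subquotients of tensor constructions in $\mathrm{Isoc}(X)$, and in general they are not $\Phi$-stable, so they have no slope filtration.
\begin{itemize}
\item There is therefore no functor $\mathrm{gr}\colon\langle\calM\rangle\to\langle\calN\rangle$.
\item The slope decomposition of $\calM_x$ is not compatible with all subquotients in $\langle\calM\rangle$. In the elliptic example, $\det\calM\simeq\mathbf 1$ in $\mathrm{Isoc}(X)$, but $\det\calM_x$ has slope $1$. Correspondingly, the slope cocharacter $\lambda(t)=\mathrm{diag}(1,t)$ is not in $\mathrm{SL}_2=G(\calM^\dagger,x)$.
\end{itemize}
So your closed immersion $G(\calN,x)\hookrightarrow G$ is unavailable. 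The Chevalley argument for the reverse inclusion is also not needed.

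Run the step in the other direction. Each $S_i(\calM)$ is a subobject of $\calM$ in $\mathrm{Isoc}(X)$, so $\calN\in\langle\calM\rangle$. Hence $G(\calN,x)$ is the \emph{image} of $G$ acting on $\mathrm{gr}\,\calM_x$.

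About $\lambda$, you only need that it normalises $G^\dagger$. It does, because it comes from the Frobenius at $x$: at an $\F_{p^n}$-rational point it lies in the Zariski closure of the group generated by the linear map $\Phi_x^n$, and $\Phi_x^n$ permutes the fibres of sub-isocrystals of tensor constructions.

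Let $P\subseteq\GL(\calM_x)$ be the stabiliser of $S_\bullet(\calM)_x$ and $U_P$ its unipotent radical. Parabolicity, together with the dynamic description inside $G^\dagger\cdot\lambda(\mathbb G_m)$, gives $G=G^\dagger\cap P=Z_{G^\dagger}(\lambda)\ltimes(G^\dagger\cap U_P)$, with $Z_{G^\dagger}(\lambda)$ reductive. Since $U_P$ acts trivially on the graded pieces, $G(\calN,x)$ is a quotient of $Z_{G^\dagger}(\lambda)$, hence reductive. No reverse inclusion is needed.

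\textbf{The last step.} Semi-simplicity as an isocrystal does not imply semi-simplicity as an $F$-isocrystal. Over a geometrically connected $X/\F_p$, take $\calO_X^{\oplus 2}$ with $\Phi$ given by $\bigl(\begin{smallmatrix}1&1\\0&1\end{smallmatrix}\bigr)$. It is trivial as an isocrystal. But a $\Phi$-stable complement of $\calO_X e_1$ would be spanned by a constant vector $re_1+e_2$ with $r-\sigma(r)=1$ in $\mathbb Q_p$, which is impossible.

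Frobenius acts on the multiplicity spaces through an infinite cyclic group, so there is nothing finite to average over. The canonicity of isotypic components says nothing about complements inside a single isotypic component.

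To reach the statement as formulated, keep the Frobenius in the groups, as in the finite-field sketch. Pass to $F^n$ and a rational point, and use the arithmetic groups $G\subseteq G^\dagger$.
\begin{itemize}
\item $G^\dagger$ is reductive, because $\calM^\dagger$ is semi-simple as an $F$-isocrystal, hence also as an $F^n$-isocrystal.
\item The slope grading is a genuine tensor grading of the fibre functor, so $\lambda\in G$.
\item The image argument above then shows that the arithmetic monodromy group of $\calN$ is reductive, so $\calN$ is semi-simple as an $F^n$-isocrystal.
\end{itemize}
Only the final passage from $F^n$ to $F$ is an averaging argument, and it is over the finite group $\Z/n$. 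This is also the setting in which the remark before the corollary, that the slope cocharacter lies in $G(\calM^\dagger,x)$, holds literally.
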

\subsection{\texorpdfstring{$\dagger$}{Dagger}-hulls and the slope filtration}
	We will not try to give a complete proof of Theorem \ref{thm:Parabolicity} in these notes, but we will explain in detail the role of \textit{$\dagger$-hulls} of sub-$F$-isocrystals.
\begin{defi}Let $(\calM,\Phi)$ be a $\dagger$-extendable $F$-isocrystal over $X$ and let $(\mathcal{N},\Phi)$ be a sub-$F$-isocrystal of $(\calM,\Phi)$. The \textit{$\dagger$-hull} of  $(\mathcal{N},\Phi)\subseteq (\calM,\Phi)$ is the smallest $\dagger$-extendable subobject of $(\mathcal{M},\Phi)$ containing $(\mathcal{N},\Phi)$. It is denoted by $(\overline{\mathcal{N}},\Phi)$.
\end{defi}

The reason for this notion is the following key result.

\begin{theo}[{{\cite[Proposition 6.1]{Tsu19}} for curves, {\cite[Theorem 4.1.3]{Parabolicity}} in general}]\label{theo:MS}
Let $(\mathcal{N},\Phi) \subseteq (\mathcal{M},\Phi)$ be an inclusion of $F$-isocrystals over $X$. If $(\mathcal{M},\Phi)$ {is $\dagger$-extendable and} has constant Newton polygon, then $S_1(\mathcal{N}) = S_1(\overline{\mathcal{N}})$.
\end{theo}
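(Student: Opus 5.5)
The plan is to prove the two inclusions separately.

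\emph{Easy inclusion.} Since $\mathcal{N}\subseteq\overline{\mathcal{N}}\subseteq\mathcal{M}$ and $\mathcal{M}$ has constant Newton polygon, both $\mathcal{N}$ and $\overline{\mathcal{N}}$ inherit slope filtrations, at least after restricting to a dense open where their Newton polygons are constant. Uniqueness of slope filtrations, together with the fact that $S_1$ of a subobject is its intersection with the appropriate slope step of the ambient object, gives $S_1(\mathcal{N})\subseteq S_1(\overline{\mathcal{N}})$. Here $S_1(\overline{\mathcal{N}})$ is the first step, of slope $\alpha$, of $\overline{\mathcal N}$. If the minimal slope of $\mathcal{N}$ is strictly larger than that of $\overline{\mathcal{N}}$, one must show this cannot happen; that case is treated below. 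Constancy of Newton polygons of subobjects is delicate, so I would first reduce to $X$ a smooth curve by a Bertini/Lefschetz-type argument. I would also use full faithfulness of $\Foi(X)\to\Fisoc(X)$ \cite{KedFull} to check that the $\dagger$-hull is compatible with restriction to dense opens.

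\emph{Main step: the reverse inclusion.} Let $\alpha$ be the minimal slope of $\overline{\mathcal{N}}$. The aim is to show $S_1(\overline{\mathcal{N}})\subseteq\mathcal{N}$, with the minimal slope of $\mathcal{N}$ also equal to $\alpha$. Consider the subobject $\mathcal{N}':=\mathcal{N}+S_1(\overline{\mathcal{N}})$; it suffices to show $\mathcal{N}'$ itself can be replaced by something smaller. The natural strategy is to prove that the quotient $\overline{\mathcal{N}}/\mathcal{N}$ has all slopes $>\alpha$. Then $S_1(\overline{\mathcal{N}})$ maps to zero in it, since there are no nonzero maps from pure slope $\alpha$ to an object with slopes $>\alpha$, and hence lies in $\mathcal{N}$.

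To get this slope bound, suppose $\overline{\mathcal{N}}/\mathcal{N}$ had a slope-$\alpha$ piece. Then I would construct a strictly smaller $\dagger$-extendable subobject containing $\mathcal{N}$, contradicting minimality. The tool would be a Kedlaya/de Jong-type result: a morphism from a $\dagger$-extendable object to the minimal-slope part, or dually quotients of minimal slope of overconvergent objects, is controlled by overconvergent data. Concretely, after twisting by a rank-one constant object to make $\alpha=0$, the unit-root part corresponds to a $p$-adic representation of $\pi_1$. Overconvergence then forces the corresponding representation to have finite local monodromy, in the sense of Tsuzuki. Using Tsuzuki's theorem that unit-root overconvergent $F$-isocrystals correspond to representations with finite local monodromy \cite{Tsu19}, I would show that the slope-$0$ part of $\overline{\mathcal{N}}/\mathcal{N}$ gives rise to an overconvergent quotient of $\overline{\mathcal{N}}$. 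Its kernel would then be a smaller $\dagger$-extendable object containing $\mathcal{N}$.

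\emph{Expected obstacle.} The main obstacle is this last step: producing overconvergence of the kernel from the slope information, that is, a Dwork-type phenomenon run in reverse. I would first carry it out for curves via Tsuzuki's local analysis at the boundary, using Kedlaya's full faithfulness to glue. I would then extend to higher-dimensional $X$ by checking $\dagger$-extendability on curves, using a cut-by-curves criterion for overconvergence (Shiho's or Kedlaya's), which is where \cite[Theorem 4.1.3]{Parabolicity} should do its real work.
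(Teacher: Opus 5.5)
Your reformulation is correct. If $\alpha$ is the minimal slope of $\overline{\mathcal N}$, the theorem is equivalent to $\alpha$ not occurring as a slope of $\mathcal Q:=\overline{\mathcal N}/\mathcal N$, because slope multiplicities are additive and $S_1(\mathcal N)=\mathcal N\cap S_1(\overline{\mathcal N})$ once the minimal slopes agree. Two side remarks. Your ``easy inclusion'' is not independent of the main step, since it presupposes that the minimal slopes agree. And no shrinking of $X$ is needed: subquotients of an $F$-isocrystal with constant Newton polygon automatically have constant Newton polygon, because both polygons are upper semicontinuous and their slopes add up to a constant multiset.

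\textbf{The gap in the main step.} The main step is the entire content of the theorem, and the mechanism you propose for it does not work.
\begin{enumerate}
\item The slope-$\alpha$ part of $\mathcal Q$ is $S_1(\mathcal Q)$, a \emph{sub}object of $\mathcal Q$. To shrink $\overline{\mathcal N}$ you need a nonzero $\dagger$-extendable \emph{quotient} of $\mathcal Q$, and nothing in your sketch produces one.
\item Tsuzuki's finite local monodromy theorem cannot supply one. It only recognises which unit-root $F$-isocrystals are overconvergent; it does not create overconvergence. The unit-root objects available here are subquotients of $\mathcal Q$, and $\mathcal Q$ is never $\dagger$-extendable when $\mathcal N\neq\overline{\mathcal N}$: otherwise $\mathcal N=\ker(\overline{\mathcal N}\to\mathcal Q)$ would be $\dagger$-extendable, by full faithfulness \cite{KedFull} and exactness of restriction. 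Dwork's example in \S\ref{sec:OCGm} is exactly a minimal-slope piece of an overconvergent object that is not overconvergent.
\item Test your plan on the case $\mathcal M^\dagger$ irreducible and $0\neq\mathcal N\subsetneq S_1(\mathcal M)$. Then $\overline{\mathcal N}=\mathcal M$, and the statement says that $S_1(\mathcal M)$ is irreducible. Already this special case requires an input your argument does not provide.
\item \cite[Proposition 6.1]{Tsu19} is precisely the curve case of the statement you are proving, not the finite local monodromy theorem, which is an older result of Tsuzuki. So your curve case either rests on a misattributed result or amounts to quoting the theorem.
\end{enumerate}

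\textbf{The passage to higher dimension.} Appealing to \cite[Theorem 4.1.3]{Parabolicity} is circular, since that is the statement itself. A cut-by-curves criterion for overconvergence also misses the real difficulty. For a curve $C\to X$ one only has $\overline{\mathcal N|_C}\subseteq\overline{\mathcal N}|_C$, and the inclusion can be strict, for instance if $\mathcal M^\dagger|_C$ becomes decomposable or constant. In that case the minimal slope of $\overline{\mathcal N|_C}$ can exceed $\alpha$, and the curve case says nothing about $S_1(\overline{\mathcal N})$.

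What the reduction actually needs is a Lefschetz-type statement: curves $C$ along which the overconvergent monodromy of $\mathcal M^\dagger$ does not drop. For such $C$, every $\dagger$-extendable subobject of $\mathcal M|_C$ is the restriction of one of $\mathcal M$. A rank comparison then gives $\overline{\mathcal N|_C}=\overline{\mathcal N}|_C$. The equality $S_1(\mathcal N)|_C=S_1(\overline{\mathcal N})|_C$ from the curve case spreads back to $X$ by comparing ranks again.
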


Assuming Theorem \ref{theo:MS}, we can give a sketch of the proof of Theorem \ref{thm:Parabolicity} when $k$ is a finite field.

\subsubsection{Proof of Theorem \ref{thm:Parabolicity} over finite fields assuming Theorem \ref{theo:MS}}
Suppose that $k=\mathbb F_{p^n}$. After a finite extension of $k$, we may choose a rational point $x\in X(k)$; we also replace the Frobenius structure by its $n$-th power. Evaluation at $x$ on the underlying isocrystal then gives a $\mathbb Q_{p^n}$-linear fibre functor on the category of $F^n$-isocrystals generated by $(\calM,\Phi)$, and similarly in the overconvergent category. We can therefore take the Tannakian duals while keeping track of the Frobenius structure. We denote by $G$ and $G^\dag$ the two arithmetic monodromy groups we obtain. They are arithmetic variants of $G(\calM,x)$ and $G(\calM^\dag,x)$ and are better behaved for this problem because they also contain information about the slopes. The aim is to prove that the subgroup $G\subseteq G^\dagger$ is the stabiliser of the slope filtration. Since the slope filtration of $\calM_x$ extends to $(\calM,\Phi)$, we already know that $G$ is contained in $\mathrm{Stab}(S_\bullet)\subseteq G^\dagger$. It remains to show the opposite inclusion.

\spa

By Chevalley’s theorem, there exists an object $(\mathcal{N}^\dagger,\Phi)$ in the Tannakian category generated by $(\mathcal{M}^\dagger,\Phi)$ and a rank 1 subobject $(\mathcal{L},\Phi) \subseteq (\mathcal{N},\Phi)$ such that $G$ is the stabiliser of the line $\mathcal{L}_x \subseteq \mathcal{N}_x$. Since $(\mathcal{N},\Phi)$ is $\dag$-extendable, we can take the $\dagger$-hull of $(\mathcal{L},\Phi)$ in $(\mathcal{N},\Phi)$, denoted by $(\overline{\mathcal{L}},\Phi)$.  Its fibre $
\overline{\mathcal{L}}_x \subseteq \mathcal{N}_x
$
is a subspace containing $\mathcal{L}_x$.

\spa

The $F$-isocrystal $(\mathcal{L},\Phi)$ is of rank 1, thus it admits a unique slope; call it $s$.  On the fibre $\calN_x$ we have the slope filtration (it exists because $(\mathcal{N},\Phi)$ lies in the Tannakian category generated by $(\mathcal{M},\Phi)$).  Write $(\calN_x)^{\le s}$ for the subspace consisting of all vectors whose slope is $\le s$. This is one of the pieces of the slope filtration.

\spa

Since $S_1(\mathcal{L}) = \mathcal{L}$, Theorem \ref{theo:MS} tells us that $(\overline{\mathcal{L}},\Phi)$ has no subobjects of slope $< s$, and its part of slope  $s$ is precisely $\mathcal{L}$.  Passing to fibres, this yields the equality
$$
\overline{\calL}_x \cap (\mathcal{N}_x)^{\le s} = \calL_x.
$$

On the one hand, $\mathrm{Stab}(S_\bullet)$ stabilises $\overline{\calL}_x$ because $\overline{\calL}$ is $\dagger$-extendable, thus by Tannakian duality it is stabilised by $G^\dagger$. On the other hand, {the slope filtration is a tensor filtration of the fibre functor. Hence every element of $G^\dagger$ stabilising $S_\bullet(\calM_x)$ also stabilises the induced piece $(\mathcal{N}_x)^{\le s}$.} We deduce that $\mathrm{Stab}(S_\bullet)$ stabilises  $\calL_x$.  But $G$ is precisely the stabiliser of $\calL_x$, thus $\mathrm{Stab}(S_\bullet)$ is contained in $G$, as we wanted.

\subsection{The local enhancement}
	
A refined version of the parabolicity conjecture deals with the local monodromy at a closed point. Let $x \in X$ be a closed point, and let $X^{/x}$ denote the affine scheme $$\spec\left(\varprojlim_n\calO_{X,x}/\mathfrak{m}^n\right).$$ The scheme $X^{/x}$ is an irreducible \textit{Frobenius-smooth}\footnote{{An $\Fp$-scheme is \textit{Frobenius-smooth} if its absolute Frobenius is syntomic or, equivalently, if  it is Zariski locally the spectrum of a ring with a finite $p$-basis.}} Noetherian scheme and $\mathrm{Isoc}(X^{/x})$ is again a Tannakian category \cite[Cor. 3.3.3]{DrinfeldCrystals}. Thus, restricting an isocrystal $\calM$ to $X^{/x}$ produces a local monodromy group $G(\calM^{/x}, x)$. With van Hoften we proved the following fundamental result for this local monodromy group.
	
\begin{theo}[{\cite{DvH22}}]\label{thm:LocalEnhancement}
		{Assume that $X$ is irreducible and that $x\in X(k)$ is a rational point.} Let $(\calM^\dagger,\Phi)$ be a semi-simple overconvergent $F$-isocrystal with constant Newton polygon. Then the local monodromy group $G(\calM^{/x}, x)$ is equal to the unipotent radical of the global monodromy group $G(\calM, x)$.
	\end{theo}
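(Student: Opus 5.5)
We prove the two inclusions $G(\calM^{/x},x)\subseteq U$ and $U\subseteq G(\calM^{/x},x)$ separately, working inside the Levi decomposition \eqref{eq:Levi} of $G(\calM,x)=U\rtimes L$ supplied by Theorem \ref{thm:Parabolicity}. Restriction along $X^{/x}\to X$ is a tensor functor $\langle\calM\rangle\to\langle\calM^{/x}\rangle$, and every object of the target is a subquotient of a restricted one. Tannaka duality therefore gives a closed immersion $G(\calM^{/x},x)\hookrightarrow G(\calM,x)$, and it suffices to compare this subgroup with $U$.

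\textbf{Inclusion $G(\calM^{/x},x)\subseteq U$.} By the discussion after Theorem \ref{thm:Parabolicity}, $L$ is the monodromy group of the graded object $\calN=\bigoplus_i S_i(\calM)/S_{i-1}(\calM)$. So it suffices to show that $\calN|_{X^{/x}}$ is trivial, i.e.\ a direct sum of copies of the unit object in $\mathrm{Isoc}(X^{/x})$. Each graded piece is isoclinic of some slope $s=a/b$. After a finite extension of $k$ and a Tate twist, and replacing $\Phi$ by $\Phi^b$, it becomes unit-root. A unit-root $F$-isocrystal on a Frobenius-smooth base corresponds, via Katz's correspondence, to a $p$-adic representation of the étale fundamental group. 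Since $X^{/x}$ is the spectrum of a complete local ring with residue field $k$, and $x$ is rational, its étale fundamental group is that of $k$, which contributes only the Frobenius action. Hence the underlying isocrystal is constant on $X^{/x}$. We conclude that $\calN|_{X^{/x}}$ is trivial, so $G(\calM^{/x},x)$ maps to $1$ in $L$ and is contained in $U$.

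\textbf{Inclusion $U\subseteq G(\calM^{/x},x)$.} This is the hard part. By Chevalley's theorem, choose an object $\calP$ of $\langle\calM\rangle$ and a line $\ell\subseteq\calP_x$ whose stabiliser in $G(\calM,x)$ is $G(\calM^{/x},x)$. The line is then cut out by a rank-one subobject of $\calP|_{X^{/x}}$, which we may take with Frobenius structure by enlarging the Tannakian category to $F$-isocrystals as in the proof over finite fields. The goal is to show that $U$ fixes $\ell$. Equivalently, we show that $G(\calM^{/x},x)$ is normal in $G(\calM,x)$ and that the quotient $G(\calM,x)/G(\calM^{/x},x)$ is reductive; since $U$ is the unipotent radical, this forces $U\subseteq G(\calM^{/x},x)$.

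For normality, we use that $G(\calM^{/x},x)$ is a unipotent subgroup defined using the formal neighbourhood. Concretely, we argue that every $F$-subisocrystal of an object of $\langle\calM\rangle$ restricted to $X^{/x}$ spreads out, its $\dagger$-hull being controlled by Theorem \ref{theo:MS}, in a way compatible with the slope filtration.

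For reductivity of the quotient, we check that objects of $\langle\calM\rangle$ which become trivial on $X^{/x}$ are semi-simple. Such objects have constant slope filtration near $x$, and the key input is that an extension $0\to S_1\to\calE\to\calE/S_1\to 0$ that splits on $X^{/x}$ splits globally. We prove this by a rigidity argument: the extension class lies in $H^1$ of a suitable isoclinic $F$-isocrystal of negative slope. Frobenius contracts in this negative-slope situation, so the class is determined by its image in the completed local ring at $x$, and that image vanishes. Combined with the parabolicity statement, namely that $L$ is reductive and that semi-simplicity holds by the corollary above, this yields reductivity of the quotient.

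I expect this last rigidity step, injectivity of global-to-formal-local restriction on $\mathrm{Ext}^1$ between slope graded pieces, to be the main obstacle. I would first try it for $p$-divisible groups via Serre–Tate coordinates, where the local extension class is explicit, and then reduce the general case to it through Theorem \ref{thm:Independence} and the structure of $\langle\calM\rangle$.
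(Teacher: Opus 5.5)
The notes do not prove this theorem. They only cite \cite{DvH22} and remark that it relies on Theorem \ref{thm:Parabolicity}, so I am judging your argument on its own merits. Your first inclusion $G(\calM^{/x},x)\subseteq U$ is correct. The isoclinic graded pieces become constant on $X^{/x}$, because they are unit-root after twisting and $\pi_1(X^{/x})=\pi_1(\Spec k)$. Hence the local group lies in the kernel of $G(\calM,x)\to G(\calN,x)=L$, which is $U$.

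\textbf{The normality step fails.} This is the main gap in the reverse inclusion. You justify normality by claiming that every $F$-subisocrystal over $X^{/x}$ of an object of $\langle\calM\rangle$ spreads out to $X$. That claim is false.
\begin{itemize}
\item Take the Hecke-orbit example of the notes at an ordinary point with $g\geq 2$. There $S_1(\calM)$ has rank $g$ and is irreducible over $X$, since the Levi $L\simeq\GL_g$ acts on it by the standard representation.
\item Over $X^{/x}$, with $k=\ovfp$, the same object becomes the trivial unit-root $F$-isocrystal of rank $g$. Its rank-one $F$-subobjects are not restrictions of any subobject over $X$.
\item Conceptually, subobjects over $X^{/x}$ correspond to subspaces stable under $G(\calM^{/x},x)\subseteq U$. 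These are far more numerous than the $G(\calM,x)$-stable ones, precisely because $L$ becomes trivial locally.
\end{itemize}
Theorem \ref{theo:MS} does not rescue this. It concerns $\dagger$-hulls of sub-$F$-isocrystals defined over $X$, and there is no overconvergence on $X^{/x}$. Unipotence of $G(\calM^{/x},x)$ does not give normality either.

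Normality also cannot be bypassed. Showing that the objects of $\langle\calM\rangle$ which become trivial on $X^{/x}$ are semi-simple only proves that the \emph{normal closure} of $G(\calM^{/x},x)$ is $U$. That does not give equality: a line in $\mathbb{G}_a^2\subseteq\mathbb{G}_a^2\rtimes\GL_2$ has normal closure $\mathbb{G}_a^2$. What you actually need is a local-to-global statement equivalent to normality: for every $\mathcal{V}\in\langle\calM\rangle$, the maximal constant subobject of $\mathcal{V}|_{X^{/x}}$ is the restriction of a subobject of $\mathcal{V}$ over $X$. Your proposal gives no argument for this.

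\textbf{The reductivity step is unproved.} Even granting normality, this step rests on injectivity of $\mathrm{Ext}^1$ from $X$ to $X^{/x}$ for extensions of a higher-slope piece by a lower-slope one. ``Frobenius contracts'' is a heuristic, not a proof. Nothing in the proposal explains how a splitting over the formal neighbourhood of a single point propagates to all of $X$.

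Your fallback does not work in the generality of the statement. Theorem \ref{thm:Independence} concerns overconvergent monodromy of $R^if_*$ for smooth proper families. It says nothing about the non-overconvergent $\calM$ restricted to $X^{/x}$. Moreover, $\calM^\dagger$ here is an arbitrary semi-simple overconvergent $F$-isocrystal, not necessarily coming from a $p$-divisible group, so Serre--Tate coordinates are not available.

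As it stands, the proposal establishes only $G(\calM^{/x},x)\subseteq U$.
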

	
	This might be surprising because in the $\ell$-adic setting, if the base field is algebraically closed, the local monodromy group at a point is trivial, since the étale fundamental group of a formal disk is trivial. In the $p$-adic setting, however, the local monodromy can be very large! This is a reflection of the fact that crystals have interesting deformations, as we have seen in the discussion about the deformation theory of $p$-divisible groups. Theorem \ref{thm:LocalEnhancement}, which is proved using Theorem \ref{thm:Parabolicity} can be thought of as a generalisation of the parabolicity conjecture. Indeed, it says that the unipotent group appearing in the Levi decomposition \eqref{eq:Levi} is realised as the local monodromy group at a closed point. Theorem \ref{thm:Parabolicity} and Theorem \ref{thm:LocalEnhancement} are some of the key ingredients in the proof of the \textit{Hecke orbit conjecture}.
\subsection{The Hecke orbit conjecture} 

Let $g$ be a positive integer, and $\mathcal{A}_g$ the moduli space of principally polarised abelian varieties of dimension $g$ over $\overline{\F}_p$. For a point $x=(A_x,\lambda)\in\mathcal{A}_g(\overline{\F}_p)$, the \textit{Hecke orbit} $\mathcal{H}(x)$ is the set of points $y\in\mathcal{A}_g(\overline{\F}_p)$ such that there exists an isogeny $A_x\to A_y$ compatible with polarisations up to a scalar. 
		
\begin{theo}[{\cite{ChaiOortBook}}, {\cite{DvH22}}]
		\label{Thm:HeckeOrbit}
		For each $x\in \mathcal{A}_g(\overline{\F}_p),$ the Hecke orbit $\mathcal{H}(x)$ is dense in the Newton stratum containing $x$. The same holds for Shimura subvarieties (e.g., subvarieties defined by an extra constraint on the endomorphisms of $(A_x,\lambda)$) of $\mathcal{A}_g$ at odd primes of good reduction.
	\end{theo}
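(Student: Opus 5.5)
We reduce the Hecke orbit conjecture to two density statements, following the Chai--Oort strategy. Fix $x\in\mathcal{A}_g(\ovfp)$ with Newton polygon $\nu$. Let $Z$ be an irreducible component of the Zariski closure of $\mathcal{H}(x)$ inside $\mathcal{N}_\nu$ that contains $x$. Hecke correspondences preserve $\mathcal{H}(x)$, so the closure is stable under them. It is also stable under the $\ell$-adic Hecke orbit $\mathcal{H}_\ell(x)$ for every $\ell\neq p$, and under $p$-power isogenies.

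\textbf{Step 1: $\ell$-adic density on each component.} We show that $\mathcal{H}_\ell(x)$ meets every irreducible component of the central leaf $C(x)$ through $x$ in a dense set. The key input is that the $\ell$-adic monodromy of the universal family restricted to $Z$ is big, namely $\mathrm{Sp}_{2g}$. This follows from Hecke stability by Chai's argument, or, on a smooth locus, from the irreducibility results for Newton strata combined with Theorem~\ref{thm:Independence}. Big monodromy then implies that the prime-to-$p$ Hecke orbit is dense in the leaf.

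\textbf{Step 2: local linearity at a point.} This is the \emph{continuous} part and the main obstacle. We want to show that $Z^{/x}$ is a formal subtorus, or more generally a ``strongly Tate-linear'' formal subscheme, of the formal completion of the central leaf at $x$. The latter carries Chai's cascade structure of iterated $p$-divisible groups; in the ordinary case it is a Serre--Tate formal torus.

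The approach is as follows. Restrict $\calM=\mathbb{D}(\calA[p^\infty])[1/p]$ to the smooth locus of $Z\cap C(x)$, where the Newton polygon is constant. The universal $R^1f_*\mathcal{O}^\dagger$ is semisimple (cf.\ \cite[Theorem 5.1.6]{Parabolicity}). Theorem~\ref{thm:Parabolicity} then identifies $G(\calM,x)$ with a parabolic subgroup, and Theorem~\ref{thm:LocalEnhancement} identifies $G(\calM^{/x},x)$ with its unipotent radical $U$. On the other hand, Tannakian arguments in the spirit of \S\ref{Sec:ParabolicityConjecture} show the following: a formal subscheme of the cascade is a strongly Tate-linear formal subscheme exactly when it is cut out by the sub-Dieudonné-crystal data that are invariant under its local monodromy. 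One checks that the local monodromy of the cascade restricted to $Z^{/x}$ has Lie algebra equal to the tangent of $Z^{/x}$ under the Serre--Tate-type coordinates. Step 1 gives monodromy $\mathrm{Sp}_{2g}$, so $U$ is the full unipotent radical for the symplectic group, which forces $Z^{/x}$ to be the whole leaf completion. What I would try first is the ordinary case, verified explicitly via Serre--Tate coordinates, where $U$ is the vector group $\mathrm{Sym}^2$. The general case then proceeds by dévissage along the slope filtration.

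\textbf{Step 3: from leaves to strata.} Combining Steps 1 and 2 shows that $Z$ contains the central leaf $C(x)$ through $x$. We then use the almost product structure $\mathcal{N}_\nu\sim C\times I$, with $I$ an isogeny leaf, together with Oort's result that $p$-isogeny orbits are dense in isogeny leaves, to conclude that $\mathcal{H}(x)$ is dense in $\mathcal{N}_\nu$. For Shimura varieties at odd primes of good reduction we run the same argument with the group $G$ of the Shimura datum replacing $\mathrm{Sp}_{2g}$; there the monodromy input in Step 2 is exactly what the local enhancement supplies.
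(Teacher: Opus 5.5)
Your overall architecture matches the paper's: Chai's big $\ell$-adic monodromy, then parabolicity and the local enhancement, then comparison of local monodromy with that of the leaf. But there is a genuine gap in Step~2.

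\textbf{The missing ingredient is the Chai--Oort rigidity (linearisation) theorem, and the monodromy argument cannot replace it.} Linearity of $Z^{/x}$ comes from Hecke-stability, not from monodromy. Rigidity uses the stability of $Z$ under the $\ell$-adic Hecke correspondences to show that its formal germ at $x$ is strongly Tate-linear, i.e.\ described by a Dieudonn\'e--Lie subalgebra.

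You instead treat linearity as the goal. You try to reach it through the claim that the local monodromy of $Z^{/x}$ has Lie algebra equal to the tangent of $Z^{/x}$ in Serre--Tate coordinates. That claim is false for non-linear germs. Take $g=2$ ordinary: the leaf completion is $\widehat{\mathbb{G}}_m^3$ and $U\simeq\mathrm{Sym}^2$. On the formal curve $(q_{11},q_{12},q_{22})=(1+t,1+t^2,1+t^3)$ the extension classes are $d\log(1+t)$, $d\log(1+t^2)$, $d\log(1+t^3)$. For $p>3$, no nontrivial $K$-linear combination of $\log(1+t),\log(1+t^2),\log(1+t^3)$ has bounded denominators; compare the coefficients of $t^{p^k}$, $t^{2p^k}$, $t^{3p^k}$. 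So this one-dimensional germ already has local monodromy all of $U$, and ``local monodromy equals $U$'' cannot by itself force $Z^{/x}=\mathcal{C}(x)^{/x}$.

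The paper proceeds in a definite order:
\begin{enumerate}
\item Rigidity comes first; this is where Hecke-stability of the $\ell$-adic orbit closure enters.
\item For strongly Tate-linear germs, local monodromy determines the germ. The paper obtains this comparison through Cartier--Witt stacks.
\item Only then does equality of the local monodromy groups of $Z_\ell$ and $\mathcal{C}(x)$ at $x$ (both the unipotent radical, by Theorems~\ref{thm:Parabolicity} and~\ref{thm:LocalEnhancement}) give equality of germs, hence of dimensions.
\end{enumerate}

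\textbf{Step~1 overclaims.} Big $\ell$-adic monodromy, which Chai proves for Hecke-stable subvarieties not contained in the supersingular locus, yields only discrete information, such as irreducibility of non-supersingular leaves. It does not give density of the orbit in the leaf; if it did, your Step~2 would be superfluous.

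Your alternative route to $\mathrm{Sp}_{2g}$, via irreducibility of Newton strata plus Theorem~\ref{thm:Independence}, also fails. The monodromy of a subvariety is not bounded below by that of the ambient stratum, and Theorem~\ref{thm:Independence} compares $\ell$-adic and $p$-adic monodromy over the same base.

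\textbf{The $\ell$-adic result must be transported to the $p$-adic side.} In the paper, Chai's theorem is an input to Step~2, and Theorem~\ref{thm:Independence} together with the polarisation is used to get $G(\calM^\dagger,x)^\circ=\mathrm{Sp}(\calM_x,\lambda)$. You skip this when you pass from ``Step~1 gives $\mathrm{Sp}_{2g}$'' to ``$U$ is the full unipotent radical''. Theorem~\ref{thm:Parabolicity} only describes $G(\calM,x)$ inside $G(\calM^\dagger,x)$, so you must know $G(\calM^\dagger,x)$ first.

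Finally, the monodromy argument requires $x$ to be non-supersingular. Supersingular leaves are finite, and that case is handled by $p$-power isogenies alone.
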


For this question one notices immediately that there is an asymmetry between $p$-primary isogenies and prime-to-$p$ isogenies. Since the group schemes $A_x[p^n]$ are not étale, the $p$-primary isogenies can deform in families, while the prime-to-$p$ isogenies are rigid. For $\mathcal{A}_g$, one can reduce to the case of $\ell$-primary isogenies, and then the problem becomes to show that the Zariski closure is a \textit{central leaf}. The idea is to use the monodromy results to prove that this Zariski closure is big as well.

\subsubsection{Central leaves}

Within a given Newton stratum $\mathcal{N}_\nu$, the isomorphism class of the $p$-divisible group $A_x[p^\infty]$ can vary. On the other hand, $\ell$-primary isogenies preserve the isomorphism class of the $p$-divisible group. To capture this finer information, Chai and Oort introduced the notion of central leaves.

\begin{defi}[Central leaf]
Let $x \in \mathcal{A}_g(\overline{\mathbb{F}}_p)$ be a point corresponding to a principally polarised abelian variety $(A_x,\lambda_x)$. The \textit{central leaf} $\mathcal{C}(x)$ through $x$ is the closed subscheme of $\calN_\nu$ parametrising points {$y=(A_y,\lambda_y)$} in $\mathcal{A}_g(\overline{\mathbb{F}}_p)$ such that the polarised $p$-divisible groups $(A_x[p^\infty],\lambda_x)$ and $(A_y[p^\infty],\lambda_y)$ are isomorphic.
\end{defi}

Central leaves are smooth locally closed subvarieties of $\mathcal{A}_g$ and provide a foliation of each Newton stratum: the Newton stratum $\mathcal{N}_\nu$ is a disjoint union of central leaves {\cite{OortFoliation}}.

\subsection{Big monodromy of Hecke orbits}
Let $Z_\ell$ be the Zariski closure {inside the Newton stratum $\mathcal N_\nu$} of an $\ell$-primary Hecke orbit of a {non-supersingular} point $x \in \mathcal{A}_g(\overline{\mathbb{F}}_p)$. Since a universal abelian scheme does not exist over the coarse moduli space, we pass to a sufficiently fine prime-to-$p$ level cover. After this passage and after replacing $Z_\ell$ by a smooth connected dense open $Z_\ell^\circ$, let $f\colon \mathcal{A} \to Z_\ell^\circ$ be the universal abelian scheme. A deep result of Chai shows that the $\ell$-adic monodromy group of $R^1f_*\Ql$ over $Z_\ell^\circ$ is as large as possible, i.e. it is the full symplectic group $\mathrm{Sp}_{2g}$ {\cite[Prop. 4.1 and Prop. 4.4]{ChaiHeckeMonodromy}}. We transfer this information to the $p$-adic setting using all the results we have presented.

\spa

Let $\mathcal{M} \coloneqq \mathbb{D}(\mathcal{A}[p^\infty])[1/p]$ be the universal $F$-isocrystal and $\mathcal{M}^\dagger = R^1f_*\mathcal{O}^\dagger$ the universal overconvergent $F$-isocrystal over {$Z_\ell^\circ$}. By the independence-of-$\ell$ result (Theorem \ref{thm:Independence}), {$G(\mathcal{M}^\dagger,x)^\circ$ is abstractly isomorphic to $\mathrm{Sp}_{2g}$. The polarisation embeds it into $\mathrm{Sp}(\mathcal M_x,\lambda)\simeq\mathrm{Sp}_{2g}$; equality follows by connectedness and dimension.} The Newton polygon of the abelian scheme $\calA\to {Z_\ell^\circ}$ is constant, thus $\mathcal{M}$ admits a slope filtration and Theorem \ref{thm:Parabolicity} forces $G(\mathcal{M},x)$ to be the parabolic subgroup of $G(\mathcal{M}^\dagger,x)$ associated to the slope filtration. In addition, Theorem \ref{thm:LocalEnhancement} implies that the local monodromy {$G(\mathcal{M}^{/x},x)$} is the unipotent radical of that parabolic subgroup. The same argument shows that by replacing $Z_\ell$ with $\mathcal{C}(x)$, the local monodromy group of $\mathcal{M}$ over $\mathcal{C}(x)$ coincides with the one of $Z_\ell$.

\spa

The end of the proof of Theorem \ref{Thm:HeckeOrbit}, which is beyond the scope of these lectures, consists in translating the information about the local monodromy group into a statement about the dimension of the formal deformation space of $Z_\ell$ at $x$. This is done using a fundamental linearisation theorem by Chai and Oort, called the \textit{rigidity theorem}, which says that $Z_\ell$ can be described using a certain semi-linear algebra object: a \textit{Dieudonné-Lie algebra}  \cite{ChaiOortRigidity}. In order to relate Chai--Oort linearisation with our monodromy result, we use the theory of \textit{Cartier--Witt stacks,} developed by Drinfeld and Bhatt--Lurie \cite{BhattLurieII,DrinfeldPrismatization, DrinfeldCrystals}.

\subsection{Further reading}
For more details about the proofs of the theorems presented here, you can look at \cite{DA24} or the quoted articles. For further applications of the parabolicity conjecture and the Hecke orbit conjecture see for example \cite{PerfectPoints, BraggYang, J23, J25, LS25}.

	\begin{appendix}
\section{Isocrystals over formal schemes}
\label{App:IsocrystalsFormalSchemes}

\subsection{Setting}
Let $A$ be a Noetherian $\F_p$-algebra and $I\subseteq A$ an ideal such that $A$ is $I$-adically complete. Let $\mathfrak{X}$ be the formal scheme $\Spf(A)$, let $X$ be $\Spec(A)$, and let $X_n$ be $\Spec(A/I^{n})$ {for $n\geq1$}.  We write $\mathrm{Crys}(\mathfrak{X})$ for the category of systems $(\mathcal{M}_n)_{{n\geq1}}$ of {finitely presented} crystals over $X_n$ endowed with isomorphisms $(\mathcal{M}_{n+1})|_{X_n}\iso \mathcal{M}_n$. The category of \emph{isocrystals} over $\mathfrak{X}$ is the isogeny category $\mathrm{Isoc}(\mathfrak{X})\coloneqq {\mathrm{Crys}(\mathfrak{X})\otimes_{\Zp}\Qp}$.

\spa

The main objective of this project is to study the $I$-adic completion functor $$\mathrm{Isoc}(X)\to \mathrm{Isoc}(\mathfrak{X}).$$ 

\subsection{Problems}

\begin{prob}
	Suppose that $A$ admits a finite $p$-basis. Characterise $\mathrm{Crys}(\mathfrak{X})$ in terms of modules with {topologically quasi-nilpotent flat} connection over a flat lift $\tilde{A}$ over $\Zp$ of $A$ endowed with the $\tilde{I}$-adic topology, where $\tilde{I}$ is the preimage of $I$ in $\tilde{A}$. Construct crystals in $\mathrm{Crys}(\mathfrak{X})$ which do not come from crystals over $X$.
\end{prob}

\begin{prob}
	Suppose that $X$ is irreducible and Frobenius-smooth. Prove that $\mathrm{Isoc}(\mathfrak{X})$ is a Tannakian category over $\mathrm{Frac}(W(\kappa))$ where {$\kappa\coloneqq\bigcap_{n\geq0}A^{p^n}$ is the perfect field of constants of $A$}. This generalizes \cite[Cor. 3.3.3]{DrinfeldCrystals} to the formal setting.
\end{prob}

\begin{prob}
	For an isocrystal $\mathcal{M}\in \mathrm{Isoc}(X)$, study the relation between the monodromy group of $\mathcal{M}$ and the monodromy group of its image in $\mathrm{Isoc}(\mathfrak{X})$. If $\mathcal{M}$ {is the underlying isocrystal of a crystalline Dieudonné module}, show that the two groups coincide using \cite[Prop. 2.4.8]{deJ95}. Can one generalize this result to more general isocrystals? Are there isocrystals $\mathcal{M}\in \mathrm{Isoc}(\mathfrak{X})$ such that $F^*\mathcal{M}\simeq \mathcal{M}$, but $\mathcal{M}$ does not come from an isocrystal over $X$?
\end{prob}

\begin{prob}
	Develop a stacky approach to crystals and crystalline cohomology over $\mathfrak{X}$. In particular, extend \cite[Thm. 3.5.6]{DrinfeldCrystals} to $\mathfrak{X}$.
\end{prob}

\section{Monodromy groups beyond constant slopes}

\subsection{Setting}
Let $(A,\lambda)$ be a polarised abelian variety over $\ovfp$ of dimension $g$ and $(\mathcal{G},\lambda)$ its associated polarised $p$-divisible group. {Let $R$ be the complete local ring representing equicharacteristic deformations of $(\mathcal G,\lambda)$ and put $Z\coloneqq\Spec(R)$; its formal completion $\Spf(R)$ is the polarised deformation space and is non-canonically isomorphic to $\Spf(\ovfp[[x_1,\ldots,x_{g(g+1)/2}]])$. By the polarised Serre--Tate theorem, it also represents deformations of $(A,\lambda)$.} Denote its closed point by $z\in Z$. The aim of this project is to study the monodromy group of the universal isocrystal $\calM$ over $Z$. By \cite[Cor. 3.3.5]{OrdinaryHO} and \cite[Thm. II]{DvH22}, we know that the monodromy group of $\calM$ restricted to $\mathrm{Def}_\mathrm{sus}(\mathcal{G},\lambda)$\footnote{{This is the sustained deformation space in the sense of Chai--Oort: it is the largest locus on which the universal polarised $p$-divisible group is geometrically fibrewise constant.}} is the unipotent radical of the parabolic subgroup of $\mathrm{Sp}(\calM_z,\lambda)$ defined by the slope filtration. The goal of this project is to understand the monodromy group of $\calM$ over the entire scheme $Z$.

\subsection{Problems}
Let $\mathrm{Aut}(\mathcal{G},\lambda)$ be the abstract automorphism group of $(\mathcal{G},\lambda)$.

\begin{prob}
	Define an action of $\mathrm{Aut}(\mathcal{G},\lambda)$ on $G(\calM,z)$ using the universality of the deformation space.
\end{prob}

\begin{prob}
	Let $\eta$ be the generic point of $Z$ and let $\bar{\eta}$ be a point over $\eta$ associated to an algebraic closure of the field of fractions of $Z$. Prove that $G(\calM_{\eta_\mr{sep}},\bar{\eta})$ coincides with the unipotent radical of the parabolic subgroup of $\mathrm{Sp}(\calM_{\bar{\eta}},\lambda)$ defined by the (ordinary) slope filtration of $\calM_{\bar{\eta}}$. More generally, if $Z'$ is the closure of a central leaf in $Z$, compute $G(\calM_{\eta'_{\mr{sep}}},\bar{\eta}')$, where $\eta'$ is the generic point of $Z'$. This is an extension of \cite[Thm. II]{DvH22}.
\end{prob}

\begin{prob}[Extra]
	With the notation as in the previous problem, what can be said about the monodromy group $G(\calM_{\eta'},\bar{\eta}')$?
\end{prob}

\begin{prob}
	Suppose $(\mathcal{G},\lambda)$ supersingular. In this case, $(\mathcal{G},\lambda)$ has a large automorphism group. Use {Problem B.2.1 and Problem B.2.2} to prove that $G(\calM,z)$ is as large as possible, i.e. $$G(\calM,z)=\mathrm{Sp}(\calM_z,\lambda).$$ Start with the superspecial case if necessary.
\end{prob}

\begin{prob}
	Go beyond the supersingular case. Compute the monodromy groups when $(\mathcal{G},\lambda)$ is of height $4$ and slopes $0,1/2,1/2,1$ or when $(\mathcal{G},\lambda)$ is of height $6$. Try to formulate a general conjecture. In particular, is $G(\calM,z)$ connected?
\end{prob}

\end{appendix}

\end{document}